\theoremstyle{plain}
\newtheorem{theorem}{Theorem}[section]
\newtheorem{proposition}[theorem]{Proposition}
\newtheorem{lemma}[theorem]{Lemma}
\newtheorem{corollary}[theorem]{Corollary}
\theoremstyle{definition}
\newtheorem{definition}[theorem]{Definition}
\newtheorem{remark}[theorem]{Remark}
\newtheorem{example}[theorem]{Example}
\theoremstyle{remark}
\renewenvironment{thebibliography}[1]{%
\begin{oldthebibliography}{#1}%
\setlength{\baselineskip}{.9em}
\linespread{1}
\small
\setlength{\parskip}{0.3ex}%
\setlength{\itemsep}{.5em}%
}%
{%
\end{oldthebibliography}%
}
\newcommand{\eps}{\varepsilon}
\newcommand{\N}{\mathbb{N}}
\newcommand{\R}{\mathbb{R}}
\newcommand{\cD}{\mathcal{D}}
\newcommand{\cF}{\mathcal{F}}
\newcommand{\cI}{\mathcal{I}}
\newcommand{\cM}{\mathcal{M}}
\newcommand{\cP}{\mathcal{P}}
\newcommand{\cU}{\mathcal{U}}
\newcommand{\bC}{\mathbf{C}}
\newcommand{\bI}{\mathbf{I}}
\newcommand{\bS}{\mathbf{S}}
\DeclareMathOperator{\USA}{USA}
\DeclareMathOperator{\supp}{supp}
\DeclareMathOperator{\Var}{Var}
\DeclareMathOperator{\conv}{conv}
\newcommand{\as}{\mbox{-a.s.}}
\newcommand{\qs}{\mbox{-q.s.}}
\newcommand{\1}{\mathbf{1}}
\numberwithin{equation}{section}
\begin{document}

\title{\vspace{-0em}
Complete Duality for\\Martingale Optimal Transport on the Line
\date{\today}
\author{
  Mathias Beiglb\"ock%
  \thanks{University of Vienna, Department of Mathematics, mathias.beiglboeck@univie.ac.at. Financial support by FWF Grants P26736 and Y782-N25 is gratefully acknowledged.}
  \and
  Marcel Nutz%
  \thanks{
  Columbia University, Departments of Statistics and Mathematics, mnutz@columbia.edu. Financial support by NSF Grants DMS-1512900 and DMS-1208985 is gratefully acknowledged.
  }
  \and
  Nizar Touzi%
  \thanks{Ecole Polytechnique Paris, CMAP, nizar.touzi@polytechnique.edu. This work benefits from the financial support of the ERC Advanced Grant 321111, the ANR grant ISOTACE, and the Chairs \emph{Financial Risk} and \emph{Finance and Sustainable Development}.\newline \indent The authors thank Florian Stebegg and an anonymous referee for helpful comments.
  }
 }
}
\maketitle \vspace{-1.2em}

\begin{abstract}
 We study the optimal transport between two probability measures on the real line, where the transport plans are laws of one-step martingales. A quasi-sure formulation of the dual problem is introduced and shown to yield a complete duality theory for general marginals and measurable reward (cost) functions: absence of a duality gap and existence of dual optimizers. Both properties are shown to fail in the classical formulation. As a consequence of the duality result, we obtain a general principle of cyclical monotonicity describing the geometry of optimal transports.

\end{abstract}

\vspace{0.9em}

{\small
\noindent \emph{Keywords} Martingale Optimal Transport; Kantorovich Duality

\noindent \emph{AMS 2010 Subject Classification}
60G42; %
49N05 %
}

\section{Introduction}\label{se:intro}

Let $\mu,\nu$ be probability measures on the real line $\R$. A Monge--Kantorovich transport from $\mu$ to $\nu$ is a probability $P$ on $\R^{2}$ whose marginals are $\mu$ and~$\nu$, respectively; that is, if $(X,Y)$ is the identity map on $\R^{2}$, then $\mu=P\circ X^{-1}$ is the distribution of $X$ under $P$ and similarly $\nu=P\circ Y^{-1}$. The set of all these transports is denoted by $\Pi(\mu,\nu)$. Let $P\in\Pi(\mu,\nu)$ and consider the disintegration $P=\mu\otimes \kappa$. If the stochastic kernel $\kappa(x,dy)\equiv P[\,\cdot\,|X=x]$ is given by the Dirac mass $\delta_{T(x)}$ for a map $T:\R\to\R$, then $T$ is called the corresponding Monge transport. In general, a Monge--Kantorovich transport may be interpreted as a randomized Monge transport. 

Let $f$ be a (measurable) real  function on $\R^{2}$; then the cumulative reward for transporting $\mu$ to $\nu$ according to $P$ is
$$
  P(f) \equiv E^{P}[f(X,Y)] \equiv \int_{\R^{2}} f(x,y) \,P(dx,dy)
$$
and the Monge--Kantorovich optimal transport problem is given by
\begin{equation}\label{eq:introOT}
  \sup_{P\in\Pi(\mu,\nu)} P(f).
\end{equation}
In an alternate interpretation, the negative of $f$ is seen as a cost and the above is the minimization of the cumulative cost. One advantage of the Monge--Kantorovich formulation is that an optimizer $P\in\Pi(\mu,\nu)$ exists as soon as $f$ is upper semicontinuous and sufficiently integrable (of course, existence may fail when $f$ is merely measurable).
Optimal transport has been a very active field in the last several decades; we refer to Villani's monographs~\cite{Villani.03,Villani.09} or the lecture notes by Ambrosio and Gigli~\cite{AmbrosioGigli.13} for background.

In the so-called \emph{martingale optimal transport} problem, we only consider transports which are martingale laws; then $\mu$ can be seen as the distribution of a martingale at time $t=0$ and $\nu$ as the distribution of the process at $t=1$. This problem was introduced by Beiglb\"ock, Henry-Labord\`ere and Penkner~\cite{BeiglbockHenryLaborderePenkner.11} in the discrete-time case and by Galichon, Henry-Labord\`ere and Touzi~\cite{GalichonHenryLabordereTouzi.11} in continuous time. In the present paper, we focus on the most fundamental case, where the transport takes place in a single time step. That is, a martingale transport from $\mu$ to $\nu$ is a law $P\in \Pi(\mu,\nu)$ under which $(X,Y)$ is a martingale; of course, this necessitates that $\mu$ and $\nu$ have finite first moments. We let 
$$
  \cM(\mu,\nu) = \big\{P\in \Pi(\mu,\nu):\, E^{P}[Y|X]=X\; P\as \big\}
$$
denote the set of martingale transports. Alternately, consider a disintegration $P=\mu\otimes \kappa$ of $P\in \Pi(\mu,\nu)$; then $P$ is a martingale transport if and only if~$x$ is the barycenter (mean) of $\kappa(x)$ for $\mu$-a.e.\ $x\in\R$; that is, $\int y \,\kappa(x,dy)=x$. Here we may also observe that Monge transports are meaningless in this context---only a constant martingale is deterministic.

The martingale property induces an asymmetry between $\mu$ and $\nu$---the marginals can only become more dispersed over time. More precisely, the set $\cM(\mu,\nu)$ is nonempty if and only if $\mu,\nu$ are in convex order, denoted $\mu\leq_{c}\nu$, meaning that $\mu(\phi)\leq\nu(\phi)$ whenever $\phi$ is a convex function (see Proposition~\ref{pr:convexOrder}). Under this condition, the martingale optimal transport problem is given by
\begin{equation}\label{eq:introMOT}
  \sup_{P\in\cM(\mu,\nu)} P(f).
\end{equation}
The present paper develops a complete duality theory for this problem, for general reward functions and marginals. In particular, we obtain existence in the dual problem, and that is the main goal of this paper.

The problem~\eqref{eq:introMOT} was first studied in~\cite{BeiglbockJuillet.12,HobsonNeuberger.12}. In analogy to the Hoeffding--Fr\'echet coupling of classical transport, \cite{BeiglbockJuillet.12} establishes a measure~$P$, the so--called Left-Curtain Coupling, that is optimal in~\eqref{eq:introMOT} for reward functions~$f$ of a specific form.  This form was generalized to a version of the Spence--Mirrlees condition in \cite{HenryLabordereTouzi.13}, where the coupling is also described more explicitly, whereas~\cite{Juillet.14} shows the stability with respect to the marginals. On the other hand, \cite{HobsonKlimmek.15,HobsonNeuberger.12} find the optimal transports for $f(x,y)=\pm |x-y|$. A generalization of the martingale transport problem, where an arbitrary linear constraint is imposed on $\Pi(\mu,\nu)$, is studied in \cite{Zaev.14}.

Martingale optimal transport is motivated by considerations of model uncertainty in financial mathematics. Starting with~\cite{Hobson.98}, a stream of literature studies robust bounds for option prices via the Skorokhod embedding problem and this can be interpreted as optimal transport in continuous time; cf.\ \cite{Hobson.11, Obloj.04} for surveys. The opposite direction is taken in \cite{BeiglbockCoxHuesman.14}, where Skorokhod embeddings are studied from an optimal transport point of view. Recently, a rich literature has emerged around the topics of model robustness and transport; see, e.g., \cite{AcciaioBeiglbockPenknerSchachermayer.12, BeiglbockNutz.14, BouchardNutz.13, BurzoniFrittelliMaggis.15, CampiLaachirMartini.14, CheriditoKupperTangpi.14, FahimHuang.14, Nutz.13} for models in discrete time and \cite{BeiglbockHenryLabordereTouzi.15, BiaginiBouchardKardarasNutz.14, CoxObloj.11, CoxHouObloj.14, DolinskySoner.12, DolinskySoner.14, HenryLabordereOblojSpoidaTouzi.12, HenryLabordereTanTouzi.14, KallbladTanTouzi.15, NeufeldNutz.12, Nutz.14, Stebegg.14, TanTouzi.11} for continuous-time models.

\subsection{Duality for Classical Transport}

Let us first recall the duality results for the classical case~\eqref{eq:introOT}. Indeed, the dual problem is given by
\begin{equation}\label{eq:introOTdual}
  \inf_{\varphi,\psi} \, \{\mu(\varphi)+\nu(\psi)\},\quad\mbox{subject to}\quad  \varphi(x)+\psi(y)\geq f(x,y), \quad (x,y)\in\R^{2}.
\end{equation}
Here $\varphi\in L^{1}(\mu)$ and $\psi\in L^{1}(\nu)$ are real functions that can be seen as Lagrange multipliers for the marginal constraints in~\eqref{eq:introOT}. There are two fundamental results on this duality in a general setting, obtained by Kellerer~\cite{Kellerer.84}. First, there is \emph{no duality gap}; i.e., the values of~\eqref{eq:introOT} and~\eqref{eq:introOTdual} coincide. Second, \emph{there exists an optimizer} $(\varphi,\psi)\in L^{1}(\mu)\times L^{1}(\nu)$ for the dual problem, whenever its value~\eqref{eq:introOTdual} is finite. While additional regularity assumptions allow for easier proofs, the results of~\cite{Kellerer.84} apply to \emph{any} Borel function $f: \R^{2}\to [0,\infty]$.  An important application is the ``Fundamental Theorem of Optimal Transport'' \cite{AmbrosioGigli.13, Villani.09} or ``Monotonicity Principle'' which describes the trajectories used by optimal transports: there exists a set $\Gamma\subseteq \R^{2}$ such that a given transport $P\in\Pi(\mu,\nu)$ is optimal for~\eqref{eq:introOT} if and only if $P$ is concentrated on~$\Gamma$. This set can be obtained directly from a dual optimizer $(\varphi,\psi)$ by setting
\begin{equation}\label{eq:introGamma}
    \Gamma = \{(x,y)\in\R^{2}:\, \varphi(x)+\psi(y)=f(x,y)\}.
\end{equation}
In fact, given $\psi$, one can find $\varphi$ by $f$-concave conjugation and vice versa, so that either of the functions may be called the Kantorovich potential of the problem, and then $\Gamma$ is the graph of its $f$-subdifferential. The set $\Gamma$ has an important property called $f$-cyclical monotonicity which can be used to analyze the geometry of optimal transports; we refer to~\cite{AmbrosioGigli.13, Villani.09} for further background.

\subsection{Duality for Martingale Transport}

Let us now move on to the dual problem in the case of interest, where the martingale constraint gives rise to an additional Lagrange multiplier. Formally, $E^{P}[Y|X]=X$ is equivalent to $E^{P}[h(X)(Y-X)]=0$ for all functions $h$ and thus the domain of the analogue of~\eqref{eq:introOTdual} consists of triplets $(\varphi,\psi,h)$ of real functions such that
\begin{equation}\label{eq:introMOTpw}
  \varphi(x)+\psi(y)+h(x)(y-x)\geq f(x,y), \quad (x,y)\in\R^{2},
\end{equation}
while the dual cost function is unchanged,
\begin{equation*}%
  \inf_{\varphi,\psi,h}\, \{\mu(\varphi)+\nu(\psi)\}.
\end{equation*}
In~\cite{BeiglbockHenryLaborderePenkner.11}, it was shown that there is no duality gap whenever the reward function~$f$ is \emph{upper semicontinuous} and satisfies a linear growth condition, and the analogous result holds in the setting of~\cite{Zaev.14}. On the other hand, a counterexample in~\cite{BeiglbockHenryLaborderePenkner.11} showed that the dual problem may \emph{fail} to admit an optimizer, even if $f$ is bounded continuous and the marginals are compactly supported.

The proofs of the positive results in~\cite{BeiglbockHenryLaborderePenkner.11,Zaev.14}, absence of a duality gap, reduce to classical transport theory by dualizing the martingale constraint and using a minimax argument. Only the latter step requires upper semicontinuity, and it is easy to believe that it is a technical condition necessitated only by the technique of proof. This turns out to be wrong: we provide a counterexample (Example~\ref{ex:dualityGap}) showing that the dual problem~\eqref{eq:introMOTpw} can produce a duality gap in a fairly tame setting with compactly supported marginals and a reward function that is bounded and lower semicontinuous. Regarding the absence of an optimizer, we provide a counterexample (Example~\ref{ex:noAttainment}) which is, in a sense to be made specific, simpler than the one in~\cite{BeiglbockHenryLaborderePenkner.11} and suggests that failure of existence is generic as soon as the marginals do not satisfy a condition called irreducibility (see below and Section~\ref{se:prelimConvexOrder}) and $f$ is not smooth.

Let us now introduce a formulation of the dual problem which will allow us to overcome both issues and develop a complete duality theory---dual existence and no duality gap---for general reward functions.
The most important novelty is that we shall reformulate the pointwise inequality of~\eqref{eq:introMOTpw} in a quasi-sure way. Indeed, we say that a property holds \emph{$\cM(\mu,\nu)$-quasi-surely} (q.s.\ for short) if it holds outside a $\cM(\mu,\nu)$-polar set; that is, a set which is $P$-null for all $P\in\cM(\mu,\nu)$. 
We then replace~\eqref{eq:introMOTpw} by
\begin{equation}\label{eq:introMOTqs}
  \varphi(X)+\psi(Y)+h(X)(Y-X)\geq f(X,Y) \quad \cM(\mu,\nu)\qs;
\end{equation}
i.e., the inequality holds $P$-a.s.\ for all $P\in\cM(\mu,\nu)$. For the classical transport, it is known that all polar sets are of a trivial type---they are negligible for one of the marginals. This is different in the martingale case. Indeed, as observed in~\cite{BeiglbockJuillet.12}, there are obstacles that cannot be crossed by any martingale transport. These barriers divide the real line into intervals that (almost) do not interact and are therefore called irreducible components. Our first important result (Theorem~\ref{th:polarDescr}) provides a complete characterization of the $\cM(\mu,\nu)$-polar sets: a subset of $\R^{2}$ is polar if and only if it consists of trajectories a) crossing a barrier or b) negligible for one of the marginals. On the strength of this result, we have a rather precise understanding of~\eqref{eq:introMOTqs}; namely, it represents a pointwise inequality on each irreducible component, modulo sets that are not seen by the marginals.

We thus proceed by first studying an irreducible component; the analysis has two parts. On the one hand, there are soft arguments of separation (Hahn--Banach) and extension (Choquet theory) that are familiar from classical transport theory. On the other hand, there is an important closedness result (Proposition~\ref{pr:closednessIrred}) based on novel arguments: given reward functions $f_{n}\to f$ and corresponding almost-optimal dual elements $(\varphi_{n},\psi_{n},h_{n})$, we construct a limit $(\varphi,\psi,h)$ for~$f$.  The proof of this result is deeply linked to the convex order of the marginals. Indeed, we introduce concave functions~$\chi_{n}$ which control $(\varphi_{n},\psi_{n})$ in the sense of one-sided bounds. 
A compactness result of Arzela--Ascoli type is established for the sequence $(\chi_{n})$, based on a bound of the form 
\begin{equation}\label{eq:introCompactness}
  0\leq \int \chi_{n} \, d(\mu-\nu)\leq C.
\end{equation}
After finding a limit $\chi$ for $\chi_{n}$, we can produce limits $(\varphi,\psi)$ for $(\varphi_{n},\psi_{n})$ by Komlos-type arguments, and the corresponding function $h$ can be found in an a posteriori fashion. The compactness result yields some insight into the failure of the pointwise formulation~\eqref{eq:introMOTpw} for the global problem: the bound~\eqref{eq:introCompactness} does not control the concavity of~$\chi_{n}$ at barriers because the inequality between $\mu$ and $\nu$ is not ``strict'' in the convex order  at these points. 

A second relaxation is necessary to obtain our duality result; namely, the cost $\mu(\varphi)+\nu(\psi)$ needs to be defined in an extended sense. We provide counterexamples showing that the existence of dual optimizers (Example~\ref{ex:noIntegrability}) and in some cases the absence of a duality gap (Example~\ref{ex:gapWithIntegrability}) break down if one insists on $\varphi$ and $\psi$ being integrable for $\mu$ and $\nu$, individually. We shall see that several natural definitions of $\mu(\varphi)+\nu(\psi)$ lead to the same value.

With these notions in place, our main result (Theorem~\ref{th:dualityGlobal}) is that duality holds for arbitrary Borel reward functions $f:\R^{2}\to [0,\infty]$; here the lower bound can be relaxed easily (Remark~\ref{rk:lowerBound}) but not eliminated completely (Example~\ref{ex:dualityGapLower}). Moreover, existence holds in the dual problem whenever it is finite. As a  consequence, we derive a monotonicity principle (Corollary~\ref{co:monotonicityPrinciple}) with a set analogous to~\eqref{eq:introGamma} in a fairly definitive form, generalizing and simplifying results of \cite{BeiglbockJuillet.12, Zaev.14}.

While there are no previous results on duality for irregular reward functions, we mention that the proof of the monotonicity principle in \cite{BeiglbockJuillet.12} contains elements of a theory for dual optimizers for the case of continuous $f$, although the dual problem as such is not formalized in \cite{BeiglbockJuillet.12}. We expect that the quasi-sure formulation proposed in the present paper will prove to be a useful framework not only for the situation at hand but for a large class of transport problems; in particular, to obtain dual attainment under general conditions.

The remainder of the paper is organized as follows. In Section~\ref{se:prelimConvexOrder}, we recall preliminaries on the convex order and potential functions. The structure of $\cM(\mu,\nu)$-polar sets is characterized in Section~\ref{se:polarSets}, and Section~\ref{se:generalizedIntegral} discusses the extended definition of $\mu(\varphi)+\nu(\psi)$. The crucial closedness result for the dual problem is obtained in Section~\ref{se:closednessOnIrred}, which allows us to establish the duality on an irreducible component in Section~\ref{se:dualityOnIrred}. Section~\ref{se:mainResults} combines the previous results to obtain the global duality theorem and the monotonicity principle. The counterexamples are collected in the concluding Section~\ref{se:counterexamples}.

\section{Preliminaries on the Convex Order}\label{se:prelimConvexOrder}

It will be useful to consider finite measures, not necessarily normalized to be probabilities. The notions introduced in Section~\ref{se:intro} extend in an obvious way. 
Let $\mu,\nu$ be finite measures on $\R$ with finite first moment. We say that~$\mu$ and~$\nu$ are in \emph{convex order}, denoted $\mu\leq_{c}\nu$, if $\mu(\phi)\leq \nu(\phi)$ for any convex function $\phi: \R\to \R$. It then follows that $\mu$ and $\nu$ have the same total mass and the same barycenter. An alternative characterization of this order refers to the so-called potential function, defined by
$$
  u_{\mu}: \R\to\R,\quad   u_{\mu}(x) := \int |t-x|\, \mu(dt).
$$
This is a nonnegative convex function with a minimum at the median of~$\mu$, and $\mu$ can be recovered from $u_{\mu}$ via the second derivative measure.
The following result is known; the nontrivial part is \cite[Theorem~8]{Strassen.65}.

\begin{proposition}\label{pr:convexOrder} Suppose that $\mu(\R)=\nu(\R)$. The following are equivalent:
  \begin{enumerate}
  \item The measures $\mu$ and $\nu$ are in convex order: $\mu\leq_{c}\nu$.
  \item The potential functions of $\mu$ and $\nu$ are ordered: $u_{\mu}\leq u_{\nu}$.
  \item There exists a martingale transport from $\mu$ to $\nu$: $\cM(\mu,\nu)\neq \emptyset$.  \end{enumerate}
\end{proposition}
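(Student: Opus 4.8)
The plan is to prove $(iii)\Rightarrow(i)$, then the equivalence $(i)\Leftrightarrow(ii)$, and finally $(i)\Rightarrow(iii)$; only the last implication is substantial, and it goes back to \cite{Strassen.65}. For $(iii)\Rightarrow(i)$, let $P\in\cM(\mu,\nu)$ and let $\phi:\R\to\R$ be convex. Since $\phi$ is minorised by an affine function and $\mu,\nu$ have finite first moments, $\phi(X)$ and $\phi(Y)$ have integrals in $(-\infty,+\infty]$ under $P$, and the conditional Jensen inequality gives $E^{P}[\phi(Y)\,|\,X]\geq\phi(E^{P}[Y\,|\,X])=\phi(X)$ $P$-a.s.; integrating, $\nu(\phi)=E^{P}[\phi(Y)]\geq E^{P}[\phi(X)]=\mu(\phi)$, so $\mu\leq_{c}\nu$.

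The implication $(i)\Rightarrow(ii)$ is immediate since $t\mapsto|t-x|$ is convex for each $x$. For $(ii)\Rightarrow(i)$ I would first record the asymptotics $u_{\mu}(x)=|x|\,\mu(\R)-\sgn(x)\int t\,\mu(dt)+o(1)$ as $x\to\pm\infty$, which follow from $\int|t|\,\mu(dt)<\infty$ by dominated convergence, and likewise for $\nu$; together with $\mu(\R)=\nu(\R)$ and $u_{\mu}\leq u_{\nu}$ these force $\int t\,\mu(dt)=\int t\,\nu(dt)$, so $\mu$ and $\nu$ integrate every affine function equally. Now, given convex $\phi:\R\to\R$, let $\phi_{n}$ agree with $\phi$ on $[-n,n]$ and equal the tangent lines of $\phi$ at the endpoints outside $[-n,n]$; then $\phi_{n}$ is convex, increases pointwise to $\phi$, is minorised by a fixed affine function, and has a finite second-derivative measure $m_{n}\geq0$ supported in $[-n,n]$. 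From the representation $\phi_{n}(t)=a_{n}+b_{n}t+\tfrac12\int|t-x|\,m_{n}(dx)$ and Tonelli's theorem, $\mu(\phi_{n})=a_{n}\mu(\R)+b_{n}\int t\,\mu(dt)+\tfrac12\int u_{\mu}\,dm_{n}$, and similarly for $\nu$; the first two summands agree for $\mu$ and $\nu$, while $\int u_{\mu}\,dm_{n}\leq\int u_{\nu}\,dm_{n}$ because $m_{n}\geq0$. Hence $\mu(\phi_{n})\leq\nu(\phi_{n})$, and monotone convergence yields $\mu(\phi)\leq\nu(\phi)$.

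The main work is $(i)\Rightarrow(iii)$, which I expect to be the principal obstacle; the plan here is to invoke \cite[Theorem~8]{Strassen.65}, or, for a self-contained argument, to run the Hahn--Banach/Riesz scheme that also underlies the duality results later in the paper. One introduces, on a space of continuous functions of at most linear growth, the sublinear functional $p(g):=\inf\{\mu(\varphi)+\nu(\psi):\varphi(x)+\psi(y)+h(x)(y-x)\geq g(x,y)\text{ for all }x,y\}$. It is finite-valued (dominate $g$ by $\varphi(x)=\psi(y)=C(1+|x|+|y|)$ and $h=0$), and---this is the sole point where $\mu\leq_{c}\nu$ enters---$p(0)=0$: indeed, if $\varphi(x)+\psi(y)+h(x)(y-x)\geq0$ for all $x,y$, then for each $x$ the affine function $y\mapsto\varphi(x)+h(x)(y-x)$ dominates $-\psi$, so $\varphi\geq-\conv\psi$ pointwise (with $\conv\psi$ the largest convex minorant of $\psi$), whence $\mu(\varphi)\geq-\mu(\conv\psi)\geq-\nu(\conv\psi)\geq-\nu(\psi)$ by applying $\mu\leq_{c}\nu$ to the convex function $\conv\psi\leq\psi$. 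One then extends a linear functional dominated by $p$ via Hahn--Banach and represents it by a measure $P$, which is readily checked to satisfy $P\in\Pi(\mu,\nu)$ and $\int h(x)(y-x)\,dP=0$ for all bounded continuous $h$, i.e.\ $P\in\cM(\mu,\nu)$. Beyond this clean use of the convex order, the delicate point in such a proof is the representation step: the non-compactness of $\R^{2}$ and the use of unbounded test functions must be handled so as to produce a genuine $\sigma$-additive probability rather than only a finitely additive set function.
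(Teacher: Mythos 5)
Your proposal is correct and takes essentially the same route as the paper, which states the result as known and cites \cite[Theorem~8]{Strassen.65} for the only nontrivial implication (i)$\Rightarrow$(iii); your arguments for (iii)$\Rightarrow$(i) via conditional Jensen and for (i)$\Leftrightarrow$(ii) via the tail asymptotics of the potential functions and the second-derivative representation of truncated convex functions are standard and sound. The Hahn--Banach sketch you add for (i)$\Rightarrow$(iii) is a reasonable outline, and you correctly identify the $\sigma$-additivity/tightness of the representing measure as the step that would still need work if one wanted to avoid citing Strassen.
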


It will be important to distinguish the intervals where $u_{\mu}< u_{\nu}$ from the points where the potential functions touch, because such points act as barriers for martingale transports. In all that follows, the statement $\mu\leq_{c}\nu$ implicitly means that $\mu,\nu$ are finite measures on $\R$ with finite first moment. %

\begin{definition}\label{de:irred}
  The pair $\mu\leq_{c}\nu$ is \emph{irreducible} if the set $I=\{u_{\mu}< u_{\nu}\}$ is connected and $\mu(I)=\mu(\R)$. In this situation, let~$J$ be the union of~$I$ and any endpoints of $I$ that are atoms of $\nu$; then~$(I,J)$ is the \emph{domain} of $(\mu,\nu)$.
\end{definition}

As $u_{\mu}= u_{\nu}$ outside of $I$ and $\mu(I)=\mu(\R)$ and $\mu$, $\nu$ have the same mass and mean, the measure $\nu$ is concentrated on $J$. More precisely, the open interval~$I$ is the interior of the convex hull of the support of~$\nu$, and $J$ is the minimal superset of $I$ supporting~$\nu$. The marginals $\mu\leq_{c}\nu$ can be decomposed into irreducible components as follows; cf.\ \cite[Theorem 8.4]{BeiglbockJuillet.12}.

\begin{proposition}\label{pr:decomp}
  Let $\mu\leq_{c}\nu$ and let $(I_{k})_{1\leq k \leq N}$ be the (open) components of $\{u_{\mu}<u_{\nu}\}$, where $N\in \{0,1,\dots,\infty\}$. Set $I_{0}=\R\setminus \cup_{k\geq1} I_{k}$ and $\mu_{k}=\mu|_{I_{k}}$ for $k\geq 0$, so that $\mu=\sum_{k\geq0} \mu_{k}$.
  Then, there exists a unique decomposition $\nu=\sum_{k\geq0} \nu_{k}$ such that
  $$
    \mu_{0} = \nu_{0} \quad\quad \mbox{and}\quad\quad \mu_{k}\leq_{c} \nu_{k} \quad \mbox{for all} \quad k\geq1,
  $$
  and this decomposition satisfies $I_{k}=\{u_{\mu_{k}}<u_{\nu_{k}}\}$ for all $k\geq1$.
  Moreover, any $P\in\cM(\mu,\nu)$ admits a unique decomposition
  $
    P=\sum_{k\geq0} P_{k}
  $
  such that $P_{k}\in\cM(\mu_{k},\nu_{k})$ for all $k\geq0$.
\end{proposition}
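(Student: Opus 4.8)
The plan is to prove a barrier lemma describing where a martingale transport can move mass, deduce two localization statements from it, and then assemble both assertions of the proposition by potential-function bookkeeping together with soft measure theory. The barrier lemma states: if $a\in\R$ satisfies $u_{\mu}(a)=u_{\nu}(a)$, then for every $P=\mu\otimes\kappa\in\cM(\mu,\nu)$ and $\mu$-a.e.\ $x$, the measure $\kappa(x)$ is carried by $(-\infty,a]$ when $x<a$, by $[a,\infty)$ when $x>a$, and equals $\delta_{a}$ when $x=a$; in particular no element of $\cM(\mu,\nu)$ moves mass across~$a$. To prove it, I would apply the conditional Jensen inequality to the convex function $\phi_{a}(t):=(t-a)^{+}$, obtaining $\int\phi_{a}\,d\kappa(x)\ge(x-a)^{+}$ for $\mu$-a.e.\ $x$ and hence $\nu(\phi_{a})\ge\mu(\phi_{a})$ after integrating against~$\mu$. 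Since $\mu$ and $\nu$ have the same total mass and barycenter, $u_{\mu}(a)=u_{\nu}(a)$ is equivalent to $\mu(\phi_{a})=\nu(\phi_{a})$, so the pointwise inequality is in fact an equality $\mu$-a.e.; the three cases then follow by inspecting the equality case of Jensen, using also the martingale identity $\int y\,\kappa(x,dy)=x$.

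From this I would extract two localization statements valid for every $P\in\cM(\mu,\nu)$. First, the finite endpoints of each $I_{k}$ ($k\ge1$) lie in $\R\setminus\bigcup_{j\ge1}I_{j}=\{u_{\mu}=u_{\nu}\}$, so applying the barrier lemma at both endpoints of $I_{k}$ gives that $\kappa(x)$ is concentrated on $\overline{I_{k}}$ for $\mu$-a.e.\ $x\in I_{k}$. Second, every point of $I_{0}$ is a point where $u_{\mu}=u_{\nu}$, and applying the barrier lemma at each point of a countable dense subset of $I_{0}$ and intersecting the resulting full sets yields $\kappa(x)=\delta_{x}$ for $\mu$-a.e.\ $x\in I_{0}$; here one uses the elementary fact that all but countably many points of the closed set $I_{0}$ are two-sided accumulation points of $I_{0}$, the countably many exceptions being either $\mu$-atoms (handled directly by the barrier lemma at that point) or $\mu$-null.

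Existence and uniqueness of the decomposition of $\nu$ then proceed as follows. For existence, pick any $P\in\cM(\mu,\nu)$ (nonempty by Proposition~\ref{pr:convexOrder}) and let $\nu_{k}$ be the second marginal of $P|_{\{X\in I_{k}\}}$; since $\{X\in I_{k}\}$ is $\sigma(X)$-measurable, $P|_{\{X\in I_{k}\}}$ is a finite martingale measure with first marginal $\mu_{k}$, so $\mu_{k}\le_{c}\nu_{k}$, $\sum_{k\ge0}\nu_{k}=\nu$, and by the localization statements $\nu_{k}$ is carried by $\overline{I_{k}}$ for $k\ge1$ while $\nu_{0}=\mu_{0}$. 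Additivity of potential functions gives $\sum_{k\ge0}(u_{\nu_{k}}-u_{\mu_{k}})=u_{\nu}-u_{\mu}=:w\ge0$; each summand is $\ge0$ and vanishes off $I_{k}$ (for $k\ge1$ because the potentials of $\nu_{k}$ and $\mu_{k}$ coincide off $\overline{I_{k}}$, hence at the endpoints by continuity; for $k=0$ identically), so the disjointness of the $I_{k}$ forces $u_{\nu_{k}}-u_{\mu_{k}}=w\,\1_{I_{k}}$. This determines $\nu_{k}$ (independently of $P$) and, since $w>0$ on $I_{k}$, yields $\{u_{\mu_{k}}<u_{\nu_{k}}\}=I_{k}$. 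For uniqueness, given another decomposition $\nu=\sum_{k\ge0}\tilde\nu_{k}$ with $\tilde\nu_{0}=\mu_{0}$ and $\mu_{k}\le_{c}\tilde\nu_{k}$, I would use Proposition~\ref{pr:convexOrder} to pick $Q_{k}\in\cM(\mu_{k},\tilde\nu_{k})$ for $k\ge1$, take $Q_{0}$ supported on the diagonal with both marginals $\mu_{0}$, and form $Q:=\sum_{k\ge0}Q_{k}\in\cM(\mu,\nu)$; the localization statements applied to $Q$ show each $\tilde\nu_{k}$ is carried by $\overline{I_{k}}$, and the same potential computation then gives $u_{\tilde\nu_{k}}-u_{\mu_{k}}=w\,\1_{I_{k}}$, i.e.\ $\tilde\nu_{k}=\nu_{k}$.

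Finally, for arbitrary $P\in\cM(\mu,\nu)$ I would set $P_{k}:=P|_{\{X\in I_{k}\}}$, so that $P=\sum_{k\ge0}P_{k}$, each $P_{k}$ is a martingale measure with first marginal $\mu_{k}$, and its family of second marginals is a decomposition of $\nu$ of the type uniquely characterized in the previous step, hence each equals $\nu_{k}$; thus $P_{k}\in\cM(\mu_{k},\nu_{k})$. Uniqueness here is soft: if $P=\sum_{k}\tilde P_{k}$ with $\tilde P_{k}\in\cM(\mu_{k},\nu_{k})$, then $\tilde P_{k}$ is carried by $\{X\in I_{k}\}$ while $\tilde P_{j}(\{X\in I_{k}\})=\mu_{j}(I_{k})=0$ for $j\ne k$, forcing $\tilde P_{k}=P|_{\{X\in I_{k}\}}=P_{k}$. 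The main obstacle is the first half: while the barrier lemma is a one-line Jensen computation, turning it into the two localization statements --- in particular the claim that every $P$ freezes the mass lying in $I_{0}$ --- needs the point-set observation about accumulation points so that countably many applications of the lemma suffice; everything after that is additive bookkeeping with potential functions and routine restriction of measures.
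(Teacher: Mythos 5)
Your proposal is correct. Note that the paper does not prove Proposition~\ref{pr:decomp} at all --- it is imported from \cite[Theorem~8.4]{BeiglbockJuillet.12} --- so there is no in-paper proof to compare against; what you have written is a sound self-contained reconstruction along the standard lines of that reference. The key steps all check out: the barrier lemma via Jensen applied to $t\mapsto(t-a)^{+}$ at points where $u_{\mu}=u_{\nu}$ (using equal mass and barycenter to convert $u_{\mu}(a)=u_{\nu}(a)$ into $\mu(\phi_a)=\nu(\phi_a)$), the countability argument for the non-two-sided accumulation points of the closed set $I_{0}$ with atoms handled separately, and the identification $u_{\nu_k}-u_{\mu_k}=(u_{\nu}-u_{\mu})\1_{I_k}$ via additivity of potentials and the fact that $u_{\nu_k}=u_{\mu_k}$ off $\overline{I_k}$, which simultaneously gives existence, $P$-independence, uniqueness, and $I_k=\{u_{\mu_k}<u_{\nu_k}\}$.
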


	The index $0$ is special in the above: the measure $P_{0}$ is the unique martingale transport from $\mu_{0}$ to itself, given by the law of $x\mapsto (x,x)$ under $\mu_{0}$. This corresponds to a constant martingale or the identical Monge transport. In particular, $P_{0}$ does not depend on $P\in\cM(\mu,\nu)$. We observe that $P_{0}$ is concentrated on $\Delta_{0} := \Delta \cap I_{0}^{2}$, the part of the diagonal $\Delta=\{(x,x)\in\R^{2}:\, x\in\R\}$ which is not contained in any of the squares $I_{k}\times J_{k}$ for $k\geq 1$. Thus, $\Delta_{0}$ will play a role similar to $I_{k}\times J_{k}$ for $k=0$.

A second remark is that both of the families $(\mu_{k})_{k\geq0}$ and $(P_{k})_{k\geq0}$ are mutually singular, whereas $(\nu_{k})_{k\geq0}$ need not be. Indeed, an atom of $\nu$ may be split such as to contribute to two adjacent components $\nu_{k}$.

We close this section with a technical remark for later use.

\begin{remark}\label{rk:irredAtoms}
   We observe from the definition that the continuous convex function $u_{\mu}$ is affine to the left and to the right of the support of $\mu$, with absolute slope equal to the mass of $\mu$. Moreover, discontinuities of the first derivative correspond to atoms of $\mu$.
   
   Let $\mu\leq_{c}\nu$ be irreducible with domain $(I,J)$ and write $I=(l,r)$. As $\mu(I)=\mu(\R)$, the measure $\mu$ cannot have atoms at the boundary points of~$I$.
  Suppose that $r<\infty$; then the derivative $d u_{\mu}(r)$ exists and is equal to~$\mu(\R)$. However, the measure $\nu$ may have an atom at $r$, and while the right derivative $d^{+} u_{\nu}(r)$ is always equal to $d u_{\mu}(r)$, the left derivative satisfies
  $$
    d u_{\mu}(r) - d^{-} u_{\nu}(r) = 2\nu(\{r\}).
  $$
  Similarly, if $l>-\infty$, we have $d^{-} u_{\nu}(l)=d u_{\mu}(l)=-\mu(\R)$ and 
  $$
    d^{+} u_{\nu}(l) -  d u_{\mu}(l)= 2\nu(\{l\}).
  $$
\end{remark}

\section{The Structure of $\cM(\mu,\nu)$-Polar Sets}\label{se:polarSets}

The goal of this section is to characterize the sets which cannot be charged by any martingale transport. Given a collection $\cP$ of measures on some space~$(\Omega,\cF)$, a set $B\subseteq\Omega$ is called \emph{$\cP$-polar} if is it $P$-null for every $P\in\cP$.

For the classical mass transport, the following result can be obtained by applying Kellerer's duality theorem~\cite{Kellerer.84} to the indicator function $f=\1_{B}$; cf.\ \cite[Proposition~2.1]{BeiglbockGoldsternMareschSchachermayer.09}.

\begin{proposition}\label{pr:polarDescrClassical}
  Let $\mu,\nu$ be finite measures of the same total mass and let $B\subseteq \R^{2}$ be a Borel set. Then $B$ is $\Pi(\mu,\nu)$-polar if and only if there exist a $\mu$-nullset $N_{\mu}$ and a $\nu$-nullset $N_{\nu}$ such that
  $$
     B \subseteq (N_{\mu}\times \R) \cup (\R \times N_{\nu}). 
  $$
\end{proposition}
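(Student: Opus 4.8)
The easy direction is the ``if'' part: if $B\subseteq (N_{\mu}\times\R)\cup(\R\times N_{\nu})$ with $\mu(N_{\mu})=\nu(N_{\nu})=0$, then for every $P\in\Pi(\mu,\nu)$ we have $P(B)\leq P(N_{\mu}\times\R)+P(\R\times N_{\nu})=\mu(N_{\mu})+\nu(N_{\nu})=0$ since $P$ has marginals $\mu$ and $\nu$; hence $B$ is $\Pi(\mu,\nu)$-polar.

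For the converse, the plan is to apply Kellerer's duality theorem \cite{Kellerer.84} to the bounded Borel reward $f=\1_{B}:\R^{2}\to[0,1]$. Polarity of $B$ says $\sup_{P\in\Pi(\mu,\nu)}P(\1_{B})=0$; as this value is finite, Kellerer's theorem yields both absence of a duality gap and a dual optimizer, i.e.\ functions $\varphi\in L^{1}(\mu)$, $\psi\in L^{1}(\nu)$ with $\varphi(x)+\psi(y)\geq\1_{B}(x,y)$ for all $(x,y)\in\R^{2}$ and $\mu(\varphi)+\nu(\psi)=0$. (If $\mu=\nu=0$ the assertion is trivial with $N_{\mu}=\R$, so I may assume the common total mass is positive.)

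The next step is a normalization exploiting the free additive constant in the dual pair. Since $\varphi$, $\psi$ are $\mu$- resp.\ $\nu$-a.e.\ finite, pick points $x_{0}$, $y_{0}$ at which they are finite; the pointwise constraint then forces $\psi\geq-\varphi(x_{0})$ and $\varphi\geq-\psi(y_{0})$ everywhere on $\R$, so $a:=\inf_{x}\varphi(x)$ and $b:=\inf_{y}\psi(y)$ are finite and $a+b\geq 0$. Replacing $\varphi$ by $\varphi-a$ and $\psi$ by $\psi+a$ preserves the constraint and, because $\mu(\R)=\nu(\R)$, also the cost $\mu(\varphi)+\nu(\psi)=0$; after this shift both functions are nonnegative. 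Two nonnegative quantities summing to zero must both vanish, so $\mu(\varphi)=\nu(\psi)=0$, whence $\varphi=0$ $\mu$-a.s.\ and $\psi=0$ $\nu$-a.s.

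Finally, set $N_{\mu}=\{\varphi>0\}$ and $N_{\nu}=\{\psi>0\}$ (Borel, after choosing Borel representatives); by the previous step these are $\mu$- resp.\ $\nu$-nullsets. For $(x,y)\in B$ we have $\varphi(x)+\psi(y)\geq 1>0$, hence $x\in N_{\mu}$ or $y\in N_{\nu}$, so $B\subseteq(N_{\mu}\times\R)\cup(\R\times N_{\nu})$, as desired. The only substantive input is Kellerer's theorem (no gap and dual attainment for arbitrary bounded Borel $f$); the one mildly delicate point is the normalization making both potentials nonnegative, which is needed precisely because the $L^{1}$ potentials are determined only up to the additive constant that balances the equal masses of $\mu$ and $\nu$.
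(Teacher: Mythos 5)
Your proof is correct and follows exactly the route the paper indicates (the paper gives only a one-line citation: apply Kellerer's duality theorem to $f=\1_{B}$, cf.\ \cite[Proposition~2.1]{BeiglbockGoldsternMareschSchachermayer.09}), and you have filled in the details soundly. In particular, the normalization making both potentials nonnegative, which uses $\mu(\R)=\nu(\R)$ and the finiteness of the infima forced by the pointwise constraint, is handled correctly.
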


The above result, which holds true more generally for arbitrary Polish spaces, states that the only $\Pi(\mu,\nu)$-polar sets are the obvious ones: the sets which are not seen by the marginals. This is the reason why in the classical dual transport problem, there is no difference between a quasi-sure formulation and a pointwise formulation. Namely, if $\varphi(X)+\psi(Y)\geq f$ holds $\Pi(\mu,\nu)$-q.s., let $B$ be the exceptional set and let $N_{\mu}$, $N_{\nu}$ be as above. Then setting $\varphi=\infty$ on $N_{\mu}$ and $\psi=\infty$ on $N_{\nu}$ yields $\varphi(X)+\psi(Y)\geq f$ pointwise on~$\R^{2}$, without changing the cost $\mu(\varphi)+\nu(\psi)$.

The situation is fundamentally different for the martingale transport. Unless $\mu\leq_{c}\nu$ is irreducible, there are obstructions to all martingale transports, and more precisely, a set that ``fails to be on a component'' is polar, even if it is seen by the marginals. The following result completely describes the structure of $\cM(\mu,\nu)$-polar sets.

\begin{figure}[h]
  \caption{In this illustration of Theorem~\ref{th:polarDescr}, the striped areas correspond to the domains of two irreducible components. The dotted areas are polar even though they are not negligible for the marginals.} 
  \begin{center}
    \includegraphics[scale=.25]{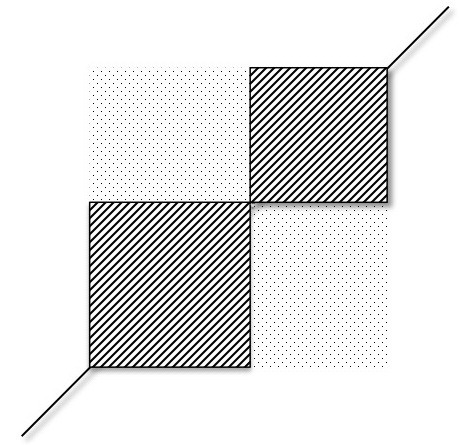}
  \end{center}
\end{figure}

\begin{theorem}\label{th:polarDescr}
  Let $\mu\leq_{c}\nu$ and let $B\subseteq \R^{2}$ be a Borel set. Then $B$ is $\cM(\mu,\nu)$-polar if and only if there exist a $\mu$-nullset $N_{\mu}$ and a $\nu$-nullset $N_{\nu}$ such that
  $$
     B \subseteq (N_{\mu}\times \R) \cup (\R \times N_{\nu}) \cup \left(\Delta \cup \bigcup_{k\geq1} I_{k}\times J_{k}\right)^{c},
  $$
  where $\Delta=\{(x,x)\in\R^{2}:\, x\in\R\}$ is the diagonal.
\end{theorem}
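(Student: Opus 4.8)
The plan is to prove the two implications separately. The ``if'' direction is immediate from Proposition~\ref{pr:decomp}: if $B$ has the stated form and $P\in\cM(\mu,\nu)$, decompose $P=\sum_{k\geq0}P_{k}$; then $P_{0}$ is concentrated on $\Delta_{0}\subseteq\Delta$ and, for $k\geq1$, $P_{k}$ has marginals concentrated on $I_{k}$ and $J_{k}$, hence is concentrated on $I_{k}\times J_{k}$. Thus $P$ lives on $\Delta\cup\bigcup_{k\geq1}I_{k}\times J_{k}$, while $P(N_{\mu}\times\R)=\mu(N_{\mu})=0=\nu(N_{\nu})=P(\R\times N_{\nu})$; summing the three bounds gives $P(B)=0$.

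For the ``only if'' direction I would first reduce to an irreducible pair. If $B$ is $\cM(\mu,\nu)$-polar, then combining any choice of $P_{k}\in\cM(\mu_{k},\nu_{k})$ ($k\geq1$; nonempty by Proposition~\ref{pr:convexOrder}) with the canonical $P_{0}$ produces an element of $\cM(\mu,\nu)$ dominating each summand; hence $\{x\in I_{0}:(x,x)\in B\}$ is $\mu$-null and $B\cap(I_{k}\times J_{k})$ is $\cM(\mu_{k},\nu_{k})$-polar for every $k\geq1$. Granting the statement for irreducible pairs, the pieces are then glued: the resulting $\mu$-nullsets sit inside the pairwise disjoint $I_{k}$, and the resulting $\nu$-nullsets may be taken inside $I_{k}$ as well, since they must avoid the atoms of $\nu_{k}$ (which include all endpoints of $I_{k}$ that lie in $J_{k}$), and $\nu$ agrees with $\nu_{k}$ on the open interval $I_{k}$; the only remaining points of $B$, namely $(x,p)$ with $x\in I_{k}$ and $p$ an atom of $\nu$ but $\nu_{k}(\{p\})=0$, automatically fall into $(\Delta\cup\bigcup I_{k}\times J_{k})^{c}$. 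So assume $(\mu,\nu)$ irreducible with domain $(I,J)$, $I=(l,r)$, and $B$ $\cM(\mu,\nu)$-polar; one must show $B$ is \emph{trivially covered}. Since $\mu(\R\setminus I)=0$ and $\nu$ puts no mass on $I^{c}\setminus\{l,r\}$, it remains to cover $B\cap(I\times I)$ and the endpoint slices $B\cap(\R\times\{l,r\})$.

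The core of the argument treats $B\cap(I\times I)$ by contradiction. Exhausting $I$ by compacts, $B':=B\cap(K\times K)$ is not trivially covered for some compact $K\subseteq I$, so by Proposition~\ref{pr:polarDescrClassical} there is $P\in\Pi(\mu,\nu)$ with $P(B')>0$; let $\rho:=P|_{B'}\neq0$, concentrated on $B'$, with marginals $\rho_{X}\leq\mu$, $\rho_{Y}\leq\nu$ supported in $K$ and disintegration $\rho=\rho_{X}\otimes\tau$. I would build $Q\in\cM(\mu,\nu)$ with $Q(B')>0$: for small $\lambda>0$ set $\lambda(x):=\lambda\,\tfrac{d\rho_{X}}{d\mu}(x)$, let $\bar\tau(x)$ be the barycenter of $\tau(x)$, put $g(x):=\bigl(x-\lambda(x)\bar\tau(x)\bigr)/(1-\lambda(x))$, and define $\mu'':=(1-\lambda(\cdot))\mu$, $\hat\mu:=g_{*}\mu''$, $\nu'':=\nu-\lambda\rho_{Y}\geq0$. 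Then $\hat\mu,\nu''$ share mass and barycenter, and choosing $\theta\in\cM(\hat\mu,\nu'')$ and setting $\kappa(x):=\lambda(x)\tau(x)+(1-\lambda(x))\,\theta(g(x),\cdot)$, the kernel $\kappa(x)$ has barycenter $x$ by construction of $g$, while $Q:=\mu\otimes\kappa$ has marginals $\mu$ and $\lambda\rho_{Y}+\nu''=\nu$; hence $Q\in\cM(\mu,\nu)$ and $Q(B')\geq\int\lambda(x)\,\tau(x,B'_{x})\,\mu(dx)=\lambda\rho_{X}(\R)>0$ since $\tau(x)$ is carried by $B'_{x}$ — contradicting polarity of $B$.

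The step I expect to be the main obstacle is verifying $\cM(\hat\mu,\nu'')\neq\emptyset$, i.e.\ (Proposition~\ref{pr:convexOrder}) that $\hat\mu\leq_{c}\nu''$, for $\lambda$ small; this is where \emph{irreducibility} enters decisively through $u_{\nu}-u_{\mu}>0$ on $I$. One has $u_{\nu''}=u_{\nu}-\lambda u_{\rho_{Y}}$, and $\hat\mu$ differs from $\mu$ only by an $O(\lambda)$ modification supported, for $\lambda$ small, in a fixed compact $K'\subseteq I$ together with a mass defect; matching of mass and barycenter forces $u_{\hat\mu}-u_{\mu}=u_{\nu''}-u_{\nu}$ off $K'$, so $u_{\hat\mu}-u_{\nu''}\leq0$ there, while on $K'$ one needs $u_{\hat\mu}-u_{\nu''}\leq0$: on compact subsets of $I$ the gap $u_{\nu}-u_{\mu}$ is bounded below by a positive constant and dominates the $O(\lambda)$ perturbation, and near $l,r$ the gap vanishes only to first order with slopes $2\nu(\{l\})$, $2\nu(\{r\})$ (Remark~\ref{rk:irredAtoms}), which matches the first-order behaviour of the perturbation and suffices because $\lambda\rho_{Y}\leq\nu$. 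The endpoint slices $B\cap(\R\times\{l,r\})$ — which can only fail to be trivially covered when $\nu(\{l\})>0$ or $\nu(\{r\})>0$ — are handled by the same device, routing a small fraction of $\mu$ restricted to a compact subset of $I$ onto the endpoint and correcting the barycenter as above, again using the linear vanishing of $u_{\nu}-u_{\mu}$ there. The measurable-selection issues behind the disintegration $\rho=\rho_{X}\otimes\tau$ and the choice of $\theta$ are routine and would be dealt with in passing.
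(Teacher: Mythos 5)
Your proof is correct, and the overall architecture coincides with the paper's: reduce to irreducible components via Proposition~\ref{pr:decomp}, and on an irreducible component show that any set charged by some element of $\Pi(\mu,\nu)$ is charged by some element of $\cM(\mu,\nu)$, by martingale-izing a small multiple of a sub-transport supported on a compact rectangle and then verifying a perturbed convex-order relation (strict positivity of $u_\nu-u_\mu$ on compacts, plus Remark~\ref{rk:irredAtoms} at boundary atoms) so that the remainder can be filled in by Strassen's theorem. The one genuine difference is the device used to restore the martingale property. The paper (Lemma~\ref{le:dominatingMartingaleLaw}) keeps the source points fixed and corrects the barycenter of the kernel $\kappa(x)$ by mixing in $\nu$-mass from compact sets $B_\pm\subseteq J$ sitting to the left and right of $K$; this yields a martingale measure $\pi'\gg\pi$ with marginals $\mu'\leq\mu$, $\nu'\leq\nu$, and the convex-order check is the clean inequality $u_\mu-\eps u_{\mu'}\leq u_\nu-\eps u_{\nu'}$, which moreover gives the stronger conclusion that $\pi$ is \emph{dominated} by some $P\in\cM(\mu,\nu)$ (Corollary~\ref{co:polarSetsIrred}). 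You instead keep the target kernel $\tau(x)$ and displace the source point to $g(x)$, which pushes the correction onto the first marginal $\hat\mu=g_*\mu''$; this is equally valid (your identity $u_{\hat\mu}-u_\mu=u_{\nu''}-u_\nu$ off $K'$ checks out, using $\rho_X(\R)=\rho_Y(\R)$ and the matching barycenters, and $d\rho_X/d\mu\leq1$ keeps $g-\mathrm{id}=O(\lambda)$), but the potential-function computation for a pushforward is a bit heavier than for the paper's additive perturbation, and you only obtain "charges $B'$" rather than absolute-continuity domination — which suffices for the theorem. Your separate treatment of the endpoint slices $\R\times\{l,r\}$ corresponds to case (b) of the paper's lemma; note only that there the identity $u_{\hat\mu}-u_\mu=u_{\nu''}-u_\nu$ fails between $K'$ and the endpoint (the two sides have opposite signs), so the requirement is $2\lambda\rho_Y(\{r\})(r-x)\lesssim u_\nu(x)-u_\mu(x)$, which your appeal to the slope $2\nu(\{r\})$ handles after shrinking $\lambda$ by a further constant factor.
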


The main step in the proof is the following construction.

\begin{lemma}\label{le:dominatingMartingaleLaw}
  Let $\mu\leq_{c}\nu$ be irreducible and let $\pi$ be a finite measure on~$\R^{2}$ whose marginals $\pi_{1},\pi_{2}$ satisfy\footnote{By $\pi_{1}\leq \mu$ we mean that $\pi_{1}(A)\leq \mu(A)$ for every Borel set $A\subseteq \R$.} $\pi_{1}\leq \mu$ and $\pi_{2}\leq\nu$. Then, there exists $P\in\cM(\mu,\nu)$ such that $P$ dominates $\pi$ in the sense of absolute continuity.
\end{lemma}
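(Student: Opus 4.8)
The plan is to produce a single martingale transport $P \in \cM(\mu,\nu)$ that simultaneously dominates $\pi$ and has full marginals $\mu, \nu$. The strategy is to transport only a small ``copy'' of $\pi$ honestly and absorb the remaining mass using the irreducibility. First I would normalize: since $\pi_1 \le \mu$ and $\pi_2 \le \nu$, for small $\eps > 0$ the measure $\eps\pi$ has marginals $\le \eps\mu$ and $\le \eps\nu$; the residual marginals $\mu - \eps\pi_1$ and $\nu - \eps\pi_2$ still carry almost all the mass of $\mu, \nu$. The key difficulty is that $\eps\pi$ is generally not a martingale measure, so its disintegration kernel has barycenter $b(x) \ne x$. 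I would fix this by a ``barycenter-correction'' step: enlarge $\eps\pi$ to a measure $\pi' \ge \eps\pi$, still with marginals dominated by $\mu$ and $\nu$ respectively, whose conditional barycenters equal $x$. Concretely, disintegrate $\eps\pi = (\eps\pi_1)\otimes\kappa$; for $\mu$-a.e.\ $x$ the kernel $\kappa(x,\cdot)$ lives inside $J$ (after removing a polar/nullset), so one can add to $\kappa(x,\cdot)$ a correction measure supported on $\{l,r\}$ (or on points of $J$ near the barriers) pulling the barycenter back to $x$; because $\mu \le_c \nu$ is irreducible, $u_\mu < u_\nu$ on $I$, which gives exactly the slack in the potential functions needed to perform this correction while keeping the second marginal $\le \nu$. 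This yields $\pi' \in \cM(\mu',\nu')$ with $\eps\pi \le \pi'$, $\mu' := \pi'_1 \le \mu$, $\nu' := \pi'_2 \le \nu$.

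Next I would handle the complement. Set $\bar\mu = \mu - \mu'$ and $\bar\nu = \nu - \nu'$. These are finite measures of equal mass (both total masses are $\mu(\R) - \pi'(\R^2)$) and equal barycenter (since $\mu, \nu, \mu', \nu'$ all have the matching first moments forced by the martingale property). The remaining task is to check $\bar\mu \le_c \bar\nu$, equivalently $u_{\bar\mu} \le u_{\bar\nu}$ by Proposition \ref{pr:convexOrder}. This is where the bulk of the work lies: subtracting measures does not respect convex order in general, so I would need to arrange the correction step above carefully — quantitatively, choosing $\eps$ small and keeping the mass of $\pi'$ concentrated where $u_\nu - u_\mu$ is bounded below — so that $u_\mu - u_{\mu'} \le u_\nu - u_{\nu'}$, i.e.\ $u_{\bar\mu} \le u_{\bar\nu}$, continues to hold. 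The strict inequality $u_\mu < u_\nu$ on the open component $I$, together with Remark \ref{rk:irredAtoms} controlling the behavior at the endpoints $l, r$, provides the margin. Once $\bar\mu \le_c \bar\nu$ is established, Proposition \ref{pr:convexOrder} (or Proposition \ref{pr:decomp}) furnishes some $\bar P \in \cM(\bar\mu, \bar\nu)$.

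Finally I would set $P := \pi' + \bar P$. By construction $P$ has first marginal $\mu' + \bar\mu = \mu$ and second marginal $\nu' + \bar\nu = \nu$, and it is a martingale transport because it is a sum of two martingale transports (the barycenter of the $x$-section of $P$ is the $\mu'(x)$- and $\bar\mu(x)$-weighted average of the barycenters $x$ and $x$, hence $x$). Moreover $P \ge \pi' \ge \eps\pi$, so $P$ dominates $\pi$ in the sense of absolute continuity, as $\eps\pi$ and $\pi$ have the same null sets. The main obstacle, as indicated, is the barycenter-correction construction and the verification that it preserves the convex-order inequality for the residuals; the irreducibility hypothesis is used precisely to obtain the strict gap $u_\mu < u_\nu$ on $I$ that makes this correction feasible, and one should expect to invoke Remark \ref{rk:irredAtoms} to deal with atoms of $\nu$ sitting at the endpoints of $I$.
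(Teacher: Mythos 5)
Your overall strategy is the one the paper follows: correct the barycenters of (a small multiple of) $\pi$ by adding mass on either side, check that the residual marginals remain in convex order using the strict gap $u_\mu<u_\nu$ on $I$, and fill in with an arbitrary element of $\cM(\bar\mu,\bar\nu)$ furnished by Proposition~\ref{pr:convexOrder}. Two steps, however, do not work as written. First, the correction measure cannot be ``supported on $\{l,r\}$'': the endpoints may be infinite, and even when finite they need not be atoms of $\nu$, so adding point masses there (or at any non-atoms of $J$) destroys the domination $\nu'\le\nu$ that you need for $\bar\nu=\nu-\nu'$ to be a nonnegative measure. The correct choice is to add to the kernel suitable multiples of $\nu|_{B_-}$ and $\nu|_{B_+}$, where $B_\pm\subseteq J$ are compact sets of positive $\nu$-mass lying strictly to the left and to the right of the support of $\pi_1$; then the second marginal of the correction is automatically controlled by a multiple of $\nu$.

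Second, for this to be feasible --- both for $B_\pm$ to exist on either side of every $x$ in the support of $\pi_1$ with the correction scalars bounded, and for the convex-order verification, which needs $u_\nu-u_\mu$ bounded away from zero on the second-coordinate support of $\pi'$ so that a single small $\eps$ works --- you must first reduce to the case where $\pi$ is supported on a compact rectangle $K\times L\subseteq I\times J$. You gesture at this (``keeping the mass of $\pi'$ concentrated where $u_\nu-u_\mu$ is bounded below'') but give no mechanism for a general $\pi$, whose support may be all of $I\times J$. The fix is routine but necessary: exhaust $I\times J$ by compact rectangles $Q_n$, write $\pi=\sum_n\pi^n$ with $\pi^n$ supported in $Q_n$, produce $P^n\gg\pi^n$ for each $n$ by the compact-rectangle argument, and set $P=\sum_n 2^{-n}P^n$. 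Your treatment of endpoint atoms of $\nu$ via Remark~\ref{rk:irredAtoms} is the right idea and matches the paper's handling of that case.
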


\begin{proof} 
  Let $(I,J)$ be the domain of $(\mu,\nu)$.  We may assume that $\mu,\nu$ are probability measures; in particular, $I\neq\emptyset$.
  
  (i) We first show the result under the additional hypothesis that $\pi$ is supported on an compact rectangle $K\times L\subseteq I\times J$.
  
  Writing $I=(l,r)$, the definition of $(I,J)$ implies that $\nu$ assigns positive mass to any neighborhood of $l$. Since $K$ is compact, it has positive distance to $l$ and we can find a compact set $B_{-}\subseteq J$ with $\nu(B_{-})>0$ to the left of $K$; i.e., $l<y<x$ for all $y\in B_{-}$ and $x\in K$. Similarly, we can find a compact $B_{+}\subseteq J$ with positive mass to the right of $K$. 
  Let 
  $$
    \pi=\pi_{1} \otimes \kappa 
  $$
  be a disintegration of $\pi$; we may choose a version of the kernel $\kappa(x,dy)$ that is concentrated on $L$ for all $x\in K$. We shall now change the mean of $\kappa(x)$ such as to render it a martingale kernel. Indeed,  let us introduce a kernel $\kappa'$ of the form
  $$
    \kappa'(x,dy) = \frac{\kappa(x,dy) + s_{-}(x) \nu(dy)|_{B_{-}}+ s_{+}(x) \nu(dy)|_{B_{+}} }{ c(x)},\quad x\in K.
  $$
  Here $c(x)\geq1$ is the normalizing constant such that $\kappa'(x,dy)$ is a stochastic kernel. Moreover, for $x$ such that the mean of $\kappa (x)$ is smaller or equal to $x$, we set $s_{-}(x):=0$ and define $s_{+}(x)$ as the unique nonnegative scalar such that the mean of $\kappa'(x)$ equals $x$, and analogously in the opposite case. Note that $s_{\pm}$ are well-defined because $B_{\pm}$ is at a positive distance to the left (resp.\ right) of $x\in K$. Then,
  $$
    \pi':= \frac{\nu(B_{-})\wedge \nu(B_{+})}{3} \pi_{1}\otimes \kappa'
  $$
  is a martingale measure with $\pi'\gg \pi$ and its marginals  $\mu',\nu'$ satisfy $\mu'\leq \pi_{1}\leq\mu$ as well as $\nu'\leq \nu$; the latter is due to $\pi_{1}(\R)\leq\mu(\R)=1$ and
  $$
  \kappa'(x) \leq \nu(B_{-})^{-1}\, \nu|_{B_{-}} + \nu(B_{+})^{-1}\, \nu|_{B_{+}} + \kappa(x)
  $$
  and
  $$
   \pi_{1}\otimes \nu|_{B_{-}}\; + \;\pi_{1}\otimes  \nu|_{B_{+}}\; + \;\pi_{1}\otimes\kappa\;\leq \; 3\nu.
  $$
  We also note that 
  \begin{equation}\label{eq:piPrimeConcentrated}
  \mbox{$\pi'$  is concentrated on a compact square $K\times L'$}
  \end{equation}
  where $L'\subseteq J$ is the convex set generated by $B_{-}$ and $B_{+}$. 
  It remains to find $P\in\cM(\mu,\nu)$ such that $P\gg \pi'$. 
  
  (a) We first consider the case where $I=J$. Since $u_{\nu}-u_{\mu}$ is continuous and strictly positive on $I$, this difference is uniformly bounded away from zero on the compact set $L'\subseteq I$. On the other hand, the continuous function $u_{\nu'}-u_{\mu'}$ is uniformly bounded on $L'$. Hence, there is $0<\eps <1$ such that 
  $$
    u_{\mu} - \eps u_{\mu'} \leq u_{\nu} - \eps u_{\nu'}\quad \mbox{on} \quad L',
  $$
  but then this inequality extends to the whole of $\R$ because $u_{\mu'}=u_{\nu'}$ outside of $L'$, due to~\eqref{eq:piPrimeConcentrated}. Noting also that $u_{\mu} - \eps u_{\mu'}=u_{\mu- \eps \mu'}$, we thus have
  $$
    \mu - \eps \mu' \leq_{c} \nu - \eps \nu',
  $$
  and these are nonnegative measures due to $\mu'\leq \mu$ and  $\nu'\leq \nu$. Hence, $\cM(\mu - \eps \mu',\nu - \eps \nu')$ is nonempty; cf.\ Proposition~\ref{pr:convexOrder}. Let~$\pi_{\eps}$ be any element of that set and define
  $$
    P = \eps\mu'(\R)^{-1} \pi' + \pi_{\eps}.
  $$
  By construction, $P$ is an element of $\cM(\mu,\nu)$ and $P\gg\pi'\gg\pi$.
    
  (b) Next, we discuss the case where $\nu$ has an atom at one or both of the endpoints of $I$. Suppose that $\nu(\{r\})>0$; then $L'$ may touch the right boundary of $J$ and we need to give a different argument for the existence of $\eps>0$ as above, since $u_{\nu}-u_{\mu}$ need no longer be bounded away from zero on $L'$. However, the left derivatives satisfy $d^{-}u_{\nu}(r)<d^{-} u_{\mu}(r)$ by Remark~\ref{rk:irredAtoms}, and similarly at $l$ if $\nu(\{l\})>0$. Recalling that the derivatives of any potential function---and in particular of $u_{\mu'}$ and $u_{\nu'}$---are uniformly bounded by the total mass of the corresponding measure, we see that we can still find $\eps>0$ such that $u_{\mu} - \eps u_{\mu'} \leq u_{\nu} - \eps u_{\nu'}$. The rest is as above.
  
  (ii) Finally, we treat the general case. As $\pi_{1}\leq \mu$ and $\pi_{2}\leq\nu$, the measure~$\pi$ is necessarily concentrated on $I\times J$. We can cover $I\times J$ with a sequence $(Q_{n})_{n\geq1}$ of compact rectangles $Q_{n}\subseteq I\times J$ and define measures~$\pi^{n}$ supported by $Q_{n}$ such that $\pi=\sum \pi^{n}$. For each $n$, our construction in (i) yields a martingale transport plan $\pi^{n} \ll P^{n}\in \cM(\mu,\nu)$, and then $P=\sum 2^{-n}P^{n}$ satisfies the requirement of the lemma.
\end{proof}

\begin{corollary}\label{co:polarSetsIrred}
  The pair $\mu\leq_{c}\nu$ is irreducible if and only if the $\Pi(\mu,\nu)$-polar sets and the $\cM(\mu,\nu)$-polar sets coincide.
\end{corollary}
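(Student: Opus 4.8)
The plan is to establish the two implications separately, relying only on the structural results of Section~\ref{se:prelimConvexOrder}, the classical polarity description of Proposition~\ref{pr:polarDescrClassical}, and the domination Lemma~\ref{le:dominatingMartingaleLaw}. I would deliberately \emph{not} invoke Theorem~\ref{th:polarDescr}, since the proof of that theorem should itself rest on the present corollary, applied to the irreducible components of Proposition~\ref{pr:decomp}. Note at the outset that $\cM(\mu,\nu)\subseteq\Pi(\mu,\nu)$, so every $\Pi(\mu,\nu)$-polar set is automatically $\cM(\mu,\nu)$-polar; only the reverse inclusion is at issue. We may also assume that $\mu$ is not concentrated at a single point (if $\mu=\nu=\delta_{c}$ then $\cM(\mu,\nu)=\Pi(\mu,\nu)$ and there is nothing to discuss).

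Suppose first that $\mu\leq_{c}\nu$ is irreducible; I want to show that every Borel set $B$ that is not $\Pi(\mu,\nu)$-polar is not $\cM(\mu,\nu)$-polar. Pick $\pi\in\Pi(\mu,\nu)$ with $\pi(B)>0$ and consider the restriction $\pi|_{B}$; its marginals satisfy $(\pi|_{B})_{1}\leq\mu$ and $(\pi|_{B})_{2}\leq\nu$. By Lemma~\ref{le:dominatingMartingaleLaw} --- this is the one place irreducibility is used --- there is $P\in\cM(\mu,\nu)$ with $\pi|_{B}\ll P$; since $\pi|_{B}(B)=\pi(B)>0$, it follows that $P(B)>0$, so $B$ is not $\cM(\mu,\nu)$-polar. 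This is the soft direction.

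For the converse, assume the two polarity notions coincide, and let $(I_{k})_{1\leq k\leq N}$, $I_{0}$, $\mu_{k}$, $\nu_{k}$, $J_{k}$ be as in Proposition~\ref{pr:decomp}. Since every $P\in\cM(\mu,\nu)$ decomposes as $P=\sum_{k}P_{k}$ with $P_{0}$ concentrated on $\Delta_{0}\subseteq\Delta$ and $P_{k}$ concentrated on $I_{k}\times J_{k}$ for $k\geq1$, the Borel set $S:=\bigl(\Delta\cup\bigcup_{k\geq1}(I_{k}\times J_{k})\bigr)^{c}$ is $\cM(\mu,\nu)$-polar, hence by hypothesis $\Pi(\mu,\nu)$-polar. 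Proposition~\ref{pr:polarDescrClassical} then yields sets $A,A'\subseteq\R$ with $\mu(\R\setminus A)=\nu(\R\setminus A')=0$ and $A\times A'\subseteq S^{c}=\Delta\cup\bigcup_{k\geq1}(I_{k}\times J_{k})$. Now I analyse fibres. If some $x\in A$ lay in $I_{0}$, then for every $y\in A'$ the point $(x,y)$ could only sit on $\Delta$, forcing $A'\subseteq\{x\}$, hence $\nu=\nu(\R)\delta_{x}$ and, by the convex order, $\mu=\nu=\mu(\R)\delta_{x}$ --- excluded. Thus $A\subseteq\bigcup_{k\geq1}I_{k}$, i.e.\ $\mu(I_{0})=0$, which is the mass condition in Definition~\ref{de:irred}. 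Suppose next that $N\geq2$. Each $I_{k}$ satisfies $\mu(I_{k})>0$ (otherwise $\mu_{k}=\nu_{k}=0$ and $I_{k}=\{u_{\mu_{k}}<u_{\nu_{k}}\}=\emptyset$), so choose $x_{0}\in A\cap I_{1}$; the fibre over $x_{0}$ forces $A'\subseteq J_{1}$, so $\nu$ is concentrated on $J_{1}$. But $\nu_{2}\leq\nu$ has mass $\mu(I_{2})>0$ and is not a Dirac --- $\nu_{2}=m\delta_{y_{0}}$ would force $\mu_{2}=m\delta_{y_{0}}$ and hence $I_{2}=\{u_{\mu_{2}}<u_{\nu_{2}}\}=\emptyset$ --- whereas $J_{1}\cap J_{2}\subseteq\bar{I}_{1}\cap\bar{I}_{2}$ contains at most one point; therefore $\nu_{2}(\R\setminus J_{1})>0$ and $\nu(\R\setminus J_{1})>0$, a contradiction. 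Hence $N\leq1$, so $\{u_{\mu}<u_{\nu}\}$ is connected and $\mu\leq_{c}\nu$ is irreducible.

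I expect the last step of the converse to be the only genuine obstacle: one must know that the pieces $\nu_{k}$ ``essentially do not overlap'', equivalently that no $\nu_{k}$ reduces to a point mass, and this uses the sharpening $I_{k}=\{u_{\mu_{k}}<u_{\nu_{k}}\}$ from Proposition~\ref{pr:decomp} (the strict convex order on each component). The only delicate bookkeeping is the handling of $\nu$-atoms at the endpoints of the $I_{k}$, which is exactly the reason $J_{k}$ may be strictly larger than $I_{k}$.
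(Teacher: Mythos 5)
Your proof is correct, and its forward implication (irreducibility implies coincidence, via Lemma~\ref{le:dominatingMartingaleLaw} applied to $\pi|_{B}$) is exactly the paper's. The converse is where you genuinely diverge. The paper argues contrapositively with a one-line witness: if the pair is not irreducible then (modulo degenerate cases it does not spell out) there is a barrier point $x$ with $u_{\mu}(x)=u_{\nu}(x)$ and $\mu(-\infty,x)>0$, $\mu(x,\infty)>0$, and the single set $(-\infty,x)\times(x,\infty)$ is $\cM(\mu,\nu)$-polar by Proposition~\ref{pr:decomp} but not $\Pi(\mu,\nu)$-polar by Proposition~\ref{pr:polarDescrClassical}. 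You instead assume the polar sets coincide, feed the canonical $\cM(\mu,\nu)$-polar set $\bigl(\Delta\cup\bigcup_{k\geq1}I_{k}\times J_{k}\bigr)^{c}$ into Proposition~\ref{pr:polarDescrClassical}, and extract irreducibility from a fibrewise analysis of $A\times A'$; this is in effect a hand-made version of the ``only if'' half of Theorem~\ref{th:polarDescr}, and the individual steps (positivity of $\mu(I_{k})$, non-Diracness of each $\nu_{k}$ via $I_{k}=\{u_{\mu_{k}}<u_{\nu_{k}}\}$, the at-most-one-point overlap of adjacent $J_{k}$'s) all check out. Your instinct not to invoke Theorem~\ref{th:polarDescr} is right, since the paper derives that theorem from this corollary. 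What the paper's route buys is brevity; what yours buys is that you never need the (slightly delicate, and left implicit in the paper) claim that failure of irreducibility produces a barrier point carrying $\mu$-mass on both sides. The price is that you must exclude the case $\mu=\nu=\delta_{c}$ by hand --- which is in fact a (trivial) counterexample to the statement as literally written, since there $I=\emptyset$ violates the mass condition of Definition~\ref{de:irred} while the two polarity notions obviously agree; both proofs tacitly set this aside, but you at least flag it.
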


\begin{proof}
  If $\mu\leq_{c}\nu$ is irreducible, the conclusion is an immediate consequence of the preceding lemma.  
  Conversely, suppose that $\mu\leq_{c}\nu$ is not irreducible; that is, there exists $x\in\R$ such that $u_{\mu}(x)=u_{\nu}(x)$ and $\mu(-\infty,x)>0$ and $\mu(x,\infty)>0$. Then, the set $(-\infty,x)\times (x,\infty)$ is $\cM(\mu,\nu)$-polar (Proposition~\ref{pr:decomp}) but not $\Pi(\mu,\nu)$-polar (Proposition~\ref{pr:polarDescrClassical}).
\end{proof}

\begin{proof}[Proof of Theorem~\ref{th:polarDescr}]
  By Proposition~\ref{pr:decomp} and Corollary~\ref{co:polarSetsIrred}, a Borel set $B$ is $\cM(\mu,\nu)$-polar if and only if $B\cap (I_{k}\times J_{k})$ is $\Pi(\mu_{k},\nu_{k})$-polar for all $k\geq1$ and $B\cap \Delta$ is $P_{0}$-null. The result now follows by applying Proposition~\ref{pr:polarDescrClassical} for each $k\geq1$.
\end{proof}

\section{A Generalized Integral}\label{se:generalizedIntegral}

\subsection{Integral of a Concave Function}

Let $\mu\leq_{c}\nu$ be irreducible with domain $(I,J)$ and let $\chi: J\to \R$ be a concave function\footnote{In fact, we will not need irreducibility for the results of this section, except for  Example~\ref{ex:finiteIntegralSuffCondAtoms} and Remark~\ref{rk:atomsEstimate}. Moreover, we could allow $\chi$ to take the value $-\infty$ on $J\setminus I$.}. We assume that $I\neq\emptyset$. Our first aim is to define the difference $\mu(\chi)-\nu(\chi)$. Indeed, $\mu(\chi)$ and $\nu(\chi)$ are well defined in $[-\infty,\infty)$ as $\chi^{+}$ has linear growth, but we need to elaborate on the difference. There are (at least) three natural definitions, and we shall see that they all yield the same value. To that end, note that $\chi$ is continuous on $I$ by concavity, but may have downward jumps at the boundary $J\setminus I$. We denote the absolute magnitude of the jump at $y$ by $|\Delta\chi(y)|$.

\begin{enumerate}
	\item[(1)] 
	\emph{Approximation.} Let $I_{n}$ be a sequence of open, bounded intervals increasing strictly to $I$ (i.e., $I\setminus I_{n}$ has two components for all $n$) and consider the concave, linearly growing functions $\chi_{n}: J\to \R$ defined by the following conditions: $\chi_{n}=\chi$ on $I_{n}$ and on $J\setminus I$, whereas $\chi_{n}$ is affine on each component of $I\setminus I_{n}$, with continuous first derivative at the endpoints of $I_{n}$. Then, $\mu(\chi_{n})$ and $\nu(\chi_{n})$ are both finite and we set
	\begin{equation}\label{eq:defIntegralByApprox}
	  \cI_{1}(\chi,\mu-\nu) := \lim_{n\to\infty} [\mu(\chi_{n}) - \nu(\chi_{n})].
	\end{equation}
	We shall see below that the limit exists in $[0,\infty]$.
	
	\item[(2)]
	\emph{Integration by Parts.} Let $-\chi''$ be the (locally finite) second derivative measure of the convex function $-\chi$ on $I$ and set
	\begin{equation}\label{eq:defIntegralByIP}
	  \cI_{2}(\chi,\mu-\nu) := \frac12 \int_{I} (u_{\mu} - u_{\nu}) \,d\chi'' \, +   \int_{J\setminus I} |\Delta\chi| \,d\nu.
	\end{equation}
  As $u_{\mu} \leq u_{\nu}$ and $\chi<\infty$, this quantity is well defined in $[0,\infty]$.
  
  \item[(3)]
  \emph{Disintegration.} Fix an arbitrary $P\in\cM(\mu,\nu)$ and consider a disintegration $P=\mu\otimes \kappa$; then we have $\int \chi(y)\, \kappa(x,dy)\leq \chi(x)$ for $\mu$-a.e.\ $x\in I$ by Jensen's inequality. Thus,
	$$
	  \cI_{3}(\chi,\mu-\nu) := \int_{I} \left[\chi(x) - \int_{J} \chi(y)\,\kappa(x,dy)\right]	\mu(dx)
	$$
	is well defined in $[0,\infty]$, and we shall see below that this value is independent of the choice of $P\in\cM(\mu,\nu)$. This definition was already used in~\cite{BeiglbockJuillet.12}.
\end{enumerate}

For future reference, let us recall the following fact about the second derivative measure $\chi''$: after normalizing $\chi$ and its left derivative $\chi'$ such that  $\chi(a)=\chi'(a)=0$ for some $a\in I$ (by adding a suitable affine function),
$$
  \chi(y) = 
     \int_{(l,a)} (y-t)^{-} \, \chi''(dt) + \int_{[a,r)} (y-t)^{+} \, \chi''(dt),\quad y\in I,
$$
where $l,r\in [-\infty,\infty]$ are such that $I=(l,r)$. If $\chi$ is continuous at the boundary of $I$, this identity extends to $y\in J$ by monotone convergence.

\begin{lemma}\label{le:intConcaveWelldef}
  The values $\cI_{i}(\chi,\mu-\nu)$ are well defined in $[0,\infty]$, depend only on $\chi$ and $\mu-\nu$, and coincide for $i=1,2,3$. 
\end{lemma}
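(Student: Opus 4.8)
The plan is to show the chain of equalities $\cI_1 = \cI_2 = \cI_3$ together with well-definedness, exploiting that all three quantities only involve $\chi''$ on $I$, the jumps $|\Delta\chi|$ on $J\setminus I$, and the pair $\mu-\nu$ (equivalently, $u_\nu - u_\mu$). First I would handle $\cI_2$: it is manifestly well defined in $[0,\infty]$ and depends only on $\chi$ and $u_\nu - u_\mu$ (hence only on $\chi$ and $\mu-\nu$), since $\chi''$, $|\Delta\chi|$, and $u_\mu, u_\nu$ are all determined by that data; moreover it is non-negative because $u_\mu \le u_\nu$ on $I$ and $|\Delta\chi| \ge 0$. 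So $\cI_2$ serves as the reference value, and it suffices to prove $\cI_1 = \cI_2$ and $\cI_3 = \cI_2$.

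For $\cI_1 = \cI_2$: I would apply the integration-by-parts identity recorded just before the lemma. Write $\chi_n$ as in definition~(1). Using the representation of a concave function via its second derivative measure and the fact that $\int |t-x|\,\mu(dt) = u_\mu(x)$, one gets, for each $n$, the exact identity
$$
  \mu(\chi_n) - \nu(\chi_n) = \tfrac12 \int_I (u_\mu - u_\nu)\, d\chi_n'' + \int_{J\setminus I} |\Delta \chi_n|\, d\nu .
$$
This is a finite-measure Fubini computation that is legitimate because $\chi_n$ has linear growth and $\mu,\nu$ have finite first moment. Now $\chi_n$ agrees with $\chi$ on $I_n$ and on $J\setminus I$, so $\chi_n'' = \chi''$ on $I_n$, $|\Delta\chi_n| = |\Delta\chi|$, and on $I\setminus I_n$ the measure $\chi_n''$ is non-negative (the affine-interpolation construction with matched one-sided derivatives keeps $\chi_n$ concave, hence $-\chi_n$ convex). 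Since $u_\mu - u_\nu \le 0$, the contribution of $I\setminus I_n$ is $\le 0$, and it is also bounded below — here one uses that $u_\nu - u_\mu$ vanishes (with matching derivatives) at the endpoints of $I$, so its size on $I \setminus I_n$ is controlled and the total mass of $\chi_n''$ on the two outer pieces is finite. Monotone convergence of $\tfrac12\int_{I_n}(u_\mu - u_\nu)\,d\chi'' \uparrow \tfrac12\int_I (u_\mu - u_\nu)\,d\chi''$, together with the vanishing of the error term, gives $\cI_1 = \cI_2$ and in particular existence of the limit in $[0,\infty]$.

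For $\cI_3 = \cI_2$: fix $P = \mu\otimes\kappa \in \cM(\mu,\nu)$. The integrand $\chi(x) - \int \chi(y)\,\kappa(x,dy)$ is the Jensen gap of the concave function $\chi$ at the martingale kernel $\kappa(x)$, which by the second-derivative representation equals $\int |x-t|\,\kappa(x,dt)\,\text{-type}$ expressions integrated against $\chi''$, plus a boundary-jump term; concretely, for a probability $\rho$ with barycenter $x$ one has $\chi(x) - \rho(\chi) = \tfrac12\int (|x-t| - u_\rho(t))\,d\chi''(t)$-style identities on $I$ plus $\int_{J\setminus I}|\Delta\chi|\,d\rho$. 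Integrating in $x$ against $\mu(dx)$ and using $\int u_{\kappa(x)}(t)\,\mu(dx) = u_\nu(t)$ and $\int |x-t|\,\mu(dx) = u_\mu(t)$, together with Tonelli (all integrands non-negative, so no integrability issue), collapses the double integral to exactly $\tfrac12\int_I(u_\mu - u_\nu)\,d\chi'' + \int_{J\setminus I}|\Delta\chi|\,d\nu = \cI_2$. Independence of the choice of $P$ is then automatic, since the value equals $\cI_2$, which does not mention $P$.

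The main obstacle I anticipate is the careful bookkeeping at the boundary $J \setminus I$ and at the endpoints of $I$: I must make sure the jump terms $|\Delta\chi|$ are correctly produced by the integration-by-parts / Jensen-gap identities (the representation quoted before the lemma only extends to $y \in J$ when $\chi$ is continuous there, so the jump contributions need separate, honest treatment), and that the $I\setminus I_n$ remainder in step~(1) genuinely vanishes — which is exactly where the convex-order strictness on $I$ and the derivative-matching of $u_\nu - u_\mu$ at $\partial I$ (Remark~\ref{rk:irredAtoms}) enter. The Fubini/Tonelli applications are routine given finite first moments and non-negativity, so those are not the difficulty.
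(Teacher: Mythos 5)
Your proposal is correct and follows essentially the same route as the paper: the same second-derivative/Fubini identity linking $\mu(\chi_{n})-\nu(\chi_{n})$ to $\tfrac12\int_{I}(u_{\mu}-u_{\nu})\,d\chi_{n}''$ plus the jump term, followed by a monotone passage to the limit, and your $\cI_{3}=\cI_{2}$ step is just the paper's Fubini computation reorganized through the disintegration $P=\mu\otimes\kappa$ (using $\int u_{\kappa(x)}\,\mu(dx)=u_{\nu}$). Two small corrections to your bookkeeping: on $I\setminus I_{n}$ the measure $\chi_{n}''$ vanishes identically ($\chi_{n}$ is affine there and the derivative is matched at $\partial I_{n}$), so there is no remainder to bound — the identity $\int_{I}(u_{\mu}-u_{\nu})\,d\chi_{n}''=\int_{I_{n}}(u_{\mu}-u_{\nu})\,d\chi''$ is exact; and $|\Delta\chi_{n}|$ need not equal $|\Delta\chi|$ (the tangent-line extension lies above $\chi$, so $|\Delta\chi_{n}|\geq|\Delta\chi|$), though $|\Delta\chi_{n}|\to|\Delta\chi|$, which is all that is needed.
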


\begin{proof}
  By concavity, $\chi$ is continuous on $I$ with possible downward jumps at the boundary. Setting $\bar\chi:=\chi$ on $I$ and extending $\bar\chi$ to $J$ by continuity, we have $\chi = \bar\chi - |\Delta \chi|\1_{J\setminus I}$ where $\bar \chi$ is concave and continuous. By linearity of the $\nu$-integral, it suffices to show the claim for $\bar\chi$; in other words, we may assume that $\chi$ is continuous.

  Suppose first that $\chi\in L^{1}(\mu)\cap L^{1}(\nu)$. Then, it is clear that $\cI_{i}(\chi,\mu-\nu)$ is well defined for $i=1,2,3$ and that $\cI_{1}(\chi,\mu-\nu)=\cI_{3}(\chi,\mu-\nu)$. To see the equality with $\cI_{2}(\chi,\mu-\nu)$, let $a\in I$ be arbitrary. Writing again $I=(l,r)$, we have
  \begin{align*}
    \int_{J} \chi(s) \, (\mu-\nu)(ds) 
  	& =\int_{[l,a)}\int_{(l,a)}(t-s)^+ \,\chi''(dt)\,(\mu-\nu)(ds) \\
	&\phantom{=}\;+ \int_{[a,r]}\int_{[a,r)}(s-t)^+ \,\chi''(dt)\,(\mu-\nu)(ds). %
	\end{align*}	
Applying Fubini's theorem to both integrals and noting that the integrands vanish on certain sets, this can be rewritten as
  $$
    \int_{(l,a)}\int_J(t-s)^+ \,(\mu-\nu)(ds)\,\chi''(dt) \\
	+ \int_{[a,r)}\int_J(s-t)^+ \,(\mu-\nu)(ds)\,\chi''(dt).
	$$
	Substituting $(t-s)^{+} = (s-t)^{+} + t - s$ in the first integral and using that $\mu$ and $\nu$ have the same mass and mean, this equals 
	$$
    \int_{I}\int_J(s-t)^+ \,(\mu-\nu)(ds)\,\chi''(dt).
	$$
	On the other hand, using $|s-t|=2(s-t)^{+} - (s-t)$ yields that
  $$
    (u_{\mu} - u_{\nu})(t) = \int_{J} |s-t|\,(\mu-\nu)(ds) = 2 \int_{J} (s-t)^{+}\,(\mu-\nu)(ds).
  $$
  It follows that $\cI_{2}(\chi,\mu-\nu)=\cI_{3}(\chi,\mu-\nu)$ and that this value depends only on $\chi$ and $\mu-\nu$.
  
  For general $\chi$, define $\chi_{n}\in L^{1}(\mu)\cap L^{1}(\nu)$ as before~\eqref{eq:defIntegralByApprox}; the above establishes that the values of $\cI_{i}(\chi_{n},\mu-\nu)$ coincide for each $n$. Noting that $\chi_{n}$ decreases to $\chi$ stationarily and that $\chi_{n+1}-\chi_{n}$ is concave, monotone convergence entails that $\cI_{i}(\chi_{n},\mu-\nu)\to \cI_{i}(\chi,\mu-\nu)$ for $i=2,3$, and in particular these limits coincide. It now follows that the limit defining $\cI_{1}(\chi,\mu-\nu)$ must exist and have the same value.
\end{proof}

\begin{definition}\label{de:diffInt}
  We write $(\mu-\nu)(\chi)$ for the common value of $\cI_{i}(\chi,\mu-\nu)$, $i=1,2,3$.
\end{definition}

As the notation suggests, we have $(\mu-\nu)(\chi)= \mu(\chi) - \nu(\chi)$ as soon as at least one of the latter integrals is finite---this follows from the representation~\eqref{eq:defIntegralByApprox}. However, it may happen that $(\mu-\nu)(\chi)$ is finite but $\mu(\chi)=\nu(\chi)=-\infty$. The following remark elaborates on this.

\begin{remark}\label{rk:finiteIntegralSuffCond}
	Suppose that $(\mu-\nu)(\chi)$ is finite; then $\mu(\chi)$ and $\nu(\chi)$ are either both infinite or both finite. Thus, one sufficient condition for their finiteness is that the support of $\mu$ be a compact subset of $I$. A more general condition is the existence of a constant $C\geq1$ such that
	\begin{equation}\label{eq:finiteIntegralSuffCond}
	  u_{\nu} - u_{\delta_{m}} \leq C(u_{\nu} - u_{\mu}),
	\end{equation}
	where $m\in I$ is the barycenter of $\mu$. Indeed, by~\eqref{eq:defIntegralByIP}, this implies that
	$$
	  (\delta_{m}-\nu)(\chi) \leq  C (\mu-\nu) (\chi);
	$$
	thus, $\nu(\chi)>-\infty$ if the right-hand side is finite. One can formulate a similar sufficient condition by substituting $\delta_{m}$ with any measure $\bar\mu$ satisfying $\bar\mu\leq_{c}\nu$ and $\bar\mu(\chi)>-\infty$.
\end{remark}

\begin{example}\label{ex:finiteIntegralSuffCondGaussian}
  Let $\mu,\nu$ be Gaussian with the same mean and variances $\sigma^{2}_{\mu}< \sigma^{2}_{\nu}$. Then, a direct computation shows that $\mu\leq_{c}\nu$ is irreducible and Condition~\eqref{eq:finiteIntegralSuffCond} is satisfied.
\end{example}

It turns out that atoms at the endpoints of $I$ are helpful in terms of integrability.

\begin{example}\label{ex:finiteIntegralSuffCondAtoms}
  Suppose that $\mu\leq_{c}\nu$ is irreducible with domain $(I,J)$. If $I$ is bounded and $\nu$ has atoms at both endpoints of $I$, then~\eqref{eq:finiteIntegralSuffCond} is satisfied. Indeed, $u_{\nu} > u_{\mu}$ on $I$ and the slopes are separated at the endpoints (cf.\ Remark~\ref{rk:irredAtoms}) so that $(u_{\nu}-u_{\delta})/(u_{\nu}-u_{\mu})$ has a positive limit at the boundary. Using these two facts, \eqref{eq:finiteIntegralSuffCond} follows.
\end{example}

Very much in the same spirit, we have the following estimate related to the preceding example. 

\begin{remark}\label{rk:atomsEstimate}
  Let $\mu\leq_{c}\nu$ be irreducible with domain $(I,J)$, let $I$ have a finite right endpoint $r$ and let $\chi: J\to\R$ be a concave function such that $\chi(a)=\chi'(a)=0$, where $a\in I$ is the common barycenter of~$\mu$ and~$\nu$.  In particular, $\chi\leq0$ and $\chi\1_{[a,\infty)}$ is concave. If $\nu$ has an atom at $r$, then
  \begin{equation}\label{eq:atomEstimateForChi}
    \chi(r) \geq -\frac{C}{\nu(\{r\})} \int_{[a,\infty)} \chi \,d(\mu-\nu),
  \end{equation}
  with a constant $C\geq0$ depending only on $\mu,\nu$.
  
  Indeed, as in Example~\ref{ex:finiteIntegralSuffCondAtoms}, Remark~\ref{rk:irredAtoms} implies that there exists $C$ such that~\eqref{eq:finiteIntegralSuffCond} holds on $[a,\infty)$. As a consequence,
  $$
    -\chi(r)\nu(\{r\}) \leq -\int_{[a,\infty)} \chi \,d\nu =  \int_{[a,\infty)} \chi \,d(\delta_{a} -\nu) \leq C \int_{[a,\infty)} \chi \,d(\mu-\nu),
  $$
  where we have applied Lemma~\ref{le:intConcaveWelldef} to $\chi\1_{[a,\infty)}$. 
\end{remark}

\subsection{Integrability Modulo Concave Functions}

Our next aim is to define expressions of the form $\mu(\varphi)+\nu(\psi)$ in a situation where the individual integrals are not necessarily finite. We continue to assume that $\mu\leq_{c}\nu$ is irreducible with domain $(I,J)$.

\begin{definition}\label{de:ModConcaveIntegral}
  Let $\varphi: I\to \overline \R$ and $\psi: J\to \overline \R$ be Borel functions. If there exists a concave function $\chi: J\to \R$ such that $\varphi-\chi \in L^{1}(\mu)$ and  $\psi+\chi \in L^{1}(\nu)$, we say that $\chi$ is a \emph{concave moderator} for $(\varphi,\psi)$ and set
  $$
  \mu(\varphi)+\nu(\psi) := \mu(\varphi-\chi)+\nu(\psi+\chi) + (\mu-\nu)(\chi) \,\in (-\infty,\infty],
  $$
  where $(\mu-\nu)(\chi)$ was introduced in Definition~\ref{de:diffInt}.
\end{definition}

\begin{remark}\label{rk:integralDefIndep}
	The preceding definition is independent of the choice of the concave moderator $\chi$. Indeed, suppose there is another concave function $\bar\chi$ such that  $\varphi-\bar\chi \in L^{1}(\mu)$ and  $\psi+\bar\chi \in L^{1}(\nu)$, then it follows that $\chi-\bar\chi\in L^{1}(\mu)\cap L^{1}(\nu)$. Using, for instance, the representation~\eqref{eq:defIntegralByApprox}, we see that
	$$
	  (\mu-\nu)(\chi) - (\mu-\nu)(\chi-\bar\chi) = (\mu-\nu)(\bar\chi)
	$$
	and now it follows that 
	$$
	  \mu(\varphi-\chi)+\nu(\psi+\chi) + (\mu-\nu)(\chi) = \mu(\varphi-\bar\chi)+\nu(\psi+\bar\chi) + (\mu-\nu)(\bar\chi)
	$$
	as desired.
\end{remark}

\begin{definition}\label{de:ModConcaveL}
  We denote by $L^{c}(\mu,\nu)$ the space of all pairs of Borel functions $\varphi: I\to \overline \R$ and $\psi: J\to \overline \R$ which admit a concave moderator $\chi$ such that $(\mu-\nu)(\chi)<\infty$.
\end{definition}

In particular, $\mu(\varphi)+\nu(\psi)$ is well defined and finite for $(\varphi,\psi)\in L^{c}(\mu,\nu)$, and has the usual value if $(\varphi,\psi)\in L^{1}(\mu)\times L^{1}(\nu)\subseteq L^{c}(\mu,\nu)$.

The following sanity check confirms that $\mu(\varphi)+\nu(\psi)$ has the good value in the context of martingale transport.

\begin{remark}\label{rk:integralOfSuperhedge}
  Let $(\varphi,\psi)\in L^{c}(\mu,\nu)$ and let $h: I\to\R$ be Borel. If $\varphi(x) + \psi(y) + h(x)(y-x)$ is bounded from below on $I\times J$, then
  $$
    \mu(\varphi)+\nu(\psi) = P[\varphi(X) + \psi(Y) + h(X)(Y-X)] 
  $$
  for any $P\in\cM(\mu,\nu)$.
\end{remark}

\begin{proof}
  Let $\chi$ be a concave moderator for $(\varphi,\psi)$. We may suppose that $0$ is a lower bound, so that
  $$
    (\varphi- \chi)(X) + (\psi + \chi)(Y) + \chi(X)- \chi(Y)+ h(X)(Y-X) \geq 0.
  $$
  As the first two terms are $P$-integrable, the negative part of the remaining expression is $P$-integrable and
  $P[\varphi(X) + \psi(Y) + h(X)(Y-X)]$ equals
  $$
    \mu(\varphi- \chi) + \nu(\psi + \chi) + P[\chi(X)- \chi(Y)+ h(X)(Y-X)].
  $$  
  Let $P=\mu\otimes \kappa$ be a disintegration of $P$; then by the linear growth of~$\chi^{+}$, the following integrals are well defined and equal,
  $$
    \int [\chi(x)- \chi(y)+ h(x)(y-x)] \,\kappa(x,dy) = \int [\chi(x)- \chi(y)] \,\kappa(x,dy)
  $$
  for $\mu$-a.e.\ $x\in I$. As the negative part of $\chi(X)- \chi(Y)+ h(X)(Y-X)$ is $P$-integrable, Fubini's theorem (for kernels) yields 
  $$
    P[\chi(X)- \chi(Y)+ h(X)(Y-X)] = \iint [\chi(x)- \chi(y)] \,\kappa(x,dy)\, \mu(dx)
  $$
  and the right-hand side equals $(\mu-\nu)(\chi)$ by Lemma~\ref{le:intConcaveWelldef}.
\end{proof}

\section{Closedness on an Irreducible Component}\label{se:closednessOnIrred}

In this section, we analyze the dual problem on a single component; that is, we continue to assume that $\mu\leq_{c}\nu$ is irreducible with domain $(I,J)$. 

\begin{definition}\label{de:dualDomainPointw}
  Let $f: I\times J \to [0,\infty]$. We denote by $\cD^{c,pw}_{\mu,\nu}(f)$ the set of all Borel functions $(\varphi,\psi,h): \R\to\overline\R \times \overline\R \times \R$ such that $(\varphi,\psi)\in L^{c}(\mu,\nu)$ and 
  $$
    \varphi(x) + \psi(y) + h(x)(y-x) \geq f(x,y),\quad (x,y)\in I\times J.
  $$
  Moreover, we denote by $\cD^{1,pw}_{\mu,\nu}(f)$ the subset of all $(\varphi,\psi,h)\in\cD^{c,pw}_{\mu,\nu}(f)$ with $\varphi\in L^{1}(\mu)$ and $\psi\in L^{1}(\nu)$.
\end{definition}

We emphasize that in this definition, the inequality is stated in the pointwise (``pw'') sense. For later reference, we also note that there are two degrees of freedom in the choice of $(\varphi,\psi,h)$. Namely, given constants $c_{1},c_{2}\in\R$, the triplet $(\varphi,\psi,h)$ belongs to $\cD^{c,pw}_{\mu,\nu}(f)$ if and only if the the triplet
\begin{equation}\label{eq:affineNormalization}
 \tilde\varphi(x)=\varphi(x)+c_{1}+c_{2}x,\quad \tilde\psi(y)=\psi(y)-c_{1}-c_{2}y,\quad \tilde h(x)=h(x)+c_{2}
\end{equation}
does, and then $\mu(\varphi)+\nu(\psi) = \mu(\tilde\varphi)+\nu(\tilde\psi)$.

The goal of the present section is the following closedness result for $\cD^{c,pw}_{\mu,\nu}(f)$; it is at the very heart of our duality and existence theory.

\begin{proposition}\label{pr:closednessIrred}
   Suppose that  $\mu\leq_{c}\nu$ is irreducible with domain $(I,J)$, let~$f,f_{n}: I\times J \to [0,\infty]$ be such that $f_{n}\to f$ pointwise and let $(\varphi_{n},\psi_{n},h_{n})\in \cD^{c,pw}_{\mu,\nu}(f_{n})$ satisfy $\sup_{n} \{\mu(\varphi_{n})+\nu(\psi_{n})\}<\infty$. Then, there exist
   $$
     (\varphi,\psi,h)\in \cD^{c,pw}_{\mu,\nu}(f) \quad\mbox{such that}\quad \mu(\varphi)+\nu(\psi)\leq \liminf_{n\to\infty} \{\mu(\varphi_{n})+\nu(\psi_{n})\}.
   $$
\end{proposition}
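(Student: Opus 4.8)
The plan is to realize the program outlined in the introduction: manufacture concave functions $\chi_{n}$ that dominate $-\psi_{n}$ and are in turn dominated by the affine pieces coming from $(\varphi_{n},h_{n})$, extract a limit $\chi$ by an Arzel\`a--Ascoli argument resting on the bound~\eqref{eq:introCompactness}, and then recover $(\varphi,\psi)$ from $(\varphi_{n},\psi_{n})$ by a Komlos-type argument and $h$ a posteriori. Passing to a subsequence, I first assume $\mu(\varphi_{n})+\nu(\psi_{n})\to L:=\liminf_{n}\{\mu(\varphi_{n})+\nu(\psi_{n})\}$, which lies in $[0,\infty)$. Let $\chi_{n}$ be (essentially) the least concave majorant of $-\psi_{n}$ on $J$; a short verification shows it is a concave moderator for $(\varphi_{n},\psi_{n})$ with $-\psi_{n}\leq\chi_{n}\leq\varphi_{n}(x)+h_{n}(x)(\cdot-x)$ on $J$ for every $x\in I$, so that $\varphi_{n}-\chi_{n}\geq0$, $\psi_{n}+\chi_{n}\geq0$, and, by Definition~\ref{de:ModConcaveIntegral},
\[
  (\mu-\nu)(\chi_{n})\ \leq\ \mu(\varphi_{n}-\chi_{n})+\nu(\psi_{n}+\chi_{n})+(\mu-\nu)(\chi_{n})\ =\ \mu(\varphi_{n})+\nu(\psi_{n})\ \leq\ C,
\]
with $C:=\sup_{n}\{\mu(\varphi_{n})+\nu(\psi_{n})\}<\infty$. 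Using the two free parameters in~\eqref{eq:affineNormalization} (which leave $\varphi_{n}-\chi_{n}$, $\psi_{n}+\chi_{n}$ and $(\mu-\nu)(\chi_{n})$ unchanged), I normalize $\chi_{n}(a)=\chi_{n}'(a)=0$, where $a\in I$ is the common barycenter of $\mu,\nu$; then $\chi_{n}\leq0$ and both $\chi_{n}\1_{(-\infty,a]}$ and $\chi_{n}\1_{[a,\infty)}$ are concave.

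Next I extract a limit of $(\chi_{n})$. By the integration-by-parts formula~\eqref{eq:defIntegralByIP}, $\tfrac12\int_{I}(u_{\nu}-u_{\mu})\,d(-\chi_{n}'')\leq C$; since $u_{\nu}-u_{\mu}$ is continuous and strictly positive on $I$, it is bounded below on every compact $K\Subset I$, so $(-\chi_{n}'')(K)$ is bounded uniformly in $n$, and with $\chi_{n}'(a)=0$, $\chi_{n}(a)=0$ this bounds $\chi_{n}$ and its local Lipschitz constant on compact subsets of $I$. Arzel\`a--Ascoli gives a subsequence with $\chi_{n}\to\chi$ locally uniformly on $I$, $\chi$ concave and $\leq0$; at a finite endpoint of $I$ that is an atom of $\nu$, Remark~\ref{rk:atomsEstimate} together with $\int_{[a,\infty)}\chi_{n}\,d(\mu-\nu)\leq(\mu-\nu)(\chi_{n})\leq C$ (and its mirror) bounds $\chi_{n}$ there, so after a further subsequence $\chi_{n}\to\chi$ pointwise on all of $J$. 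Finally, a lower-semicontinuity step---writing $(\mu-\nu)(\chi_{n})=\int_{I}\bigl[\chi_{n}(x)-\int_{J}\chi_{n}\,d\kappa(x)\bigr]\mu(dx)$ for a disintegration $P=\mu\otimes\kappa$ of some fixed $P\in\cM(\mu,\nu)$, and combining reverse Fatou in the inner integral (the integrand is $\leq0$) with Fatou in the outer one---yields $(\mu-\nu)(\chi)\leq\liminf_{n}(\mu-\nu)(\chi_{n})\leq C<\infty$.

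Now I produce $(\varphi,\psi)$. The sequences $(\varphi_{n}-\chi_{n})$ and $(\psi_{n}+\chi_{n})$ are nonnegative and bounded in $L^{1}(\mu)$ resp.\ $L^{1}(\nu)$, so Komlos' theorem lets me pass to a subsequence $(n_{k})$ along which the Ces\`aro means $\bar\varphi_{m}-\bar\chi_{m}\to g_{\mu}$ $\mu$-a.e.\ and $\bar\psi_{m}+\bar\chi_{m}\to g_{\nu}$ $\nu$-a.e., where $\bar\varphi_{m}:=\tfrac1m\sum_{k\leq m}\varphi_{n_{k}}$ and similarly $\bar\psi_{m},\bar h_{m},\bar\chi_{m},\bar f_{m}$; here $g_{\mu},g_{\nu}\geq0$ are finite a.e., $(\bar\varphi_{m},\bar\psi_{m},\bar h_{m})\in\cD^{c,pw}_{\mu,\nu}(\bar f_{m})$ by convexity of the dual constraint, $\bar f_{m}\to f$ pointwise, $\bar\chi_{m}\to\chi$ (locally uniformly on $I$, pointwise on $J$), and $(\mu-\nu)(\bar\chi_{m})=\tfrac1m\sum_{k\leq m}(\mu-\nu)(\chi_{n_{k}})$ by linearity of $\chi\mapsto(\mu-\nu)(\chi)$ on concave functions, visible from~\eqref{eq:defIntegralByIP}. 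As $\mu(\bar\varphi_{m}-\bar\chi_{m})+\nu(\bar\psi_{m}+\bar\chi_{m})+(\mu-\nu)(\bar\chi_{m})=\tfrac1m\sum_{k\leq m}[\mu(\varphi_{n_{k}})+\nu(\psi_{n_{k}})]\to L$ with nonnegative summands, a last subsequence makes these converge to $a,b,c\geq0$ with $a+b+c=L$. A separate observation handles $\{f=\infty\}$: if $B:=\{f=\infty\}\cap(I\times J)$ were not $\Pi(\mu,\nu)$-polar, Corollary~\ref{co:polarSetsIrred} would give $P\in\cM(\mu,\nu)$ with $P(B)>0$, and then Remark~\ref{rk:integralOfSuperhedge} and Fatou would force $\mu(\varphi_{n})+\nu(\psi_{n})\geq P(f_{n})\to P(f)=\infty$, contradicting $C<\infty$; so by Proposition~\ref{pr:polarDescrClassical} there are Borel nullsets $N_{\mu},N_{\nu}$ with $B\subseteq(N_{\mu}\times J)\cup(I\times N_{\nu})$. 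Setting $D_{\mu}:=\{g_{\mu}\ \mathrm{finite}\}\setminus N_{\mu}$, $D_{\nu}:=\{g_{\nu}\ \mathrm{finite}\}\setminus N_{\nu}$, I define $\varphi:=g_{\mu}+\chi$ on $D_{\mu}$ (and $+\infty$ elsewhere on $I$) and $\psi:=g_{\nu}-\chi$ on $D_{\nu}$ (and $+\infty$ elsewhere on $J$). Then $\chi$ is a concave moderator for $(\varphi,\psi)$ with $(\mu-\nu)(\chi)<\infty$, hence $(\varphi,\psi)\in L^{c}(\mu,\nu)$ and, by Definition~\ref{de:ModConcaveIntegral} with Fatou and the previous paragraph, $\mu(\varphi)+\nu(\psi)=\mu(g_{\mu})+\nu(g_{\nu})+(\mu-\nu)(\chi)\leq a+b+c=L$.

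It remains to find $h$ and check the pointwise inequality. For $x\in D_{\mu}$, pick $y_{-}<x<y_{+}$ in $D_{\nu}$ (possible since $x$ is interior to the convex hull of $\supp\nu$); evaluating $\bar\varphi_{m}(x)+\bar\psi_{m}(y)+\bar h_{m}(x)(y-x)\geq\bar f_{m}(x,y)\geq0$ at $y=y_{\mp}$ and letting $m\to\infty$ (using $\bar\varphi_{m}(x)\to\varphi(x)$, $\bar\psi_{m}(y_{\mp})\to\psi(y_{\mp})$, all finite) shows $(\bar h_{m}(x))_{m}$ is bounded, so I set $h(x):=\limsup_{m}\bar h_{m}(x)\in\R$, and $h(x):=0$ for $x\notin D_{\mu}$; this $h$ is Borel. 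For $(x,y)\in I\times J$ with $x\in D_{\mu}$, $y\in D_{\nu}$ one has $f(x,y)<\infty$, and passing to the limit along a subsequence realizing $h(x)$ in $\bar\varphi_{m}(x)+\bar\psi_{m}(y)+\bar h_{m}(x)(y-x)\geq\bar f_{m}(x,y)$ gives $\varphi(x)+\psi(y)+h(x)(y-x)\geq f(x,y)$; for $x\notin D_{\mu}$ or $y\notin D_{\nu}$ the left-hand side is $+\infty$. Thus $(\varphi,\psi,h)\in\cD^{c,pw}_{\mu,\nu}(f)$ with $\mu(\varphi)+\nu(\psi)\leq L$, as required. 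I expect the main obstacle to be the second paragraph: obtaining the a priori bound, turning it into genuine compactness of $(\chi_{n})$ with controlled behaviour at the endpoints of $I$ where $u_{\nu}-u_{\mu}$ degenerates, and proving lower semicontinuity of $(\mu-\nu)(\cdot)$ along the resulting convergence; a secondary technical point is the bookkeeping in the first paragraph that makes $\chi_{n}$ a bona fide concave moderator.
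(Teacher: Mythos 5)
Your proposal is correct and follows the same overall strategy as the paper's proof: concave moderators $\chi_{n}$ sandwiched between $-\psi_{n}$ and the affine functions $\varphi_{n}(x)+h_{n}(x)(\cdot-x)$, an Arzel\`a--Ascoli/lower-semicontinuity step for $(\chi_{n})$ (the paper's Lemma~\ref{le:passageToChi} and Proposition~\ref{pr:concaveCompactness}), Komlos for $(\varphi_{n},\psi_{n})$, and an a posteriori construction of $h$. The execution differs in three harmless ways: you take the least concave majorant of $-\psi_{n}$ rather than $\inf_{x}[\varphi_{n}(x)+h_{n}(x)(\cdot-x)]$ (these are sandwiched between each other, so both work); you prove lower semicontinuity of $(\mu-\nu)(\cdot)$ via the disintegration formula $\cI_{3}$ and (reverse) Fatou, and control the boundary atoms via Remark~\ref{rk:atomsEstimate}, where the paper uses weak convergence of the second-derivative measures and the jump bound $|\Delta\chi_{n}|\leq C$; and you recover $h(x)$ as a subsequential limit of $\bar h_{m}(x)$ (with boundedness obtained by testing against $y_{\pm}\in D_{\nu}$ on either side of $x$), where the paper takes the left derivative of the concave envelope $[f(x,\cdot)-\psi]^{conc}$. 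One caveat: the ``short verification'' that the least concave majorant is a bona fide moderator, i.e.\ that $\varphi_{n}-\chi_{n}\in L^{1}(\mu)$, $\psi_{n}+\chi_{n}\in L^{1}(\nu)$ and $(\mu-\nu)(\chi_{n})<\infty$, is not a triviality following from the sandwich inequalities alone --- it is exactly the content of the paper's Lemma~\ref{le:passageToChi} and requires the disintegration/Fubini computation leading to~\eqref{eq:canonicalModeratorIntegrable}; that argument applies verbatim to your $\chi_{n}$, so the step is fillable, but it should be supplied. Finally, your detour through $\{f=\infty\}$ and Proposition~\ref{pr:polarDescrClassical} presumes $\{f=\infty\}$ is Borel, which the hypotheses do not guarantee; fortunately it is also redundant, since for $x\in D_{\mu}$, $y\in D_{\nu}$ one always has $\liminf_{m}\bar f_{m}(x,y)\geq f(x,y)$ and the finiteness of the limit of the left-hand side already rules out $f(x,y)=\infty$ there.
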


The irreducible pair $\mu\leq_{c}\nu$ is fixed for the rest of this section, so let us simplify the notation to 
$$
  \cD^{c}(f):=\cD^{c,pw}_{\mu,\nu}(f).
$$
As a first step towards the proof of Proposition~\ref{pr:closednessIrred}, we introduce concave functions which will control simultaneously $\varphi_{n}$ and $\psi_{n}$, in the sense of one-sided bounds.

\begin{lemma}\label{le:passageToChi}
  Let $(\varphi, \psi,h)\in \cD^{c}(0)$. Then, there exists a concave moderator $\chi: J\to \R$ for $(\varphi, \psi)$ such that 
  $$
    \chi\leq\varphi \mbox{ on } I,\quad -\chi \leq \psi \mbox{ on } J.
  $$
  In particular, $\mu(\chi)+\nu(-\chi)\leq \mu(\varphi)+\nu(\psi)$.
\end{lemma}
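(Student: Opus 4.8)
The plan is to build $\chi$ explicitly as the concave envelope of $\varphi$ on $I$ — or, what amounts to the same in our setting, the largest concave minorant of $\varphi$ — and then check that the dual inequality forces $-\chi\le\psi$ as well. Start from $(\varphi,\psi,h)\in\cD^{c}(0)$, so that $\varphi(x)+\psi(y)+h(x)(y-x)\ge 0$ on $I\times J$; fixing $x$ and varying $y$ shows $\psi(y)\ge -\varphi(x)-h(x)(y-x)$, i.e. $-\psi$ lies below the affine map $y\mapsto \varphi(x)+h(x)(y-x)$ for every $x\in I$. Thus $-\psi$ is minorized on $J$ by the function
$$
  \chi(y) := \inf_{x\in I}\,\bigl[\varphi(x)+h(x)(y-x)\bigr],\qquad y\in J,
$$
which is concave as an infimum of affine functions. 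By construction $-\chi\le\psi$ on $J$. Taking $y=x$ in the infimum gives $\chi(x)\le\varphi(x)$ on $I$, which is the first desired bound. One must check $\chi>-\infty$, i.e. that the infimum is not identically $-\infty$: this is where the hypothesis $(\varphi,\psi)\in L^{c}(\mu,\nu)$ enters, since it provides \emph{some} concave moderator $\chi_0$ with $\varphi-\chi_0\in L^1(\mu)$, hence $\varphi\ge\chi_0+(\text{something }\mu\text{-integrable})$, and combined with a lower bound on $\psi$ coming from $L^1(\nu)$ one argues that the affine functions $y\mapsto\varphi(x)+h(x)(y-x)$ cannot all dive to $-\infty$; alternatively, since $-\chi\le\psi$ and $\psi<\infty$ $\nu$-a.e. on the dense set $I\subseteq J$, concavity of $\chi$ propagates finiteness to all of $I$, and then to $J$ up to the boundary jumps.

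Next I would verify that $\chi$ is a \emph{concave moderator} for $(\varphi,\psi)$ in the sense of Definition~\ref{de:ModConcaveIntegral}, i.e. $\varphi-\chi\in L^1(\mu)$ and $\psi+\chi\in L^1(\nu)$. For the first: $0\le\varphi-\chi$ on $I$, and comparing with the given moderator $\chi_0$, the function $\varphi-\chi$ is sandwiched, $0\le\varphi-\chi\le\varphi-\chi_0 + (\chi_0-\chi)$; the linear-growth bound on the concave functions $\chi,\chi_0$ (and the fact that both are $\le\varphi$ while $\varphi-\chi_0\in L^1(\mu)$ has finite mean) lets one dominate $\varphi-\chi$ by an $L^1(\mu)$ function, after using the two affine degrees of freedom in \eqref{eq:affineNormalization} to fix a common normalization. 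For the second: $0\le\psi+\chi$ on $J$, and $\psi+\chi\le\psi+\chi_0+(\chi-\chi_0)$ where $\chi-\chi_0$ is a difference of two linearly-growing concave functions, hence in $L^1(\nu)$ once one knows $\chi-\chi_0\in L^1(\mu)\cap L^1(\nu)$ (the two concave functions differ by a bounded-variation-in-slope correction; use that $\nu$ has finite first moment). The cleanest route is: first establish $\chi-\chi_0\in L^1(\mu)$ from $0\le\varphi-\chi\le\varphi-\chi_0$ and $\chi_0\le\varphi$, then note a difference of two linearly-growing concave functions that is $\mu$-integrable is automatically $\nu$-integrable because $(\mu-\nu)(\cdot)$ of a concave function is nonnegative and finite exactly when the function is integrable against either marginal (Remark~\ref{rk:finiteIntegralSuffCond}), so integrability transfers.

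Finally, the inequality $\mu(\chi)+\nu(-\chi)\le\mu(\varphi)+\nu(\psi)$: using Definition~\ref{de:ModConcaveIntegral} with the moderator $\chi$ itself, the left side is $\mu(\chi-\chi)+\nu(-\chi+\chi)+(\mu-\nu)(\chi)=(\mu-\nu)(\chi)$, while the right side is $\mu(\varphi-\chi)+\nu(\psi+\chi)+(\mu-\nu)(\chi)$; since $\varphi-\chi\ge0$ $\mu$-a.e. and $\psi+\chi\ge0$ $\nu$-a.e., the claimed inequality follows immediately (in fact with room to spare). The main obstacle I anticipate is purely the finiteness/integrability bookkeeping — showing $\chi$ is real-valued and a genuine concave moderator, i.e. that passing to the concave envelope does not destroy the $L^c$ membership — rather than the inequality itself, which is formal. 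The affine normalization \eqref{eq:affineNormalization} and the characterization of $(\mu-\nu)(\chi)$ via Lemma~\ref{le:intConcaveWelldef} are the tools that make this bookkeeping go through.
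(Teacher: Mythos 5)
Your construction of $\chi(y)=\inf_{x\in I}[\varphi(x)+h(x)(y-x)]$ is exactly the paper's, as are the verifications that $\chi\le\varphi$ on $I$, $-\chi\le\psi$ on $J$, and that $\chi$ is real-valued (your second alternative --- finiteness of $\psi$ $\nu$-a.e.\ plus concavity propagating finiteness to $I$ and then to the atoms in $J\setminus I$ --- is the paper's argument). The final inequality is also handled the same way. The real content of the lemma, however, is the verification that this $\chi$ is a concave moderator with $(\mu-\nu)(\chi)<\infty$, and there your route has a genuine gap. You propose to compare $\chi$ with the given moderator $\chi_0$, but (a) you use ``the fact that both are $\le\varphi$'', which is false for $\chi_0$: a concave moderator only satisfies $\varphi-\chi_0\in L^1(\mu)$ and need not minorize $\varphi$ anywhere; and (b) you invoke the principle that a $\mu$-integrable difference of concave functions is automatically $\nu$-integrable, citing Remark~\ref{rk:finiteIntegralSuffCond}. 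That remark says something different (if $(\mu-\nu)(\chi)<\infty$ then $\mu(\chi)$ and $\nu(\chi)$ are simultaneously finite or infinite), and the principle itself fails in general: a concave function can diverge to $-\infty$ near the boundary of $I$ in a way invisible to $\mu$ but not to $\nu$. Note also that the paper's Remark~\ref{rk:integralDefIndep}, which does give $\chi-\bar\chi\in L^1(\mu)\cap L^1(\nu)$, assumes \emph{both} functions are already moderators --- using it here would be circular.

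The paper closes this gap differently, and the step is worth internalizing. Write the dual inequality as $\bar\varphi(x)+\bar\psi(y)+[\chi(x)-\chi(y)]+h(x)(y-x)\ge 0$ with $\bar\varphi=\varphi-\chi\ge0$, $\bar\psi=\psi+\chi\ge0$, take any $P=\mu\otimes\kappa\in\cM(\mu,\nu)$, and integrate first in $\kappa(x,dy)$: the martingale property kills the $h$ term, Jensen makes $\int[\chi(x)-\chi(y)]\,\kappa(x,dy)\ge0$, so all three surviving terms are nonnegative and Fubini applies term by term, yielding $\mu(\bar\varphi)+\nu(\bar\psi)+(\mu-\nu)(\chi)=P[\varphi(X)+\psi(Y)+h(X)(Y-X)]$. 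The right-hand side equals $\mu(\varphi)+\nu(\psi)<\infty$ by Remark~\ref{rk:integralOfSuperhedge} (this is the only place the existence of $\chi_0$ is used), and finiteness of a sum of three nonnegative terms forces each to be finite --- which is precisely the statement that $\chi$ is a concave moderator with $(\mu-\nu)(\chi)<\infty$. Your comparison-with-$\chi_0$ strategy would need this disintegration argument (or an equivalent substitute) to be repaired.
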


\begin{proof}
  The function 
  $$
    \chi(y) := \inf_{x\in I} \,[\varphi(x) + h(x)(y-x)],\quad y\in J
  $$
  is concave as an infimum of affine functions, and $(\varphi,\psi)\in L^{c}(\mu,\nu)$ implies that $\varphi<\infty$ on a nonempty set, so that $\chi<\infty$ everywhere on $J$. Moreover, we clearly have $\chi\leq \varphi$ on $I$. Our assumption that 
  \begin{equation}\label{eq:zeroSuperhedge}
    \varphi(x)+\psi(y)+h(x)(y-x) \geq 0,\quad (x,y)\in I\times J
  \end{equation}
  shows that $\chi \geq -\psi$ on $J$. Since $(\varphi,\psi)\in L^{c}(\mu,\nu)$, the set $\{\psi < \infty\}$ is dense in $\supp(\nu)$, 
  and by concavity it follows that $\chi>-\infty$ on the interior of the convex hull of $\supp(\nu)$; that is, on the interval $I$. Moreover, $\{\psi<\infty\}$ must contain any atom of $\nu$ and in particular $J\setminus I$, so that $\chi>-\infty$ on $J$.
  
  Setting $\bar\varphi:=\varphi - \chi \geq 0$ and $\bar\psi:=\psi+\chi \geq 0$, we can write~\eqref{eq:zeroSuperhedge} as
  $$
    \bar\varphi(x) + \bar\psi(y) + [\chi(x)-\chi(y)] + h(x)(y-x) \geq0, \quad (x,y)\in I\times J.
  $$
  Let  $P = \mu \otimes \kappa$ be a disintegration of some $P\in\cM(\mu,\nu)$. For fixed $x\in I$, all four terms above are bounded from below by linearly growing functions. It follows that for $\mu$-a.e.\ $x\in I$, the integral of the left-hand side with respect to $\kappa(x,dy)$ can be computed term-by-term, which yields 
  $$
    \bar\varphi(x) + \int \bar\psi(y) \,\kappa(x,dy) + \int [\chi(x)- \chi(y)] \,\kappa(x,dy).
  $$
  These three terms are nonnegative, and thus the integral with respect to $\mu$ can again be computed term-by-term. By Fubini's theorem and Lemma~\ref{le:intConcaveWelldef}, it follows that
  \begin{equation}\label{eq:canonicalModeratorIntegrable}
    P[\bar\varphi(X) + \bar\psi(Y) + [\chi(X)-\chi(Y)] + h(X)(Y-X)] = \mu(\bar\varphi) + \nu(\bar\psi)+ (\mu - \nu)(\chi). 
  \end{equation}
  Of course, the left-hand side is also equal to $P[\varphi(X) + \psi(Y) + h(X)(Y-X)]$ and therefore finite by Remark~\ref{rk:integralOfSuperhedge}. Thus, the right-hand side is finite as well. As a result, $(\bar\varphi,\bar\psi)\in L^{c}(\bar\mu,\bar\nu)$ with concave moderator $\chi$, and 
  $$
    \mu(\varphi)+\nu(\psi) = \mu(\bar\varphi) + \nu(\bar\psi)+ (\mu - \nu)(\chi) \geq (\mu - \nu)(\chi) = \mu(\chi)+\nu(-\chi)
  $$
  as desired.
\end{proof}

Let us record a variant of the preceding construction for later use. 

\begin{remark}\label{rk:reverseIntegrability}
  Let $(\varphi,\psi,h): \R\to  (-\infty,\infty] \times (-\infty,\infty]  \times \R$ be Borel functions such that 
  $$
    \varphi(x) + \psi(y) + h(x)(y-x) \geq 0,\quad (x,y)\in I\times J.
  $$  
  Then, $(\varphi,\psi)\in L^{c}(\mu,\nu)$ if and only if $P[\varphi(X) + \psi(Y) + h(X)(Y-X)]<\infty$ for some (and then all) $P\in\cM(\mu,\nu)$.
\end{remark}

\begin{proof}
  The ``only if'' statement is immediate from Remark~\ref{rk:integralOfSuperhedge}. For the converse, let $P[\varphi(X) + \psi(Y) + h(X)(Y-X)]<\infty$ for some $P\in\cM(\mu,\nu)$; then $\varphi$ is finite $\mu$-a.s.\ and $\psi$ is finite $\nu$-a.s. We can then follow the proof of Lemma~\ref{le:passageToChi} up to~\eqref{eq:canonicalModeratorIntegrable} to define a concave function $\chi: J\to \R$ such that $\bar\varphi:=\varphi - \chi \geq 0$ and $\bar\psi:=\psi+\chi \geq 0$ and
  $$
    P[ \varphi(X) +  \psi(Y)  + h(X)(Y-X)] = \mu(\bar\varphi) + \nu(\bar\psi)+ (\mu - \nu)(\chi). 
  $$
  Since the left-hand side is finite, the three (nonnegative) terms on the right-hand side are finite as well; that is, $(\varphi,\psi)\in L^{c}(\mu,\nu)$ with concave moderator~$\chi$.
\end{proof}

Our second tool for the main result is a compactness principle for concave functions. Irreducibility is crucial for its proof, so let us restate this standing condition. The notation $\chi_{n}'$ refers to the left derivative (say).

\begin{proposition}\label{pr:concaveCompactness}
  Let $\mu\leq_{c}\nu$ be irreducible with domain $(I,J)$ and let $a\in I$ be the common barycenter of $\mu$ and $\nu$. Let $\chi_{n}: J\to \R$ be concave functions such that 
  $$
    \chi_{n}(a)=\chi_{n}'(a)=0\quad \mbox{and}\quad \sup_{n\geq 1} (\mu-\nu)(\chi_{n})<\infty.
  $$
  There exists a subsequence $\chi_{n_{k}}$ which converges pointwise on $J$ to a concave function $\chi: J\to \R$, and $(\mu-\nu)(\chi) \leq \liminf_{k} (\mu-\nu)(\chi_{n_{k}})$.
\end{proposition}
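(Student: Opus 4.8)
The plan is to pass to the second-derivative measures $\rho_n:=-\chi_n''$, run an Arzel\`a--Ascoli argument on compact subintervals of $I$, treat the finitely many boundary points of $J$ separately, and finally read off the lower-semicontinuity of $\chi\mapsto(\mu-\nu)(\chi)$ from the disintegration formula $\cI_3$. First I would record the consequences of the normalization: since $\chi_n$ is concave with $\chi_n(a)=\chi_n'(a)=0$ it lies below its horizontal tangent at $a$, so $\chi_n\le 0$ on $J$, and writing $\rho_n:=-\chi_n''$ for the (nonnegative, locally finite) second-derivative measure on $I$, the integration-by-parts formula~\eqref{eq:defIntegralByIP} reads
$$
  (\mu-\nu)(\chi_n)=\tfrac12\int_I(u_\nu-u_\mu)\,d\rho_n+\int_{J\setminus I}|\Delta\chi_n|\,d\nu,
$$
a sum of two nonnegative terms. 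Hence the standing bound $\sup_n(\mu-\nu)(\chi_n)=:C<\infty$ yields $\sup_n\int_I(u_\nu-u_\mu)\,d\rho_n\le 2C$ together with $\sup_n|\Delta\chi_n(p)|\le C/\nu(\{p\})$ for each $p\in J\setminus I$.

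Next I would establish the compactness. On a compact $[\alpha,\beta]\subset I$ one has $\eps:=\min_{[\alpha,\beta]}(u_\nu-u_\mu)>0$ by irreducibility and continuity, so $\rho_n([\alpha,\beta])\le 2C/\eps$ uniformly in $n$; inserting this into the normalized representation $\chi_n(y)=\int_{(l,a)}(y-t)^-\chi_n''(dt)+\int_{[a,r)}(y-t)^+\chi_n''(dt)$ bounds $\{\chi_n\}$ uniformly on $[\alpha,\beta]$, and concavity then bounds its one-sided derivatives on a slightly smaller interval, so $\{\chi_n\}$ is equi-Lipschitz there. Arzel\`a--Ascoli and a diagonal extraction along an exhausting sequence of compact subintervals of $I$ produce a subsequence $\chi_{n_k}$ converging uniformly on compacts of $I$ to a concave $\chi:I\to\R$. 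For $p\in J\setminus I$ --- necessarily an endpoint of $I$ with $\nu(\{p\})>0$ --- Remark~\ref{rk:irredAtoms} separates the slopes of $u_\mu$ and $u_\nu$ at $p$, and exactly as in Example~\ref{ex:finiteIntegralSuffCondAtoms} this yields a linear lower bound $u_\nu-u_\mu\ge c\,|{\cdot}-p|$ near $p$ with $c>0$; together with $\int_I(u_\nu-u_\mu)\,d\rho_n\le 2C$ and the representation this bounds $\chi_n$ uniformly on $I$ near $p$, and with the jump bound the numbers $\chi_n(p)$ stay bounded. Passing to a further subsequence I may assume convergence also at the (at most two) points of $J\setminus I$, so that $\chi_{n_k}\to\chi$ pointwise on all of $J$, where $\chi$ is real-valued and concave since the three-point concavity inequalities pass to the limit.

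It remains to prove the $\liminf$ inequality, which I would deduce from $\cI_3$. Fix any $P=\mu\otimes\kappa\in\cM(\mu,\nu)$ (nonempty by Proposition~\ref{pr:convexOrder}); then $\mu$ is carried by $I$ and, for $\mu$-a.e.\ $x$, $\kappa(x,\cdot)$ is a probability on $J$ with barycenter $x$. By Lemma~\ref{le:intConcaveWelldef},
$$
  (\mu-\nu)(g)=\int_I\Big[g(x)-\int_J g(y)\,\kappa(x,dy)\Big]\mu(dx)\ \in[0,\infty]
$$
for every concave $g:J\to\R$, the integrand being nonnegative by Jensen's inequality. Since $\chi_{n_k}\le 0$ and $\chi_{n_k}\to\chi$ pointwise on $J$, Fatou's lemma applied to $-\chi_{n_k}\ge 0$ gives $\limsup_k\int_J\chi_{n_k}(y)\,\kappa(x,dy)\le\int_J\chi(y)\,\kappa(x,dy)$ for every $x$, whence $\liminf_k[\chi_{n_k}(x)-\int\chi_{n_k}\,d\kappa(x,\cdot)]\ge\chi(x)-\int\chi\,d\kappa(x,\cdot)$ for $\mu$-a.e.\ $x$; integrating against $\mu$ and using Fatou's lemma once more yields $(\mu-\nu)(\chi)\le\liminf_k(\mu-\nu)(\chi_{n_k})$, as required.

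The main obstacle is the boundary behaviour in the compactness step: the bound on $(\mu-\nu)(\chi_n)$ controls $\rho_n$ only with the weight $u_\nu-u_\mu$, which degenerates at the endpoints of $I$, so a priori mass of $\rho_n$ may escape there. Irreducibility (the strict inequality $u_\mu<u_\nu$ throughout $I$) is what keeps $\{\chi_n\}$ under control on every compact subinterval, while the slope separation of Remark~\ref{rk:irredAtoms} is precisely what is needed to retain control up to an endpoint that is charged by $\nu$; by contrast, at a barrier-type endpoint not in $J$ no such control is available, which is consistent with the failure of a global pointwise formulation discussed in the introduction.
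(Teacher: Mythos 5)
Your proof is correct. The compactness half follows essentially the same route as the paper: both use the integration-by-parts representation $\cI_{2}$ to turn the uniform bound on $(\mu-\nu)(\chi_{n})$ into a bound on the weighted second-derivative measures $(u_{\nu}-u_{\mu})\,d\rho_{n}$, exploit the strict positivity of $u_{\nu}-u_{\mu}$ on compact subsets of $I$ (irreducibility) to get local mass bounds and equi-Lipschitz estimates, and conclude by Arzel\`a--Ascoli; your treatment of the endpoints in $J\setminus I$ via the slope separation of Remark~\ref{rk:irredAtoms} is in fact more explicit than the paper's, which only extracts a subsequence along which the jumps $|\Delta\chi_{n_k}|$ converge and is rather terse about why the values $\chi_{n_k}(p)$ themselves converge at an atom $p$ of $\nu$ --- your linear lower bound $u_{\nu}-u_{\mu}\geq c\,|\cdot-p|$ supplies exactly the missing uniform control there. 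Where you genuinely diverge is the lower-semicontinuity step: the paper stays with $\cI_{2}$, shows that $-\chi_{n}''\to-\chi''$ weakly and approximates the weight $u_{\nu}-u_{\mu}$ from below by compactly supported continuous functions, whereas you switch to the disintegration formula $\cI_{3}$ and apply Fatou twice, using that the normalization forces $\chi_{n}\leq 0$ and that the Jensen integrands are nonnegative. Both are legitimate since Lemma~\ref{le:intConcaveWelldef} identifies the two functionals; your version avoids weak convergence of measures altogether and handles the boundary jump terms automatically (they are absorbed into $\int_{J}\chi\,d\kappa(x,\cdot)$ rather than tracked separately), at the cost of invoking the existence of some $P\in\cM(\mu,\nu)$, which is harmless here.
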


\begin{proof}
  By our assumption, $(\mu-\nu)(\chi_{n})$ is bounded uniformly in $n$. In view of~\eqref{eq:defIntegralByIP}, this implies that there exists a constant $C>0$ such that 
  $$
    0\leq \int_{I} (u_{\mu}-u_{\nu})\, d\chi''_{n} \leq C \quad \mbox{and} \quad  0\leq |\Delta\chi_{n}|\leq C,
  $$
  where we have used that $J\setminus I$ consists of (at most two) atoms of $\nu$ and $|\Delta\chi_{n}|=0$ on $I$. By the same fact, we thus have
  \begin{equation}\label{eq:zeroDeltaLimit}
    \lim_{k} |\Delta\chi_{n_{k}}|  = \liminf_{n} |\Delta\chi_{n}|
  \end{equation}
  for a suitable subsequence $\chi_{n_{k}}$; we may assume that $n_{k}=k$.
  Moreover, the first inequality shows that the sequence of finite measures defined by $(u_{\mu}-u_{\nu})\, d\chi''_{n}$  is bounded and thus relatively compact for the weak topology induced by the compactly supported continuous functions on $I$. Recalling that $u_{\nu}-u_{\mu}$ is continuous and strictly positive on $I$, it follows that $(-\chi''_{n})$ is  relatively weakly compact as well. In view of $\chi_{n}'(a)=0$, this implies a uniform bound for the Lipschitz constant of $\chi_{n}$ on any given compact subset of $I$. Using also $\chi_{n}(a)=0$, the Arzela--Ascoli theorem then yields a function $\chi: I\to\R$ such that $\chi_{n}\to\chi$ locally uniformly, after passing to a subsequence. Clearly $\chi$ is concave, and integration by parts shows that $-\chi''_{n}$ converges weakly to the second derivative measure $-\chi''$ associated with $\chi$.
  Approximating $u_{\mu}-u_{\nu}$ from above with compactly supported continuous functions on $I$, we then see that
  $$
    (\mu-\nu)(\chi) = \frac12 \!\int_{I} (u_{\mu}-u_{\nu})\, d\chi'' \leq \liminf_{n\to\infty} \frac12 \!\int_{I} (u_{\mu}-u_{\nu})\, d\chi''_{n} = \liminf_{n\to\infty} (\mu-\nu)(\chi_{n}).
  $$
  Together with~\eqref{eq:zeroDeltaLimit}, we can define $\chi$ on $J$ and the result follows via~\eqref{eq:defIntegralByIP}.
\end{proof}

We can now derive the main result of this section.

\begin{proof}[Proof of Proposition~\ref{pr:closednessIrred}]
 Since $(\varphi_{n},\psi_{n},h_{n})\in \cD^{c}(f_{n})$ and $f_{n}\geq0$, we can introduce the associated concave functions $\chi_{n}$ as in Lemma~\ref{le:passageToChi}. Normalizing $(\varphi_{n},\psi_{n},h_{n})$ as in~\eqref{eq:affineNormalization}  with suitable constants, we may assume that $\chi_{n}(a)=\chi_{n}'(a)=0$; note that the relations $\chi_{n}\leq \varphi_{n}$ and $-\chi_{n} \leq \psi_{n}$ are preserved. After passing to a subsequence, Proposition~\ref{pr:concaveCompactness} then yields a pointwise limit $\chi: J\to\R$ for the $\chi_{n}$.
  
  Since $\varphi_{n}\geq \chi_{n}\to \chi$, Komlos' lemma (in the form of \cite[Lemma~A1.1]{DelbaenSchachermayer.94} and its subsequent remark) shows that there are $\bar\varphi_{n}\in\conv\{\varphi_{n}, \varphi_{n+1},\dots\}$ which converge $\mu$-a.s., and similarly for $\psi_{n}$. Without loss of generality, we may assume that $\bar\varphi_{n}=\varphi_{n}$, and similarly for~$\psi_{n}$. Thus, setting
  $$
    \varphi:=\limsup \varphi_{n} \quad\mbox{on}\quad I, \quad\quad  \psi:=\limsup \psi_{n} \quad\mbox{on}\quad J
  $$
  yields Borel functions $\varphi$, $\psi$ such that 
  $$
    \varphi_{n}\to\varphi\quad  \mu\as, \quad \varphi-\chi \geq0\quad\mbox{and} \quad  \psi_{n}\to\psi\quad  \nu\as, \quad \psi+\chi \geq0.
  $$
  Fatou's lemma and Proposition~\ref{pr:concaveCompactness} then show that 
  \begin{align*}
    \mu(\varphi-&\chi) + \nu(\psi+\chi) + (\mu-\nu)(\chi) \\
    & \leq \liminf \mu(\varphi_{n}-\chi_{n}) +  \liminf \nu(\psi_{n}+\chi_{n}) + \liminf  (\mu-\nu)(\chi_{n})\\
    & \leq \liminf [ \mu(\varphi_{n}-\chi_{n}) + \nu(\psi_{n}+\chi_{n}) + (\mu-\nu)(\chi_{n}) ] \\
    & = \liminf [ \mu(\varphi_{n}) + \nu(\psi_{n}) ] < \infty.
  \end{align*}
  In particular, this shows that $(\varphi,\psi)\in L^{c}(\mu,\nu)$ with concave moderator $\chi$, and then the above may be stated  more concisely as
  $$
    \mu(\varphi)+\nu(\psi)\leq \liminf \mu(\varphi_{n})+\nu(\psi_{n}).
  $$
  
  It remains to find $h$.  For any function $g: J\to \overline{\R}$, let $g^{conc}: J\to \overline{\R}$ denote the concave envelope. Given a sequence of such functions $g_{n}$, we have
  $$
    \liminf (g_{n}^{conc}) \geq  (\liminf g_{n})^{conc}
  $$
  as $g_{n}^{conc}\geq g_{n}$ and $\liminf g_{n}^{conc}$ is concave. Moreover, $(\varphi_{n},\psi_{n},h_{n})\in \cD^{c}(f_{n})$ means that
  $
    \varphi_{n}(x)+ h_{n}(x)(y-x)\geq f_{n}(x,y) - \psi_{n}(y)
  $ 
  which implies that 
  $$
    \varphi_{n}(x)+ h_{n}(x)(y-x)\geq [f_{n}(x,\cdot)-\psi_{n}]^{conc}(y) ,\quad (x,y)\in I\times J.
  $$
  Fix $x\in I$; then these two facts yield
  \begin{align*}
    \liminf [\varphi_{n}(x)+ h_{n}(x)(y-x)]
    & \geq \liminf [f_{n}(x,\cdot)-\psi_{n}]^{conc}(y) \\
    & \geq [ \liminf  (f_{n}(x,\cdot)-\psi_{n})]^{conc}(y) \\
    & \geq [ f(x,\cdot)-\psi]^{conc}(y) \\
    &=: \hat\varphi(x,y)
  \end{align*}
   for all $y\in J$, and for the specific choice $y=x$ we obtain that
  $$
    \varphi(x) \geq \liminf \varphi_{n}(x) \geq \hat\varphi(x,x).
  $$
  As $\nu\{\psi=\infty\}=0$ and $f>-\infty$, we have $\hat\varphi(x,y)>-\infty$ for all $y\in J$. If $x\notin N:=\{\varphi=\infty\}$, the above inequalities also show that $\hat\varphi(x,x)<\infty$ and as a result, the concave function $\hat\varphi(x,\cdot)$ is finite on $J$ and admits a left derivative 
  $$
   h(y):=d^{-}\hat{\varphi}(x,\cdot)(y)\in\R, \quad y\in I.
  $$
  By concavity, it follows that
  $$
    \varphi(x) + h(x)(y-x) \geq \hat \varphi(x,x)  + h(x)(y-x) \geq \hat \varphi(x,y) \geq f(x,y) - \psi(y)
  $$
  for all $y\in J$. Setting $h:=0$ on $N$, we then have $\varphi(x) + \psi(y) + h(x)(y-x) \geq f(x,y)$ for all $(x,y)\in I\times J$, because the left-hand side is infinite for $x\in N$. Thus, $(\varphi,\psi,h)\in \cD^{c}(f)$ and the proof is complete.
\end{proof}

\section{Duality on an Irreducible Component}\label{se:dualityOnIrred}

Let $\mu\leq_{c}\nu$ be irreducible with domain $(I,J)$.  We define the primal and dual values as follows.

\begin{definition}\label{de:primalAndDualOnIrred}
  Let $f: \R^{2}\to [0,\infty]$. The \emph{primal problem} is 
  $$
    \bS_{\mu,\nu}(f) :=\sup_{P\in\cM(\mu,\nu)} P(f) \;\in [0,\infty],
  $$
  where $P(f)$ refers to the outer integral if $f$ is not measurable. 
  The \emph{dual problem} is
  $$
  \bI^{pw}_{\mu,\nu}(f) :=\inf_{(\varphi,\psi,h)\in \cD^{c,pw}_{\mu,\nu}(f)}  \{\mu(\varphi) + \nu(\psi)\} \;\in [0,\infty].
  $$ 
\end{definition}

The goal of this section is the following duality result; it corresponds to our main result in the case of irreducible marginals. We recall that a function $f: \R^{2}\to [0,\infty]$ is called \emph{upper semianalytic} if the sets $\{f\geq c\}$ are analytic for all $c\in\R$, where a subset of $\R^{2}$ is called analytic if it is the (forward) image of a Borel subset of a Polish space under a Borel mapping. Any Borel function is upper semianalytic and any upper semianalytic function is universally measurable; see, e.g., \cite[Chapter~7]{BertsekasShreve.78} for background.

\begin{theorem}\label{th:dualityIrred}
  Let $\mu\leq_{c}\nu$ be irreducible and let $f: \R^{2}\to [0,\infty]$.
  \begin{enumerate}
  \item If $f$ is upper semianalytic, then $\bS_{\mu,\nu}(f)=\bI^{pw}_{\mu,\nu}(f) \in [0,\infty]$.
  \item If\, $\bI^{pw}_{\mu,\nu}(f)<\infty$, there exists a dual optimizer $(\varphi,\psi,h)\in \cD^{c,pw}_{\mu,\nu}(f)$.
  \end{enumerate}
\end{theorem}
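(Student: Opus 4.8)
The plan is to establish weak duality first, then strong duality and dual existence together via the closedness result of Proposition~\ref{pr:closednessIrred} combined with a Hahn--Banach separation argument carried out on the space of bounded measurable functions.

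\textbf{Weak duality.} For any $P\in\cM(\mu,\nu)$ and any $(\varphi,\psi,h)\in\cD^{c,pw}_{\mu,\nu}(f)$, the pointwise inequality $\varphi(x)+\psi(y)+h(x)(y-x)\geq f(x,y)\geq 0$ together with Remark~\ref{rk:integralOfSuperhedge} gives $P(f)\leq P[\varphi(X)+\psi(Y)+h(X)(Y-X)]=\mu(\varphi)+\nu(\psi)$; taking suprema and infima yields $\bS_{\mu,\nu}(f)\leq\bI^{pw}_{\mu,\nu}(f)$. This already handles the case $\bI^{pw}_{\mu,\nu}(f)=\infty$, so for (i) it remains to prove $\bS\geq\bI^{pw}$ when the latter is finite, and for (ii) to produce an optimizer in that case. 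Both will follow once we know (a) $\bS_{\mu,\nu}(f)\geq\bI^{pw}_{\mu,\nu}(f)$ and (b) the dual infimum is attained.

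\textbf{Reduction to bounded $f$ and the key limiting step.} For a bounded upper semianalytic $f$, one can run a classical minimax/separation argument: on the space of bounded measurable functions on $I\times J$, consider the sublinear functional given by the primal value with relaxed marginal/martingale constraints, use the Hahn--Banach theorem (or the classical transport duality of Kellerer applied after dualizing the martingale constraint as in \cite{BeiglbockHenryLaborderePenkner.11}) to obtain, for each $\eps>0$, a triplet $(\varphi,\psi,h)$ with $\varphi\in L^1(\mu)$, $\psi\in L^1(\nu)$, $\varphi(x)+\psi(y)+h(x)(y-x)\geq f(x,y)$ pointwise and $\mu(\varphi)+\nu(\psi)\leq \bS_{\mu,\nu}(f)+\eps$; this is where upper semianalyticity is used, via the measurable projection/selection theory of \cite{BertsekasShreve.78} to ensure the primal value is well-behaved. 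For general upper semianalytic $f:\R^2\to[0,\infty]$, approximate from below by $f_n:=f\wedge n$ (restricted to $I\times J$), so $f_n\uparrow f$ pointwise and $\bS_{\mu,\nu}(f_n)\uparrow\bS_{\mu,\nu}(f)$ by monotone convergence for the (outer) integral. If $\bS_{\mu,\nu}(f)=\infty$ there is nothing to prove; if it is finite, pick $(\varphi_n,\psi_n,h_n)\in\cD^{1,pw}_{\mu,\nu}(f_n)\subseteq\cD^{c,pw}_{\mu,\nu}(f_n)$ with $\mu(\varphi_n)+\nu(\psi_n)\leq\bS_{\mu,\nu}(f_n)+1/n\leq\bS_{\mu,\nu}(f)+1$. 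Since $\sup_n\{\mu(\varphi_n)+\nu(\psi_n)\}<\infty$ and $f_n\to f$ pointwise, Proposition~\ref{pr:closednessIrred} produces $(\varphi,\psi,h)\in\cD^{c,pw}_{\mu,\nu}(f)$ with $\mu(\varphi)+\nu(\psi)\leq\liminf_n\{\mu(\varphi_n)+\nu(\psi_n)\}\leq\bS_{\mu,\nu}(f)$. Combined with weak duality this gives $\bI^{pw}_{\mu,\nu}(f)\leq\mu(\varphi)+\nu(\psi)\leq\bS_{\mu,\nu}(f)\leq\bI^{pw}_{\mu,\nu}(f)$, proving (i); and the same $(\varphi,\psi,h)$ is a dual optimizer, proving (ii). (When $\bI^{pw}_{\mu,\nu}(f)<\infty$ we have $\bS_{\mu,\nu}(f)=\bI^{pw}_{\mu,\nu}(f)<\infty$ by part (i), so the hypothesis of (ii) feeds directly into this argument.)

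\textbf{Main obstacle.} The delicate point is the bounded case: establishing $\bS\geq\bI^{1,pw}$ with \emph{integrable} potentials for bounded upper semianalytic $f$. One must verify that the relevant primal functional is finitely additive/sublinear and that the separating linear functional decomposes as $\mu$-integration against $\varphi$, $\nu$-integration against $\psi$, and a term dualizing $E^P[h(X)(Y-X)]=0$; continuity/closedness of the constraint set and the measurable-selection input from \cite{BertsekasShreve.78} to handle non-Borel $f$ are the technical hurdles, together with checking that the multiplier $h$ can be taken Borel. Once this finite-level duality with $\eps$-optimizers is in hand, Proposition~\ref{pr:closednessIrred} does all the heavy lifting for the passage to general $f$ and for attainment, so the architecture of the proof is: weak duality $\Rightarrow$ bounded-$f$ duality via classical tools $\Rightarrow$ truncation $+$ closedness $\Rightarrow$ both claims.
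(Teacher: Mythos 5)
Your overall architecture (weak duality, a separation argument for nice $f$, then Proposition~\ref{pr:closednessIrred} for the passage to general $f$ and for attainment) matches the paper's, and your truncation step $f_n=f\wedge n$ together with the closedness result is sound \emph{given} duality with $\eps$-optimizers for bounded upper semianalytic $f$. But that is precisely where there is a genuine gap: the step you defer to ``a classical minimax/separation argument on the space of bounded measurable functions'' does not go through. A Hahn--Banach separation on the space of bounded measurable functions with the sup norm produces only a \emph{finitely additive} set function in the dual, which cannot be identified with an element of $\cM(\mu,\nu)$; and the alternative route via Kellerer's duality plus minimax as in \cite{BeiglbockHenryLaborderePenkner.11} requires $f$ to be upper semicontinuous, not merely upper semianalytic --- that is exactly the restriction the theorem is designed to remove. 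Your remark that upper semianalyticity enters ``via measurable projection/selection'' is not a proof and is not how the extension is actually achieved.

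The paper's route is: (a) prove the duality for continuous $f$ in a weighted space $C_{\zeta}$ by Hahn--Banach, where Riesz representation on $C_{0}(\R^{2})$ yields a genuine (countably additive) measure, and where Proposition~\ref{pr:closednessIrred} is already needed to show that the cone $K=\{g:\bI(g)\le 0\}$ is closed (a role for the closedness result your proposal omits); (b) extend to bounded upper semicontinuous $f$ by decreasing approximation; (c) show that \emph{both} $\bS$ and $\bI$ are $\cU$-capacities (the upward continuity of $\bI$ again resting on Proposition~\ref{pr:closednessIrred}) and invoke Choquet's capacitability theorem to get $\bS(f)=\sup\{\bS(g):g\in\cU,\,g\le f\}$ and likewise for $\bI$, which is the step that handles upper semianalytic $f$. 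Your truncation argument is essentially a special case of the capacitability of $\bI$, so it is not wasted, but without the Choquet step (or some substitute for it) the bounded upper semianalytic case --- the crux of part (i) --- remains unproved. Part (ii), attainment via Proposition~\ref{pr:closednessIrred} applied to a minimizing sequence (or the constant sequence $f_n=f$), is correct as you state it.
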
 

The proof of Theorem~\ref{th:dualityIrred} is based on Proposition~\ref{pr:closednessIrred}, Choquet's theorem and a separation argument, so let us introduce the relevant terminology. Let $[0,\infty]^{\R^2}$ be the set of all functions $f: \R^{2}\to [0,\infty]$, let $\USA_{+}$ be the sublattice of upper semianalytic functions and let $\cU$ be the sublattice of bounded upper semicontinuous functions; note that $\cU$ is stable with respect to countable infima. A mapping $\bC: [0,\infty]^{\R^2} \to [0,\infty]$ is called a $\cU$-capacity if it is monotone, sequentially continuous upwards on $[0,\infty]^{\R^2}$, and sequentially continuous downwards on~$\cU$. 

We write $\bS(f):=\bS_{\mu,\nu}(f)$ and $\bI(f):=\bI^{pw}_{\mu,\nu}(f)$ for the rest of this section; both of these mappings will turn out to be capacities.

\begin{lemma}\label{le:SisCapacity}
  The mapping $\bS: [0,\infty]^{\R^2}\to [0,\infty]$ is a $\cU$-capacity.
\end{lemma}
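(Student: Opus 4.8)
The plan is to verify the three defining properties of a $\cU$-capacity for $\bS=\bS_{\mu,\nu}$: monotonicity, sequential upward continuity on all of $[0,\infty]^{\R^2}$, and sequential downward continuity on the lattice $\cU$ of bounded upper semicontinuous functions. Monotonicity is immediate, since $f\le g$ pointwise forces $P(f)\le P(g)$ for every $P$, where we understand $P(\cdot)$ as the outer integral; taking suprema over $P\in\cM(\mu,\nu)$ preserves the inequality.

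For upward continuity, suppose $f_n\uparrow f$ pointwise. Monotonicity already gives $\bS(f_n)\le\bS(f)$ and hence $\sup_n\bS(f_n)\le\bS(f)$. For the reverse inequality, fix any $P\in\cM(\mu,\nu)$; since $f_n\uparrow f$, monotone convergence for the outer integral (or ordinary monotone convergence after replacing each $f_n$ by a measurable envelope, using that a countable sup of measurable functions is measurable) yields $P(f_n)\uparrow P(f)$, so $P(f)=\sup_n P(f_n)\le\sup_n\bS(f_n)$. Taking the supremum over $P$ gives $\bS(f)\le\sup_n\bS(f_n)$, as required. The only mild subtlety is the handling of the outer integral when the $f_n$ fail to be measurable, which is standard.

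The substantive point is downward continuity on $\cU$: if $f_n\downarrow f$ with all $f_n$ bounded upper semicontinuous (so $f$ is bounded upper semicontinuous as well), we must show $\bS(f_n)\downarrow\bS(f)$. Again monotonicity gives $\bS(f)\le\inf_n\bS(f_n)$, so the content is $\inf_n\bS(f_n)\le\bS(f)$. For each $n$ choose $P_n\in\cM(\mu,\nu)$ with $P_n(f_n)\ge\bS(f_n)-1/n$. The key is that $\cM(\mu,\nu)$ is weakly compact: it is a subset of $\Pi(\mu,\nu)$, which is tight (its marginals are fixed) hence relatively weakly compact by Prokhorov, and it is weakly closed because the martingale constraint $E^P[(Y-X)g(X)]=0$ for bounded continuous $g$ (together with the first-moment integrability, which is uniform here since the marginals are fixed) is preserved under weak limits. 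So a subsequence $P_{n_k}$ converges weakly to some $P\in\cM(\mu,\nu)$. Now pass to the limit: for fixed $m$, monotonicity of the sequence gives $P_{n_k}(f_{n_k})\le P_{n_k}(f_m)$ once $n_k\ge m$, and since $f_m$ is bounded upper semicontinuous, weak convergence yields $\limsup_k P_{n_k}(f_m)\le P(f_m)$. Hence $\inf_n\bS(f_n)=\lim_k\bigl(\bS(f_{n_k})\bigr)\le\limsup_k P_{n_k}(f_{n_k})\le P(f_m)$ for every $m$; letting $m\to\infty$ and using $f_m\downarrow f$ with $f$ bounded upper semicontinuous (again by weak-limit/upper-semicontinuity-type reasoning, or directly by dominated convergence since the $f_m$ are uniformly bounded) gives $\inf_n\bS(f_n)\le P(f)\le\bS(f)$.

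I expect the main obstacle to be the weak-compactness argument for $\cM(\mu,\nu)$ and the correct bookkeeping in the downward-continuity limit — specifically making sure the upper-semicontinuity hypothesis is used exactly where weak convergence only gives a one-sided ($\limsup$) bound, and that the diagonal/subsequence extraction is arranged so that $P_{n_k}(f_{n_k})$ can be compared to $P_{n_k}(f_m)$ for the fixed earlier index $m$. The other two properties are routine.
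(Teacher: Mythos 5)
Your proof is correct and is exactly the ``standard argument'' that the paper delegates to Kellerer \cite[Propositions~1.21, 1.26]{Kellerer.84}: monotonicity and upward continuity via measurable envelopes and monotone convergence, and downward continuity on $\cU$ via weak compactness of $\cM(\mu,\nu)$ together with the Portmanteau inequality for bounded upper semicontinuous integrands. The subsequence bookkeeping ($P_{n_k}(f_{n_k})\le P_{n_k}(f_m)$ for $n_k\ge m$, then $m\to\infty$) is handled correctly, so there is nothing to add.
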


\begin{proof}
  Since $\cM(\mu,\nu)$ is weakly compact, this follows by the standard arguments presented, e.g., in \cite[Propositions~1.21, 1.26]{Kellerer.84}.
\end{proof}

Next, we show the absence of a duality gap for upper semicontinuous functions. This result is already known from \cite[Corollary~1.1]{BeiglbockHenryLaborderePenkner.11} which uses a minimax argument and Kellerer's duality theorem~\cite{Kellerer.84} for classical transport. We shall give a direct and self-contained proof based on Proposition~\ref{pr:closednessIrred}.

\begin{lemma}\label{le:uscDuality}
  Let $f\in \cU$; then $\bS(f)=\bI(f)$.
\end{lemma}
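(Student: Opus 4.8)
The plan is to prove the two inequalities $\bS(f)\le\bI(f)$ and $\bI(f)\le\bS(f)$ separately. The first (``weak duality'') is immediate: for any $(\varphi,\psi,h)\in\cD^{c,pw}_{\mu,\nu}(f)$ the superhedging function $\varphi(X)+\psi(Y)+h(X)(Y-X)$ dominates $f\ge0$ and is therefore bounded below, so Remark~\ref{rk:integralOfSuperhedge} gives $\mu(\varphi)+\nu(\psi)=P[\varphi(X)+\psi(Y)+h(X)(Y-X)]\ge P(f)$ for every $P\in\cM(\mu,\nu)$; taking the supremum over $P$ and the infimum over $(\varphi,\psi,h)$ yields $\bS(f)\le\bI(f)$.

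For the reverse inequality I would first reduce to a smooth reward by means of Proposition~\ref{pr:closednessIrred}. Since $f\in\cU$ is bounded and upper semicontinuous, it is the pointwise decreasing limit of the bounded Lipschitz functions $f_n(z):=\sup_{z'}\{f(z')-n|z-z'|\}$, and $0\le f_n\le\sup f$. Granting the no-gap statement $\bS(f_n)=\bI(f_n)$ for bounded Lipschitz (or merely bounded continuous) rewards — the base case, treated below — I would pick $(\varphi_n,\psi_n,h_n)\in\cD^{c,pw}_{\mu,\nu}(f_n)$ with $\mu(\varphi_n)+\nu(\psi_n)\le\bI(f_n)+1/n$; since $\bI(f_n)=\bS(f_n)\le\bS(f_1)<\infty$, these costs are uniformly bounded, so Proposition~\ref{pr:closednessIrred} applies and produces $(\varphi,\psi,h)\in\cD^{c,pw}_{\mu,\nu}(f)$ with $\mu(\varphi)+\nu(\psi)\le\liminf_n\{\mu(\varphi_n)+\nu(\psi_n)\}=\liminf_n\bS(f_n)$. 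Because $\bS$ is a $\cU$-capacity (Lemma~\ref{le:SisCapacity}) and $f_n\downarrow f$ in $\cU$, one has $\bS(f_n)\downarrow\bS(f)$, whence $\bI(f)\le\mu(\varphi)+\nu(\psi)\le\bS(f)$, and combined with weak duality this proves the lemma.

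It remains to establish the base case, $\bS(g)=\bI(g)$ for $g$ bounded (Lipschitz or continuous), which I would do by a Hahn--Banach separation. Viewing $\bI$ as a functional on a space of functions of at most linear growth on $I\times J$ (one that contains both $g$ and the functions $h(x)(y-x)$), it is real-valued — bounded above by the cost of a hedge of the form $C(1+|x-c|)$ in $x$ plus $C(1+|y-c|)$ in $y$, bounded below by Remark~\ref{rk:integralOfSuperhedge} against a fixed $P\in\cM(\mu,\nu)$, both using the finiteness of the first moments — and sublinear. Extending the positively homogeneous functional $\lambda g\mapsto\lambda\bI(g)$ to a linear $\ell\le\bI$ with $\ell(g)=\bI(g)$, positivity of $\ell$ follows from $\bI\le0$ on nonpositive functions; comparison of $\ell$ with hedges depending only on $x$, resp.\ only on $y$, pins its one-dimensional marginals to $\mu$ and $\nu$, so $\ell$ is tight and is represented by a measure $P\in\Pi(\mu,\nu)$; and comparison of $\ell$ with the zero-cost hedges $(0,0,\pm h)$ of $\pm h(x)(y-x)$ forces $E^P[h(X)(Y-X)]=0$ for bounded $h$, i.e.\ $P\in\cM(\mu,\nu)$. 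Then $\bI(g)=\ell(g)=P(g)\le\bS(g)$. The step I expect to be the main obstacle is exactly the martingale constraint: $h(x)(y-x)$ is unbounded, so to transfer it from the dual problem to the representing measure one must work on a function class containing it and, when passing to the limit, control the truncation error $(|y-x|-n)^+$, whose hedging cost tends to $0$ as $n\to\infty$ precisely because $\mu,\nu$ have finite first moments.
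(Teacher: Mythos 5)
Your argument is correct, but it departs from the paper's proof in the two substantive steps, so a comparison is in order. For the reduction from upper semicontinuous to continuous rewards, you feed near-optimal dual elements for the Lipschitz approximations $f_n\downarrow f$ into Proposition~\ref{pr:closednessIrred} to manufacture a dual element for $f$ with cost at most $\lim\bS(f_n)=\bS(f)$; the paper instead gets $\bI(f_n)\to\bI(f)$ by pure monotonicity combined with the weak duality inequality ($\bI(f)\le\bI(f_n)$ and $\lim\bI(f_n)=\lim\bS(f_n)=\bS(f)\le\bI(f)$), with no appeal to the closedness result at this stage. For the base case of continuous rewards, you use the dominated-extension form of Hahn--Banach ($\ell\le\bI$ with $\ell(g)=\bI(g)$, using $\bI(0)=0$ and sublinearity) followed by a Daniell--Stone/tightness argument to upgrade the finitely additive $\ell$ to a measure $P\in\Pi(\mu,\nu)$, and a truncation of $h(x)(y-x)$ to verify $P\in\cM(\mu,\nu)$; the paper instead works in the weighted space $C_\zeta$ with a superlinear gauge $\zeta$, separates $f$ from the convex cone $K=\{g:\bI(g)\le 0\}$, and here is where Proposition~\ref{pr:closednessIrred} enters --- it is exactly what makes $K$ closed. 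The paper's route buys countable additivity for free, since $C_\zeta^*$ is identified with finite signed measures via Riesz, and it accommodates $h(X)(Y-X)$ directly because bounded $h$ times $(y-x)$ lies in $C_\zeta$; your route avoids needing the cone to be closed but must pay for countable additivity (tightness forced by the marginals) and for the unboundedness of the martingale test functions, which you correctly identify as the crux and resolve by noting that the hedging cost of $(|y-x|-n)^+$ vanishes as $n\to\infty$ thanks to the finite first moments. Both arguments are sound; yours is closer in spirit to the classical Daniell-type proofs of Kantorovich duality, while the paper's is organized so that a single closedness result (Proposition~\ref{pr:closednessIrred}) powers the separation step, the capacity property of $\bI$, and dual attainment all at once.
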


\begin{proof}
  Let $f: \R^2\to [0,\infty]$ be bounded and upper semicontinuous; then the inequality 
  \begin{equation}\label{eq:easyIneq}
    \bS(f)\leq \bI(f)
  \end{equation}
  follows from Remark~\ref{rk:integralOfSuperhedge}. Below, we show the converse inequality.
  
  (i) We first prove the result for a class of continuous reward functions. This will be a Hahn--Banach argument, which requires us to introduce a suitable space.
	  
	Recall that $\mu$ has a finite first moment. Thus, by the de la Vall\'ee--Poussin theorem, there exists an increasing function $\zeta_{\mu}: \R_{+}\to\R_{+}$ of superlinear growth such that $x\mapsto \zeta_{\mu}(|x|)$ is $\mu$-integrable. The same applies to $\nu$, and we set 
	$$
	  \zeta(x,y)=1+\zeta_{\mu}(|x|)+\zeta_{\nu}(|y|), \quad (x,y)\in\R^{2}.
	$$
	Let $C_{\zeta}=C_{\zeta}(\R^{2})$ be the vector space of all continuous functions $f: \R^{2}\to\R$ such that $f/\zeta$ vanishes at infinity; this includes all continuous functions of linear growth. We equip $C_{\zeta}$ with the norm $|f|_{\zeta}:=|f/\zeta|_{\infty}$, where $|\cdot|_{\infty}$ is the uniform norm.
	
	Let $f\in C_{\zeta}$. Then, setting $\varphi_{0}(x)=\zeta_{\mu}(|x|)$ and $\psi_{0}(y)=\zeta_{\mu}(|y|)$, we have
	$$
	  -c (1+ \varphi_{0} + \psi_{0}) \leq f \leq c (1+ \varphi_{0} + \psi_{0})
	$$
	for some constant $c$, showing in particular that $\bS(f)$ is finite. Thus, we may assume that $\bS(f)=0$ by a translation. Consider the set 
	$$
	  K=\{g\in C_{\zeta}:\, \bI(g)\leq0\}.
	$$
	This is  a convex cone in $C_{\zeta}$, and Proposition~\ref{pr:closednessIrred} implies that $K$ is closed; here we use that a convergent sequence in $C_{\zeta}$ is uniformly bounded from below by a function of the form $-c (1+ \varphi_{0} + \psi_{0})$.
	
	  Assume for contradiction that $\bI(f)>0$; that is, $f\notin K$. Then the Hahn--Banach theorem and the cone property yield a linear functional $\ell\in C_{\zeta}^{*}$ such that $\ell(K)\subseteq \R_{-}$ and $\ell(f)>0$. We will argue below that $\ell$ can be represented by a finite signed measure $\pi$. Note that $\ell(K)\subseteq \R_{-}$ and the fact that $K$ contains all functions of the form $\varphi(X)-\mu(\varphi)$ with $\varphi\in C_{b}(\R)$ imply that $\ell(\varphi(X))=\mu(\varphi)$ for all $\varphi\in C_{b}(\R)$; i.e., $\mu$ is the first marginal of~$\pi$, and similarly $\nu$ is the second marginal. 
Thus, $\pi\in \Pi(\mu,\nu)$. Moreover, if $h\in C_{b}(\R)$, then the function $h(X)(Y-X)$ is in $C_{\zeta}$ due to its linear growth, and a scaling argument shows that $\ell(h(X)(Y-X))=0$. This implies that $\pi$ is a martingale transport; i.e., $\pi\in\cM(\mu,\nu)$. But now $\pi(f)=\ell(f)>0$ contradicts $\bS(f)=0$, and we have shown that $\bI(f)\leq \bS(f)$. 
	
	It remains to argue that $C_{\zeta}^{*}$ can be represented by finite signed measures. Indeed, $f\mapsto f/\zeta$ is an isomorphism of normed spaces from $C_{\zeta}$ to the usual space $C_{0}(\R^{2})$ of continuous functions vanishing at infinity with the uniform norm. By Riesz' representation theorem, any continuous linear functional on $C_{0}(\R^{2})$ can be represented by a signed measure $m$, and hence any $\ell \in C_{\zeta}^{*}$ can be represented as $\ell(f)=m(f/\zeta)$. Using $1/\zeta \in C_{0}(\R^{2})\subseteq L^{1}(m)$ as a Radon--Nikodym density, $\ell$ is thus represented by the finite signed measure $d\bar m = (1/\zeta)\,dm$. 
	
	(ii) Let $f$ be bounded and upper semicontinuous, then there exist $f_{n}\in C_{b}(\R^{2})\subseteq C_{\zeta}$ decreasing to $f$ and we have $\bS(f_{n})= \bI(f_{n})$ for all $n$ by part~(i) of this proof. As $\bS(f_{n})\to \bS(f)$ by the decreasing continuity of~$\bS$, cf.\ Lemma~\ref{le:SisCapacity}, it remains to show that  $\bI(f_{n})\to \bI(f)$. Since $f \leq f_{n}$, we have $\bI(f) \leq \bI(f_{n})$ for all $n$. On the other hand, \eqref{eq:easyIneq} shows that
	$$
	  \lim \bI(f_{n}) = \lim \bS(f_{n}) = \bS(f) \leq \bI(f)
	$$
	and this completes the proof.
\end{proof}

Our last preparation for the proof of Theorem~\ref{th:dualityIrred} is to show that $\bI$ is  a capacity; again, this is a consequence of the closedness result in Proposition~\ref{pr:closednessIrred}.

\begin{lemma}\label{le:IisCapacity}
  The mapping $\bI: [0,\infty]^{\R^2} \to [0,\infty]$ is a $\cU$-capacity.
\end{lemma}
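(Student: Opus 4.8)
The plan is to verify directly the three defining properties of a $\cU$-capacity for the map $\bI=\bI^{pw}_{\mu,\nu}$: monotonicity, sequential upward continuity on all of $[0,\infty]^{\R^2}$, and sequential downward continuity on $\cU$. Monotonicity is immediate from the structure of the dual domains: if $f\le g$ on $I\times J$, then every $(\varphi,\psi,h)\in\cD^{c,pw}_{\mu,\nu}(g)$ satisfies $\varphi(x)+\psi(y)+h(x)(y-x)\ge g(x,y)\ge f(x,y)$, so $\cD^{c,pw}_{\mu,\nu}(g)\subseteq\cD^{c,pw}_{\mu,\nu}(f)$, and taking infima over the two domains gives $\bI(f)\le\bI(g)$. (Here, as everywhere in this section, $\bI(f)$ depends on $f$ only through its restriction to $I\times J$, so the statements of Proposition~\ref{pr:closednessIrred} and Lemma~\ref{le:uscDuality} apply verbatim.)

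For upward continuity, I would take $f_n\uparrow f$ pointwise; monotonicity already yields that $\bI(f_n)$ is increasing with $\bI(f_n)\le\bI(f)$, so only $\bI(f)\le\lim_n\bI(f_n)$ needs an argument. If $\lim_n\bI(f_n)=\infty$ this is trivial, so assume $\sup_n\bI(f_n)<\infty$ and choose, for each $n$, a triple $(\varphi_n,\psi_n,h_n)\in\cD^{c,pw}_{\mu,\nu}(f_n)$ with $\mu(\varphi_n)+\nu(\psi_n)\le\bI(f_n)+1/n$; then $\sup_n\{\mu(\varphi_n)+\nu(\psi_n)\}<\infty$. Since $f_n\to f$ pointwise, Proposition~\ref{pr:closednessIrred} furnishes $(\varphi,\psi,h)\in\cD^{c,pw}_{\mu,\nu}(f)$ with $\mu(\varphi)+\nu(\psi)\le\liminf_n\{\mu(\varphi_n)+\nu(\psi_n)\}\le\liminf_n(\bI(f_n)+1/n)=\lim_n\bI(f_n)$, whence $\bI(f)\le\lim_n\bI(f_n)$.

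For downward continuity on $\cU$, I would argue via the no-gap theorem for regular rewards. Let $f_n\downarrow f$ with $f_n,f\in\cU$; since bounded upper semicontinuous functions are in particular Borel, hence upper semianalytic, Lemma~\ref{le:uscDuality} gives $\bI(f_n)=\bS(f_n)$ for all $n$ and $\bI(f)=\bS(f)$. By Lemma~\ref{le:SisCapacity} the map $\bS$ is a $\cU$-capacity, hence sequentially continuous downwards on $\cU$, so $\bS(f_n)\downarrow\bS(f)$; combining with the previous identities gives $\bI(f_n)\downarrow\bI(f)$, completing the verification.

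The substantive work has already been done upstream: the only nontrivial input for upward continuity is the closedness result Proposition~\ref{pr:closednessIrred}, and the only nontrivial input for downward continuity is the combination of Lemma~\ref{le:uscDuality} and Lemma~\ref{le:SisCapacity}. Consequently there is no genuine obstacle here; the one point requiring a little care is the reduction, in the upward-continuity step, to a family of \emph{uniformly cost-bounded} almost-optimal dual triples so that Proposition~\ref{pr:closednessIrred} is applicable, and this is handled by the case distinction on whether $\lim_n\bI(f_n)$ is finite.
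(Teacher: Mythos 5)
Your proof is correct and follows essentially the same route as the paper: monotonicity is immediate from the nesting of the dual domains, upward continuity on $[0,\infty]^{\R^2}$ is obtained by selecting $1/n$-optimal triples and invoking Proposition~\ref{pr:closednessIrred}, and downward continuity on $\cU$ is inherited from $\bS$ via Lemma~\ref{le:uscDuality} and Lemma~\ref{le:SisCapacity}. The only (harmless) extra step is your detour through upper semianalyticity; Lemma~\ref{le:uscDuality} applies directly to $f\in\cU$.
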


\begin{proof}
  As $\bI=\bS$ on $\cU$ by Lemma~\ref{le:uscDuality}, Lemma~\ref{le:SisCapacity} already shows that~$\bI$ is sequentially continuous downwards on $\cU$. Let $f,f_{n}\in [0,\infty]^{\R^2}$ be such that $f_{n}$ increases to $f$; we need to show that $\bI(f_{n})\to \bI(f)$. It is clear that $\bI$ is monotone; in particular, $\bI(f)\geq \limsup \bI(f_{n})$, and $\bI(f_{n})\to \bI(f)$ if $\sup_{n} \bI(f_{n})=\infty$.

  Hence, we only need to show $\bI(f) \leq \liminf \bI(f_{n})$ under the condition that $\sup_{n} \bI(f_{n})<\infty$. Indeed, by the definition of $\bI(f_{n})$ there exist $(\varphi_{n},\psi_{n},h_{n})\in \cD^{c,pw}_{\mu,\nu}(f_{n})$ with 
  $$
    \mu(\varphi_{n}) + \nu(\psi_{n}) \leq \bI(f_{n}) +1/n.
  $$
  Proposition~\ref{pr:closednessIrred} then yields $(\varphi,\psi,h)\in \cD^{c,pw}_{\mu,\nu}(f)$ with 
  $$
    \mu(\varphi) + \nu(\psi) \leq \liminf [\bI(f_{n}) +1/n],
  $$
  showing that $\bI(f) \leq \liminf \bI(f_{n})$ as desired.
\end{proof}

We can now deduce the main result of this section.

\begin{proof}[Proof of Theorem~\ref{th:dualityIrred}]
  (i) In view of Lemma~\ref{le:SisCapacity}, %
  Choquet's capacitability theorem shows that
  $$
    \bS(f)=\sup \{\bS(g):\, g\in \cU, \,g\leq f\},\quad f\in\USA_{+}.
  $$
  By Lemma~\ref{le:IisCapacity}, the same approximation formula holds for $\bI$, and as $\bS=\bI$ on $\cU$ by Lemma~\ref{le:uscDuality}, it follows that $\bS=\bI$ on $\USA_{+}$.
  
  (ii) To see that the infimum is attained when it is finite, it suffices to apply Proposition~\ref{pr:closednessIrred} with the constant sequence $f_{n}=f$.
\end{proof}

\section{Main Results}\label{se:mainResults}

\subsection{Duality}

Let $\mu\leq_{c}\nu$ be probability measures in convex order and let $f: \R^{2}\to [0,\infty]$ be a Borel function. We continue to denote the primal problem by
$$
    \bS_{\mu,\nu}(f) :=\sup_{P\in\cM(\mu,\nu)} P(f),
$$
as in the irreducible case. Some more notation needs to be introduced for the dual problem. Let us first recall from Proposition~\ref{pr:decomp} the decompositions
$$
  \mu=\sum_{k\geq0} \mu_{k},\quad \nu=\sum_{k\geq 0} \nu_{k},
$$
where $\mu_{k}\leq_{c}\nu_{k}$ is irreducible with domain $(I_{k},J_{k})$ for $k\geq1$ and $\mu_{0}=\nu_{0}$. Moreover, $P_{0}$ denotes the unique element of $\cM(\mu_{0},\nu_{0})$.

Let $(\varphi,\psi,h): \R\to\overline\R \times \overline\R \times \R$ be Borel. Since $P_{0}$ is concentrated on the diagonal $\Delta$, we have
$$
  \varphi(X)+\psi(Y) + h(X)(Y-X) = \varphi(X)+\psi(X)\quad P_{0}\as;
$$
that is, the function $h$ plays no role and $\varphi,\psi$ enter only through their sum. In fact, the  dual problem associated to $(\mu_{0},\nu_{0})$ is trivially solved, for instance, by setting $\varphi(x)=f(x,x)$ and $\psi=0$. There is no need to use integrability modulo concave functions, but to simplify the notation below, we set 
$$
  L^{c}(\mu_{0},\nu_{0}) := \{(\varphi,\psi): \, \varphi+\psi \in L^{1}(\mu_{0}) \} 
$$
and $\mu_{0}(\varphi)+\nu_{0}(\psi) := \mu_{0}(\varphi+\psi)$ for $(\varphi,\psi)\in L^{c}(\mu_{0},\nu_{0})$. Moreover,  $\cD^{c,pw}_{\mu_{0},\nu_{0}}(f)$ is the set of all $(\varphi,\psi,h)$ with $(\varphi,\psi)\in L^{c}(\mu_{0},\nu_{0})$ and 
$$
  \varphi(x)+\psi(x)\geq f(x,x), \quad x\in I_{0}.
$$
Finally, it will be convenient to define 
$
  \bS_{\mu_{0},\nu_{0}}(f) := P_{0}(f) \equiv \mu_{0}(f(X,X)).
$

We can now introduce the domain for the dual problem on the whole real line.

\begin{definition}\label{de:globalIntegrability}
  Let $L^{c}(\mu,\nu)$ be the set of all Borel functions $\varphi,\psi: \R\to\overline\R$ such that $(\varphi,\psi)\in L^{c}(\mu_{k},\nu_{k})$ for all $k\geq0$ and 
  $$
    \sum_{k\geq0} |\mu_{k}(\varphi)+\nu_{k}(\psi)| <\infty.
  $$
  For $(\varphi,\psi)\in L^{c}(\mu,\nu)$, we define
  $$
    \mu(\varphi)+\nu(\psi) := \sum_{k\geq0} \{\mu_{k}(\varphi)+\nu_{k}(\psi)\} <\infty,
  $$
  and $\cD^{c}_{\mu,\nu}(f)$ is the set of all Borel functions $(\varphi,\psi,h): \R\to\overline\R \times \overline\R \times \R$   
  such that $(\varphi,\psi)\in L^{c}(\mu,\nu)$ and 
  $$
    \varphi(X)+\psi(Y) + h(X)(Y-X)\geq f(X,Y)\quad \cM(\mu,\nu)\qs
  $$
  Finally, 
  $$
  \bI_{\mu,\nu}(f) :=\inf_{(\varphi,\psi,h)\in \cD^{c}_{\mu,\nu}(f)}  \{\mu(\varphi) + \nu(\psi)\} \;\in [0,\infty].
  $$
\end{definition}

We emphasize that the dual domain $\cD^{c}_{\mu,\nu}(f)$ is now defined in the quasi-sure sense. Before making precise the correspondence with the individual components, let us recall that the intervals $J_{k}$ may overlap at their endpoints, so we have to avoid counting certain things twice. Indeed, let $(\varphi_{k},\psi_{k},h_{k})\in \cD^{c,pw}_{\mu_{k},\nu_{k}}(f)$. If $J_{k}$ contains one of its endpoints, it is an atom of~$\nu$ and hence~$\psi_{k}$ is finite on~$J_{k}\setminus I_{k}$. Translating $\psi_{k}$ by an affine function and shifting $\varphi_{k}$ and $h_{k}$ accordingly, cf.\ \eqref{eq:affineNormalization}, we can thus normalize $(\varphi_{k},\psi_{k},h_{k})$ such that
\begin{equation}\label{eq:psiNormalization}
    \psi_{k}=0 \quad \mbox{on}\quad J_{k} \setminus I_{k}.
\end{equation}
On the strength of our analysis of the $\cM(\mu,\nu)$-polar sets, the dual domain can be decomposed as follows.

\begin{lemma}\label{le:reductionToComponentsDual}
  Let $f: \R^{2}\to [0,\infty]$ be Borel, let $\mu\leq_{c}\nu$ and let $\mu_{k},\nu_{k}$ be as in Proposition~\ref{pr:decomp}.
  
  \begin{enumerate}
  \item
  Let $(\varphi_{k},\psi_{k},h_{k})\in \cD^{c,pw}_{\mu_{k},\nu_{k}}(f)$ for $k\geq 1$, normalized as in~\eqref{eq:psiNormalization}, and let $\varphi_{0}(x)=f(x,x)$ and $\psi_{0}=0$. If $\sum_{k\geq0} \{\mu(\varphi_{k})+ \nu(\psi_{k}) \}<\infty$, then
  $$
    \varphi:=\sum_{k\geq0} \varphi_{k} \1_{I_{k}} ,\quad \psi:=\sum_{k\geq1} \psi_{k} \1_{J_{k}}, \quad h:=\sum_{k\geq1} h_{k} \1_{I_{k}}
  $$
  satisfies $(\varphi,\psi,h)\in \cD^{c}_{\mu,\nu}(f)$ and 
  $
    \mu(\varphi)+ \nu(\psi) = \sum_{k\geq0} \mu_{k}(\varphi_{k})+ \nu_{k}(\psi_{k}).
  $
  
  \item 
  Conversely, let $(\varphi,\psi,h)\in \cD^{c}_{\mu,\nu}(f)$. After changing $\varphi$ on a $\mu$-nullset and $\psi$ on a $\nu$-nullset, we have $(\varphi,\psi,h)\in \cD^{c,pw}_{\mu_{k},\nu_{k}}(f)$ for $k\geq 0$ and 
  $$
    \sum_{k\geq0} \{\mu_{k}(\varphi)+ \nu_{k}(\psi)\} = \mu(\varphi)+ \nu(\psi) <\infty.
  $$
  \end{enumerate}
\end{lemma}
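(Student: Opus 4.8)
The plan is to exploit the structure theorem for $\cM(\mu,\nu)$-polar sets (Theorem~\ref{th:polarDescr}) together with the decomposition of martingale transports into irreducible components (Proposition~\ref{pr:decomp}). The key point is that, modulo marginal-nullsets, a quasi-sure inequality on $\R^2$ is exactly a pointwise inequality on each rectangle $I_k\times J_k$ (and on the diagonal piece $\Delta_0$), because the complement of $\Delta\cup\bigcup_{k\geq1} I_k\times J_k$ is polar and therefore invisible in the quasi-sure formulation.

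For part (i), given the component data $(\varphi_k,\psi_k,h_k)$ normalized as in~\eqref{eq:psiNormalization}, I would first check that the glued functions $\varphi,\psi,h$ are well defined Borel functions: since the $I_k$ are pairwise disjoint, $\varphi$ and $h$ are unambiguous; the $J_k$ may overlap only at shared endpoints, but at such a point $\psi_k=0$ by~\eqref{eq:psiNormalization}, so the sum defining $\psi$ is unambiguous as well. Next I would verify the quasi-sure inequality: for any $P\in\cM(\mu,\nu)$, decompose $P=\sum_{k\geq0} P_k$ with $P_k\in\cM(\mu_k,\nu_k)$; then $P$-a.s. the pair $(X,Y)$ lies in $\bigcup_{k\geq1}(I_k\times J_k)\cup\Delta_0$, and on each $I_k\times J_k$ the inequality $\varphi(x)+\psi(y)+h(x)(y-x)\geq f(x,y)$ reduces to the pointwise inequality defining $\cD^{c,pw}_{\mu_k,\nu_k}(f)$ (for $k=0$ it is $\varphi_0(x)+\psi_0(x)=f(x,x)\geq f(x,x)$ on the diagonal). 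Finally, $(\varphi,\psi)\in L^c(\mu,\nu)$: restricting $\varphi$ to $I_k$ gives back $\varphi_k$ up to the marginal-nullset $I_k\cap I_j$, $j\neq k$ (which is at most two points and $\mu_k$-null since $\mu_k$ has no atoms at the endpoints of $I_k$ by Remark~\ref{rk:irredAtoms}), and similarly for $\psi$ restricted to $J_k$; hence $\mu_k(\varphi)+\nu_k(\psi)=\mu_k(\varphi_k)+\nu_k(\psi_k)$, and the hypothesis $\sum_{k\geq0}\{\mu(\varphi_k)+\nu(\psi_k)\}<\infty$ together with Definition~\ref{de:globalIntegrability} gives $(\varphi,\psi)\in L^c(\mu,\nu)$ with the claimed value of $\mu(\varphi)+\nu(\psi)$.

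For part (ii), start from $(\varphi,\psi,h)\in\cD^c_{\mu,\nu}(f)$, so the inequality holds $\cM(\mu,\nu)$-q.s. By Theorem~\ref{th:polarDescr} the exceptional set is contained in $(N_\mu\times\R)\cup(\R\times N_\nu)\cup(\Delta\cup\bigcup_{k\geq1}I_k\times J_k)^c$ for suitable nullsets $N_\mu,N_\nu$. Redefine $\varphi:=\infty$ on $N_\mu$ and $\psi:=\infty$ on $N_\nu$; this changes nothing in $L^1(\mu_k)$, $L^1(\nu_k)$ or in the generalized integrals, since $N_\mu$ is $\mu_k$-null and $N_\nu$ is $\nu_k$-null for each $k$ (the $\mu_k$ are mutually singular and dominated by $\mu$, similarly the $\nu_k$ by $\nu$). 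After this modification the inequality $\varphi(x)+\psi(y)+h(x)(y-x)\geq f(x,y)$ holds at every point of $\bigcup_{k\geq1}(I_k\times J_k)\cup\Delta$, hence in particular pointwise on each $I_k\times J_k$ and on $I_0\times I_0$ restricted to the diagonal; thus $(\varphi,\psi,h)\in\cD^{c,pw}_{\mu_k,\nu_k}(f)$ for every $k\geq0$. That $(\varphi,\psi)\in L^c(\mu_k,\nu_k)$ for each $k$ and that $\sum_{k\geq0}\{\mu_k(\varphi)+\nu_k(\psi)\}=\mu(\varphi)+\nu(\psi)<\infty$ is immediate from the definition of $L^c(\mu,\nu)$ in Definition~\ref{de:globalIntegrability}.

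I expect the main obstacle to be the bookkeeping around the shared endpoints of the intervals $J_k$: one must be careful that the normalization~\eqref{eq:psiNormalization} is exactly what makes the gluing of $\psi$ consistent and that no mass of $\nu$ sitting at such an endpoint is counted in two components when evaluating $\sum_k \nu_k(\psi)$. The other delicate point, purely technical, is justifying that overwriting $\varphi$ and $\psi$ by $+\infty$ on the marginal-nullsets $N_\mu,N_\nu$ leaves the generalized integral $(\mu-\nu)(\chi)$ and the value $\mu(\varphi)+\nu(\psi)$ unchanged — this uses that a concave moderator can be chosen unaffected by such modifications and that the $\mu_k$ (resp. $\nu_k$) are dominated by $\mu$ (resp. $\nu$).
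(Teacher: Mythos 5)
Your proposal is correct and follows essentially the same route as the paper: both parts are reduced to Proposition~\ref{pr:decomp} and Theorem~\ref{th:polarDescr}, with the gluing in (i) made consistent by the normalization~\eqref{eq:psiNormalization} and the modification in (ii) performed by setting $\varphi,\psi$ to $+\infty$ on marginal nullsets. The only detail worth adding is that in (i) each term $\mu_{k}(\varphi_{k})+\nu_{k}(\psi_{k})$ is nonnegative (by weak duality, since $f\geq0$), which is what turns the hypothesis $\sum_{k}\{\mu_k(\varphi_{k})+\nu_k(\psi_{k})\}<\infty$ into the absolute summability required by Definition~\ref{de:globalIntegrability}; also note the $I_{k}$ are pairwise disjoint, so the set $I_{k}\cap I_{j}$ you worry about is actually empty.
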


\begin{proof}
  In essence, this is a direct consequence of Proposition~\ref{pr:decomp} and Theorem~\ref{th:polarDescr}. For~(i), we note that $\mu(\varphi_{k})+ \nu(\psi_{k})\geq0$ for all $k$, so that the sum is always well defined. Regarding~(ii), let $B$ be the polar set of all $(x,y)$ such that  $\varphi(x)+\psi(y) + h(x)(y-x)< f(x,y)$; note that $B$ is Borel because all these functions are Borel. Then for each $k\geq 1$, the set $B\cap (I_{k} \times J_{k})$ is contained in a union $(N^{k}_{\mu}\times \R) \cup (\R \times N^{k}_{\nu})$, where $N^{k}_{\mu}$ is $\mu$-null and $N^{k}_{\nu}$ is $\nu$-null. We then set $\varphi=\infty$ on $\cup_{k\geq1} N^{k}_{\nu}$ as well as on the $\mu_{0}$-nullset $B\cap \Delta_{0}$. Proceeding analogously with $\psi$, we obtain the desired properties.
\end{proof}

\begin{remark}\label{rk:SameDualIfIrred}
  (i) Suppose that $\mu\leq_{c}\nu$ is irreducible. Then, Lemma~\ref{le:reductionToComponentsDual} implies that the pointwise and the quasi-sure formulation of the dual problem agree:
  $$
    \bI^{pw}_{\mu,\nu}(f)=\bI_{\mu,\nu}(f)
  $$
  if $f=0$ outside the domain $(I,J)$, and otherwise the difference is $P_{0}(f)$ due to our definitions. Without the irreducibility condition, the formulations may differ fundamentally; cf.\ Example~\ref{ex:dualityGap}.

  (ii) As a sanity check on our definitions, we note that
  $$
   \mu(\varphi)+\nu(\psi) = P[\varphi(X) + \psi(Y) + h(X)(Y-X)], \quad P\in\cM(\mu,\nu)
  $$
  whenever $(\varphi,\psi,h)\in \cD^{c}_{\mu,\nu}(f)$ for some $f\geq0$, as a consequence of Lemma~\ref{le:reductionToComponentsDual} and Remark~\ref{rk:integralOfSuperhedge}.
\end{remark}

We can now state our main duality result.

\begin{theorem}\label{th:dualityGlobal}
  Let $f: \R^{2}\to [0,\infty]$ be Borel and let $\mu\leq_{c}\nu$. Then
  $$
    \bS_{\mu,\nu}(f) = \bI_{\mu,\nu}(f) \in [0,\infty].
  $$
  If\, $\bI_{\mu,\nu}(f)<\infty$, there exists an optimizer $(\varphi,\psi,h)\in \cD^{c}_{\mu,\nu}(f)$ for $\bI_{\mu,\nu}(f)$.
\end{theorem}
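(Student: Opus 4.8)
The strategy is to reduce the global statement to the irreducible case treated in Theorem~\ref{th:dualityIrred}, by decomposing both the primal and the dual value along the irreducible components of Proposition~\ref{pr:decomp} and then applying the componentwise result term by term. Thus the proof is essentially an assembly of Sections~\ref{se:polarSets}--\ref{se:dualityOnIrred}; no new analytic input is needed.

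First I would establish the primal decomposition
$$
  \bS_{\mu,\nu}(f) = \sum_{k\geq0} \bS_{\mu_{k},\nu_{k}}(f),\qquad \bS_{\mu_{0},\nu_{0}}(f)=P_{0}(f).
$$
The inequality ``$\leq$'' is immediate: any $P\in\cM(\mu,\nu)$ decomposes as $P=\sum_{k}P_{k}$ with $P_{k}\in\cM(\mu_{k},\nu_{k})$ (Proposition~\ref{pr:decomp}), and since $f\geq0$ we get $P(f)=\sum_{k}P_{k}(f)\leq\sum_{k}\bS_{\mu_{k},\nu_{k}}(f)$. For ``$\geq$'', note that the family $(\mu_{k})_{k\geq0}$ is mutually singular, so conversely \emph{any} collection $P_{k}\in\cM(\mu_{k},\nu_{k})$ glues to $P:=\sum_{k}P_{k}\in\cM(\mu,\nu)$ with $P(f)=\sum_{k}P_{k}(f)$; a standard truncation argument—optimizing finitely many components at a time, resp.\ up to an error $\eps 2^{-k}$ on the $k$-th component when the total sum is finite—then yields the claim, also when the sum is infinite.

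Next I would identify the dual value with its componentwise sum,
$$
  \bI_{\mu,\nu}(f) = \sum_{k\geq0} \bI^{pw}_{\mu_{k},\nu_{k}}(f),
$$
which is essentially the content of Lemma~\ref{le:reductionToComponentsDual}. For ``$\geq$'', part~(ii) of that lemma turns any $(\varphi,\psi,h)\in\cD^{c}_{\mu,\nu}(f)$ (after a modification on nullsets) into an element of $\cD^{c,pw}_{\mu_{k},\nu_{k}}(f)$ for each $k$, with $\sum_{k}\{\mu_{k}(\varphi)+\nu_{k}(\psi)\}=\mu(\varphi)+\nu(\psi)$, and each summand is $\geq \bI^{pw}_{\mu_{k},\nu_{k}}(f)$. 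For ``$\leq$'' (nontrivial only when the right-hand side is finite) I would pick $(\varphi_{k},\psi_{k},h_{k})\in\cD^{c,pw}_{\mu_{k},\nu_{k}}(f)$ with $\mu_{k}(\varphi_{k})+\nu_{k}(\psi_{k})\leq\bI^{pw}_{\mu_{k},\nu_{k}}(f)+\eps 2^{-k}$, normalize them via~\eqref{eq:psiNormalization}, take $\varphi_{0}(x)=f(x,x)$ and $\psi_{0}=0$ for the diagonal component (optimal there, and finite precisely when $P_{0}(f)<\infty$), and glue everything using part~(i) of the lemma; here one uses that each component cost is nonnegative (Remark~\ref{rk:integralOfSuperhedge}), so the absolute-value summability in Definition~\ref{de:globalIntegrability} holds automatically.

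With these two identities the theorem is immediate. Since $f$ is Borel it is upper semianalytic, so Theorem~\ref{th:dualityIrred}(i) gives $\bS_{\mu_{k},\nu_{k}}(f)=\bI^{pw}_{\mu_{k},\nu_{k}}(f)$ for every $k\geq1$ (and trivially for $k=0$), whence $\bS_{\mu,\nu}(f)=\bI_{\mu,\nu}(f)$. If this common value is finite, then each $\bI^{pw}_{\mu_{k},\nu_{k}}(f)$ is finite, so Theorem~\ref{th:dualityIrred}(ii) yields a dual optimizer on each component for $k\geq1$ (and the explicit choice above works for $k=0$); gluing them by Lemma~\ref{le:reductionToComponentsDual}(i) produces $(\varphi,\psi,h)\in\cD^{c}_{\mu,\nu}(f)$ with $\mu(\varphi)+\nu(\psi)=\sum_{k}\bI^{pw}_{\mu_{k},\nu_{k}}(f)=\bI_{\mu,\nu}(f)$, a global optimizer. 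The only genuinely delicate points are the bookkeeping in the primal and dual decompositions—in particular the infinite-sum case, the special role of the index $k=0$, and the overlap of the intervals $J_{k}$ at shared endpoints, which is exactly why the normalization~\eqref{eq:psiNormalization} is imposed—rather than any substantial new argument.
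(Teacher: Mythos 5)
Your proposal is correct and follows essentially the same route as the paper: decompose into irreducible components via Proposition~\ref{pr:decomp} and Lemma~\ref{le:reductionToComponentsDual}, apply Theorem~\ref{th:dualityIrred} on each component, and glue the componentwise dual optimizers back together. The only (harmless) differences are organizational: you isolate the primal and dual decomposition identities as explicit intermediate steps (the paper asserts the primal one and records both in Remark~\ref{rk:dualityGlobal}), and you use $\eps 2^{-k}$-optimizers where the paper invokes exact attainment from Theorem~\ref{th:dualityIrred}(ii).
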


\begin{proof}
  We first show that $\bS_{\mu,\nu}(f) \leq \bI_{\mu,\nu}(f)$. To this end, we may assume that $\bI_{\mu,\nu}(f)<\infty$, so that there exists some $(\varphi,\psi,h)\in \cD^{c}_{\mu,\nu}(f)$. By Lemma~\ref{le:reductionToComponentsDual}, this induces $(\varphi,\psi,h)\in \cD^{c,pw}_{\mu_{k},\nu_{k}}(f)$, and so the duality result of Theorem~\ref{th:dualityIrred} yields that
  $$
    \bS_{\mu,\nu}(f) \leq \sum_{k\geq0} \bS_{\mu_{k},\nu_{k}}(f) \leq \sum_{k\geq0} \{\mu_{k}(\varphi) +\nu_{k}(\psi)\} =  \mu(\varphi) +\nu(\psi)<\infty.
  $$
  The claim follows as  $(\varphi,\psi,h)\in \cD^{c}_{\mu,\nu}(f)$ was arbitrary.
  
  Next, we prove that $\bS_{\mu,\nu}(f) \geq \bI_{\mu,\nu}(f)$, for which we may assume that $\bS_{\mu,\nu}(f)<\infty$. Then $\bS_{\mu_{k},\nu_{k}}(f)<\infty$ for all $k\geq0$ and by Theorem~\ref{th:dualityIrred} there exist $(\varphi_{k},\psi_{k},h_{k})\in \cD^{c,pw}_{\mu_{k},\nu_{k}}(f)$  such that 
  $$
    \bS_{\mu,\nu}(f) =\sum_{k\geq0} \bS_{\mu_{k},\nu_{k}}(f) = \sum_{k\geq0} \{\mu_{k}(\varphi_{k}) +\nu_{k}(\psi_{k})\}.
  $$
  With the induced $(\varphi,\psi,h)\in \cD^{c}_{\mu,\nu}(f)$ as in Lemma~\ref{le:reductionToComponentsDual}, it follows that 
  $$
    \bS_{\mu,\nu}(f) = \mu(\varphi) +\nu(\psi) \geq \bI_{\mu,\nu}(f) \geq \bS_{\mu,\nu}(f),
  $$
  which shows both the claimed inequality and  that $(\varphi,\psi,h)\in \cD^{c}_{\mu,\nu}(f)$ is optimal for $\bI_{\mu,\nu}(f)$.
\end{proof}

Some remarks on the main result are in order.

\begin{remark}\label{rk:lowerBound}
  The lower bound on $f$ in Theorem~\ref{th:dualityGlobal} can easily be relaxed. Indeed, let $f: \R^{2}\to \overline\R$ be Borel and suppose there exist Borel functions $(\varphi,\psi,h): \R\to\overline\R \times \overline\R \times \R$ such that $\varphi\in L^{1}(\mu)$, $\psi\in L^{1}(\nu)$ and
  $$
    f(X,Y) \geq \varphi(X)+\psi(Y) + h(X)(Y-X) \quad \cM(\mu,\nu)\qs
  $$
  Then, we may apply Theorem~\ref{th:dualityGlobal} to 
  $$
    \bar f := [f(X,Y) - \varphi(X)-\psi(Y) - h(X)(Y-X)]^{+}
  $$
  and the conclusion for $f$ follows, except that now $\bS_{\mu,\nu}(f) = \bI_{\mu,\nu}(f)$ has values in $(-\infty,\infty]$. However, the lower bound cannot be eliminated completely; cf.\ Example~\ref{ex:dualityGapLower}
\end{remark}

We recall that in general, the duality theorem can only hold with a relaxed notion of integrability; cf.\ Examples~\ref{ex:noIntegrability} and~\ref{ex:gapWithIntegrability}. We have the following sufficient condition for integrability in the classical sense.

\begin{remark}\label{rk:integraleDualSuffGLlobal}
  Suppose that for each $k\geq1$, either $\mu_{k}$ is supported on a compact subset of $I_{k}$ or
  $$
	  u_{\nu_{k}} - u_{\delta_{m_{k}}} \leq C_{k}(u_{\nu_{k}} - u_{\mu_{k}})   
  $$
  for some constant $C_{k}$, where $m_{k}$ is the barycenter of $\mu_{k}$. Then, 
  $$
    \cD^{c}_{\mu_{k},\nu_{k}}(f)=\cD^{1}_{\mu_{k},\nu_{k}}(f),\quad k\geq1
  $$
  and in particular the optimizer in Theorem~\ref{th:dualityGlobal} satisfies $\varphi\in L^{1}(\mu)$ and $\psi\in L^{1}(\nu)$. Indeed, Remark~\ref{rk:finiteIntegralSuffCond} shows that all concave moderators can be chosen as $\chi=0$ in this situation.
\end{remark}

\begin{remark}\label{rk:dualityGlobal}
  In the setting of Theorem~\ref{th:dualityGlobal} and the notation of Proposition~\ref{pr:decomp} and Lemma~\ref{le:reductionToComponentsDual}, the following relations hold.
  \begin{enumerate}
  \item
    We have $\bS_{\mu,\nu}(f) = \sum_{k\geq0} \bS_{\mu_{k},\nu_{k}}(f)$ and $\bI_{\mu,\nu}(f) = \sum_{k\geq0} \bI_{\mu_{k},\nu_{k}}(f)$.
  \item
    If $P_{k}\in\cM(\mu_{k},\nu_{k})$ is optimal for $\bS_{\mu_{k},\nu_{k}}(f)$ for all $k\geq0$, then $P\in\cM(\mu,\nu)$ is optimal for $\bS_{\mu,\nu}(f)$. If $\bS_{\mu,\nu}(f)<\infty$, the converse holds as well: if $P\in\cM(\mu,\nu)$ is optimal for $\bS_{\mu,\nu}(f)$, then  $P_{k}\in\cM(\mu_{k},\nu_{k})$ is optimal for $\bS_{\mu_{k},\nu_{k}}(f)$ for all $k\geq0$.
  
  \item
  If $(\varphi_{k},\psi_{k},h_{k})\in \cD^{c}_{\mu_{k},\nu_{k}}(f)$ is optimal for $\bI_{\mu_{k},\nu_{k}}(f)$ for all $k\geq0$, then $(\varphi,\psi,h)\in \cD^{c}_{\mu,\nu}(f)$ is optimal for $\bI_{\mu,\nu}(f)$. If $\bI_{\mu,\nu}(f)<\infty$, the converse holds as well.
  \end{enumerate}
\end{remark}

\subsection{Monotonicity Principle}

An important consequence of the duality is the subsequent monotonicity principle describing the support of optimal transports; its second part can be seen as a substitute for the cyclical monotonicity from classical transport theory. While similar results have been obtained in \cite[Lemma~1.11]{BeiglbockJuillet.12} and \cite[Theorem~3.6]{Zaev.14}, the present version is stronger in several ways. First, it is stated with a set $\Gamma$ that is universal; i.e., independent of the measure under consideration; second, we remove growth and integrability conditions on~$f$; and third, the reward function is measurable rather than continuous.

\begin{corollary}[Monotonicity Principle]\label{co:monotonicityPrinciple}
  Let $f: \R^{2}\to [0,\infty]$ be Borel, let $\mu\leq_{c}\nu$ be probability measures and suppose that $\bS_{\mu,\nu}(f)<\infty$. There exists a Borel set $\Gamma\subseteq \R^{2}$ with the following properties.
  \begin{enumerate}
  \item A measure $P\in\cM(\mu,\nu)$ is concentrated on $\Gamma$ if and only if it is optimal for $\bS_{\mu,\nu}(f)$.
  
  \item Let $\bar\mu\leq_{c}\bar\nu$ be  probabilities on $\R$. If $\bar P\in\cM(\bar\mu,\bar\nu)$ is concentrated on $\Gamma$, then $\bar P$ is optimal for $\bS_{\bar\mu, \bar\nu}(f)$. %
  \end{enumerate}
  
  If $(\varphi,\psi,h)\in \cD^{c}_{\mu,\nu}(f)$ is a suitable\footnote{chosen as in Lemma~\ref{le:reductionToComponentsDual}\,(ii)} version of the optimizer from Theorem~\ref{th:dualityGlobal}, then we can take the following set for $\Gamma$,
  $$
   \big\{(x,y)\in \R^{2}:\, \varphi(x)+\psi(y)+h(x)(y-x) = f(x,y)\big\} \cap \bigg(\Delta \cup \bigcup_{k\geq1} I_{k}\times J_{k}\bigg).
  $$
\end{corollary}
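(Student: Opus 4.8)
The plan is to construct $\Gamma$ directly from the optimizer and to deduce both parts from complementary slackness together with the description of polar sets in Theorem~\ref{th:polarDescr}. Fix the optimizer $(\varphi,\psi,h)$ of Theorem~\ref{th:dualityGlobal} in the version of Lemma~\ref{le:reductionToComponentsDual}\,(ii), so it is Borel with $(\varphi,\psi,h)\in\cD^{c,pw}_{\mu_k,\nu_k}(f)$ for all $k\ge0$; enlarging $\varphi$, $\psi$ to $+\infty$ on suitable null sets we may also assume $\varphi,\psi>-\infty$. Write $S(x,y):=\varphi(x)+\psi(y)+h(x)(y-x)$ and $E:=\Delta\cup\bigcup_{k\ge1}I_k\times J_k$; then $S\ge f$ holds pointwise on $E$, $\mu(\varphi)+\nu(\psi)=\bS_{\mu,\nu}(f)<\infty$ by Theorem~\ref{th:dualityGlobal} and Lemma~\ref{le:reductionToComponentsDual}, and $\Gamma:=\{S=f\}\cap E$ is Borel. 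For~(i), let $P\in\cM(\mu,\nu)$: Proposition~\ref{pr:decomp} gives $P(E)=1$, hence $S\ge f$ $P$-a.s., while $E^{P}[S]=\mu(\varphi)+\nu(\psi)=\bS_{\mu,\nu}(f)$ by Remark~\ref{rk:SameDualIfIrred}\,(ii). As $E^{P}[f]\le\bS_{\mu,\nu}(f)<\infty$, the nonnegative function $(S-f)\1_{E}$ is $P$-integrable with $E^{P}[(S-f)\1_{E}]=\bS_{\mu,\nu}(f)-E^{P}[f]$, so $P$ is optimal if and only if this quantity vanishes, i.e.\ if and only if $P$ is concentrated on $\{S=f\}\cap E=\Gamma$.

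For~(ii), let $\bar\mu\leq_{c}\bar\nu$ and let $\bar P\in\cM(\bar\mu,\bar\nu)$ be concentrated on $\Gamma\subseteq E$. Decompose $\bar P=\sum_{j\ge0}\bar P_j$ with $\bar P_j\in\cM(\bar\mu_j,\bar\nu_j)$ as in Proposition~\ref{pr:decomp}, where $(\bar I_j,\bar J_j)$ is the domain of the $j$-th component; since $\bar P_j=\bar P|_{\bar I_j\times\R}\le\bar P$, each $\bar P_j$ is again concentrated on $\Gamma$. The crux is to show $\bar E:=\Delta\cup\bigcup_{j\ge1}\bar I_j\times\bar J_j\subseteq E$. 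Because $\bar P$ charges only $E$ and $I_0$ meets no $I_k$, a trajectory $(X,Y)$ of $\bar P$ with $X\in I_0$ has $Y=X$, and one with $X\in I_k$ ($k\ge1$) has $Y\in J_k$. I claim no component interval $\bar I_j$ can contain a point $x_0\in I_0$: if it did, $\bar P_j$ would split into the diagonal over $\bar I_j\cap I_0$ (containing $x_0$) plus pieces carrying the $\bar\mu_j$-mass on $\{x<x_0\}$, resp.\ $\{x>x_0\}$, into $J_k$'s situated on the same side of $x_0$ (using $J_k\subseteq\overline{I_k}$ and that no $I_k$ straddles $x_0\notin\bigcup_k I_k$); each piece being a martingale, the pushed-forward measure has the same mass and barycenter as its source, hence the same potential value at $x_0$, and a term-by-term comparison gives $u_{\bar\mu_j}(x_0)=u_{\bar\nu_j}(x_0)$, contradicting $x_0\in\bar I_j=\{u_{\bar\mu_j}<u_{\bar\nu_j}\}$. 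Thus $\bar I_j\subseteq I_{k(j)}$ for a single $k(j)\ge1$; then $\bar P_j$ is concentrated on $I_{k(j)}\times J_{k(j)}$, so $\bar\nu_j$ and hence $\bar J_j$ lie in $J_{k(j)}$, and $\bar E\subseteq E$.

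Granting this, $S\ge f$ on $\bar E$, hence $\cM(\bar\mu,\bar\nu)$-q.s.\ by Theorem~\ref{th:polarDescr}. Suppose first $E^{\bar P}[f]<\infty$, so $E^{\bar P_j}[f]<\infty$ for every $j$. On each component $\bar P_j$ is concentrated on $\Gamma$, so $S=f$ $\bar P_j$-a.s.; since $f\ge0$ makes $S\ge0$ on $\bar I_j\times\bar J_j$, on which $\bar P_j$ is concentrated, we get $E^{\bar P_j}[S]=E^{\bar P_j}[f]<\infty$, so Remark~\ref{rk:reverseIntegrability} gives $(\varphi,\psi)\in L^{c}(\bar\mu_j,\bar\nu_j)$ and Remark~\ref{rk:integralOfSuperhedge} gives $\bar\mu_j(\varphi)+\bar\nu_j(\psi)=E^{\bar P_j}[f]$ (with the analogous, trivial statement for the diagonal component $\bar P_0$). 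Summing over $j$ and using $\sum_j E^{\bar P_j}[f]=E^{\bar P}[f]$, we obtain $(\varphi,\psi,h)\in\cD^{c}_{\bar\mu,\bar\nu}(f)$ with $\bar\mu(\varphi)+\bar\nu(\psi)=E^{\bar P}[f]$; the easy half of Theorem~\ref{th:dualityGlobal} then yields $\bS_{\bar\mu,\bar\nu}(f)\le\bI_{\bar\mu,\bar\nu}(f)\le E^{\bar P}[f]\le\bS_{\bar\mu,\bar\nu}(f)$, so $\bar P$ is optimal. If instead $E^{\bar P}[f]=\infty$, then $\bS_{\bar\mu,\bar\nu}(f)\ge E^{\bar P}[f]=\infty$ and $\bar P$ is again optimal. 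The genuinely delicate step is the inclusion $\bar E\subseteq E$---the assertion that a component of $(\bar\mu,\bar\nu)$ cannot protrude into the region $I_0$ on which $u_\mu$ and $u_\nu$ already agree---which rests on the potential-function bookkeeping sketched above (one must track carefully which side of $x_0$ the $\bar\mu$- and $\bar\nu$-mass lies on, and invoke that potential functions of measures in convex order coincide outside the convex hull of the larger support); everything else is routine given Theorems~\ref{th:polarDescr} and~\ref{th:dualityGlobal}.
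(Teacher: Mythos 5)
Your proposal is correct and follows essentially the same route as the paper: part~(i) is the complementary-slackness argument based on $E^{P}[\varphi(X)+\psi(Y)+h(X)(Y-X)]=\mu(\varphi)+\nu(\psi)$ (Remarks~\ref{rk:integralOfSuperhedge} and~\ref{rk:SameDualIfIrred}), and part~(ii) rests on showing that every barrier of $(\mu,\nu)$ remains a barrier of $(\bar\mu,\bar\nu)$ so that the pointwise inequality on $\Delta\cup\bigcup_{k}I_{k}\times J_{k}$ transfers quasi-surely, plus Remark~\ref{rk:reverseIntegrability} for the integrability. Your mass-and-barycenter bookkeeping for the barrier inclusion is a componentwise rephrasing of the paper's direct computation $E[|X-x_{0}|\1_{X\geq x_{0}}]=E[(Y-x_{0})\1_{X\geq x_{0}}]=E[|Y-x_{0}|\1_{X\geq x_{0}}]$, which uses the martingale property together with the fact that trajectories of $\bar P$ starting in $[x_{0},\infty)$ stay there.
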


\begin{proof}
  As $\bI_{\mu,\nu}(f)=\bS_{\mu,\nu}(f)<\infty$, Theorem~\ref{th:dualityGlobal} yields a dual optimizer $(\varphi,\psi,h)\in \cD^{c}_{\mu,\nu}(f)$ and we can define $\Gamma$ as above. 
  By Remark~\ref{rk:integralOfSuperhedge}, 
  \begin{equation}\label{eq:monPrincipleProof}
    P'(f)\leq P'[\varphi(X) + \psi(Y) + h(X)(Y-X)] = \mu(\varphi)+\nu(\psi)
  \end{equation}
  for all $P'\in\cM(\mu,\nu)$, whereas for $P\in\cM(\mu,\nu)$ with $P(\Gamma)=1$, the same holds with equality. This shows that $P(f)=\bS_{\mu,\nu}(f)$. For the converse in~(i), we observe that the inequality in~\eqref{eq:monPrincipleProof} is strict if $P'(\Gamma)<1$, and then $\bS_{\mu,\nu}(f) = \mu(\varphi)+\nu(\psi)$ shows that~$P'$ cannot be a maximizer.
  
  For the proof of~(ii), we choose a version of $(\varphi,\psi,h)\in \cD^{c}_{\mu,\nu}(f)$ as in Lemma~\ref{le:reductionToComponentsDual}\,(ii); moreover, we may assume that $\bar P(f)<\infty$. We shall show that $(\varphi,\psi,h)\in \cD^{c}_{\bar\mu,\bar\nu}(f)$; once this is established, the proof of optimality is the same as above.
  
  (a) On the one hand, we need to show that
  \begin{equation}\label{eq:monPrinciplePolarSets}
    \varphi(X) + \psi(Y) + h(X)(Y-X) \geq f(X,Y) \quad \cM(\bar\mu,\bar\nu)\qs
  \end{equation}   
  For this, it suffices to prove that the domains of the irreducible components of $\bar\mu\leq_{c}\bar\nu$ are subsets of the ones of $\mu\leq_{c}\nu$; i.e., that $u_{\mu}(x)=u_{\nu}(x)$ implies $u_{\bar\mu}(x)=u_{\bar\nu}(x)$, for any $x\in\R$. Indeed, let $u_{\mu}(x)=u_{\nu}(x)$. Since $\bar P$ is concentrated on $\Gamma \subseteq \Delta \cup \bigcup_{k\geq1} I_{k}\times J_{k}$, we know that $Y\geq x$ $\bar P$-a.s.\ on the set $\{X\geq x\}$. Writing $E[\,\cdot\,]$ for the expectation under $\bar P$, it follows that
  $$
    E[|X-x|\1_{X\geq x}] = E[(X-x)\1_{X\geq x}]=E[(Y-x)\1_{X\geq x}]= E[|Y-x|\1_{X\geq x}],
  $$
  where we have used that $E[Y|X]=X$ $\bar P$-a.s. An analogous identity holds for $\{X\leq x\}$, and thus
  $$
    u_{\bar\mu}(x)= E[|X-x|] = E[|Y-x|]=u_{\bar\nu}(x)
  $$
  as desired.
  
  (b) On the other hand, we need to show that $(\varphi,\psi)\in L^{c}(\bar\mu,\bar\nu)$. 
  By reducing to the components, we may assume without loss of generality that~$(\bar\mu,\bar\nu)$ is irreducible with domain $(I,J)$. As $(\varphi,\psi,h)\in \cD^{c,pw}_{\mu,\nu}(f)$ and $\bar P(\Gamma)=1$, we have
  $
    \bar P[ \varphi(X) +  \psi(Y)  + h(X)(Y-X)] = \bar P (f)<\infty,
  $
  and now Remark~\ref{rk:reverseIntegrability} implies that $(\varphi,\psi)\in L^{c}(\bar\mu,\bar\nu)$ as desired.
\end{proof}

We note that the dual optimizer $(\varphi,\psi,h)$ need not be unique, and a different choice may lead to a different set $\Gamma$. Moreover, we observe that an optimal $P\in\cM(\mu,\nu)$ need not exist. However, the following yields a fairly general sufficient criterion in the spirit of \cite{BeiglbockPratelli.12}.

\begin{remark}\label{rk:primalExistence}
  Let $f: \R^{2}\to [0,\infty]$ be Borel, let $\mu\leq_{c}\nu$ be probability measures and suppose that $\bS_{\mu,\nu}(f)<\infty$. Suppose there exist a Polish topology $\tau$ on $\R$ and a function $\bar{f}: \R^{2}\to [0,\infty]$ such that $\bar{f}$ is upper semicontinuous for $\tau\otimes\tau$ and $\bar f = f$ $\cM(\mu,\nu)$-q.s. Then, there exists an optimal $P\in\cM(\mu,\nu)$ for $\bS_{\mu,\nu}(f)$.
  
  Indeed, the induced weak topology on $\cM(\mu,\nu)$ does not depend on the choice of $\tau$; cf.\ \cite[Lemma~2.3]{BeiglbockPratelli.12}. Thus, under the stated conditions, the mapping $P\mapsto P(f)$ is upper semicontinuous on the compact set $\cM(\mu,\nu)$, and the result follows. We remark that compactness need not hold if non-product topologies are considered on $\R^{2}$, hence the use of $\tau\otimes\tau$. 
  
  The flexibility of choosing $\tau$ allows us to include a broad class of functions. Consider for instance $f$ of the product form $f(x,y)=f_{1}(x)f_{2}(y)$, where $f_{1}$ and $f_{2}$ are Borel measurable, or more generally any continuous function of $f_{1}(x)$ and $f_{2}(y)$. Then, we can choose $\tau$ such as to make $f$ continuous (cf.\ the proof of \cite[Theorem~1]{BeiglbockPratelli.12}) and the above applies.
\end{remark}

\begin{remark}\label{rk:KantorovichPotential}
   Corollary~\ref{co:monotonicityPrinciple} is a version of the classical ``Fundamental Theorem of Optimal Transport,'' see e.g.\ \cite[Theorem 2.13]{AmbrosioGigli.13}, where $\Gamma$ is the graph of the $c$-super\-differential of a $c$-concave function, the so-called Kantorovich potential (here $c=-f$ is the cost function). In our context, the roles of $\varphi$ and $\psi$ are not symmetric, and it is $\psi$ that constitutes the analogue of the Kantorovich potential. Indeed, $\varphi$ and $h$ can easily be obtained from $\psi$ by taking a concave envelope and its derivative, respectively; see the end of the proof of Proposition~\ref{pr:closednessIrred}.
\end{remark}

\section{Counterexamples}\label{se:counterexamples}

In this section, we present five counterexamples. Examples~\ref{ex:dualityGap} and~\ref{ex:noAttainment} show that the duality theory fails in the pointwise formulation; i.e.,
$$
  \varphi(x) + \psi(y) + h(x)(y-x) \geq f(x,y) \quad \mbox{for \emph{all}}\quad (x,y)\in \R^{2},
$$
and thus justify our quasi-sure approach. The subsequent two examples demonstrate that a relaxed notion of integrability is necessary for the dual elements, and the final example shows that duality fails if $f$ does not have any lower bound.

Our first example shows that a duality gap may occur with the pointwise formulation of the dual problem.

\pagebreak[4]

\begin{example}[Duality Gap in Pointwise Formulation]\label{ex:dualityGap}
  We exhibit a situation where
  \begin{enumerate}
   \item the reward function $f$ is bounded;
   \item a primal optimizer exists;
   \item if the dual problem is formulated in the pointwise sense, dual optimizers exist but there is a duality gap.
  \end{enumerate}
  
  Indeed, let $\mu$ be the restriction of the Lebesgue measure $\lambda$ to $[0,1]$. Setting $\nu=\mu$, the set $\cM(\mu,\nu)$ has a unique element, the law $P_{0}$ of $x\mapsto (x,x)$ under $\mu$, which is nothing but the uniform distribution on the diagonal of the unit square $[0,1]^{2}$. Consider the bounded reward function 
  $f(x,y):= \1_{x\neq y}$ which is lower (but not upper) semicontinuous. Since $P_{0}$ is concentrated on the diagonal, the primal value of the problem is
  $$
    \sup_{P\in\cM(\mu,\nu)} E^{P}[f]=E^{P_{0}}[f]=0.
  $$
  Now let $\varphi,\psi,h$ be Borel functions such that
  $$
    \varphi(x) + \psi(y) + h(x)(y-x) \geq f(x,y) \quad \mbox{for all}\quad x,y\in [0,1];
  $$
  then in particular
  $$
    \varphi(x) + \psi(y) + h(x)(y-x) \geq 1 \quad \mbox{for all}\quad x\neq y\in [0,1];
  $$
  Let $\eps>0$. By Lusin's theorem, there exists a Borel set $A\subseteq [0,1]$ with $\lambda(A)>1-\eps$ such that the restriction $\psi|_{A}$ is continuous. Using another fact from measure theory \cite[Exercise 1.12.63, p.\,85]{Bogachev.07volI}, the set $A$ can be chosen to be perfect; i.e., every point in $A$ is a limit point of $A$. Now let $x\in A$ and let $x_{n}\in A$ be a sequence of distinct points such that $x_{n}\to x$. Then passing to the limit in
  $$
    \varphi(x) + \psi(x_{n}) + h(x)(x_{n}-x) \geq 1 
  $$
  yields that 
  $$
    \varphi(x) + \psi(x) \geq 1 \quad \mbox{for all}\quad x\in A.
  $$
  As $\eps>0$ was arbitrary, it follows that $\lambda\{x\in[0,1]:\, \varphi(x) + \psi(x) \geq 1\}=1$. In particular, $\mu(\varphi)+\nu(\psi)\geq1$. This bound is attained, for instance, by the triplet $\varphi=1$, $\psi=0$, $h=0$, so that the dual problem in the pointwise formulation admits an optimizer and has value $1$; in particular, there is a duality gap in the pointwise formulation.
\end{example}

The next example shows that in general, the pointwise formulation fails to admit a dual optimizer. Such an example was already presented in~\cite{BeiglbockHenryLaborderePenkner.11}, using marginals with infinitely many irreducible components. The subsequence example shows that existence may fail even with finitely many (two) components and in a reasonably generic setting.

\begin{example}[No Dual Attainment in the Pointwise Formulation]\label{ex:noAttainment}
  We describe a setting where
  \begin{enumerate}
   \item the reward function is continuous and the marginals are compactly supported (but not irreducible);
   \item there is no duality gap for either formulation of the dual problem;
   \item there is no optimizer for the pointwise formulation of the dual problem.
  \end{enumerate}
  
  We fix two measures $\mu\leq_{c}\nu$ supported on $(-1,1)$ such that there are two irreducible components with domains $I_{1}\times J_{1}=(-1,0)^{2}$ and $I_{2}\times J_{2}=(0,1)^{2}$. Moreover, we assume that the origin is in the (topological) supports of $\mu$ and $\nu$;
  for instance, $\mu$ and $\nu$ could both be equivalent to the Lebesgue measure on~$(-1,1)$, or they could be discrete with atoms accumulating at the origin. The reward function $f$ is any continuous function of linear growth such that 
  $$
     f = 0\quad \mbox{on}\quad (-1,0)^{2}  \cup (0,1)^{2} \quad \mbox{and}
  $$ 
  \begin{center}
    $f$~~is not $(\mu\times \nu)$-a.s.\ bounded from above by a linear function on $(-1,0)\times (0,1)$.
  \end{center}
  An example is $f(x,y)=\sqrt{|xy|}\1_{(-1,0)\times (0,1)}$.
  
  Suppose for contradiction that $(\varphi,\psi,h)$ is a dual minimizer for the pointwise formulation; then
  $$
    \varphi(x)+\psi(y) + h(x)(y-x) \geq0,\quad (x,y)\in (-1,0)^{2}  \cup (0,1)^{2}.
  $$
  We have $\bS_{\mu,\nu}(f)=0$ and as $f$ is continuous with linear growth, there is no duality gap (even for the pointwise formulation); cf.\ \cite[Corollary~1.1]{BeiglbockHenryLaborderePenkner.11}. It follows that $P[\varphi(X)+\psi(Y) + h(X)(Y-X)]=0$ for all $P\in\cM(\mu,\nu)$ and thus
  $$
    \varphi(X)+\psi(Y) + h(X)(Y-X) =0\quad \cM(\mu,\nu)\qs
  $$  
  Let $N_{\mu}$ and $N_{\nu}$ be the corresponding nullsets as in Theorem~\ref{th:polarDescr} and write $I_{\mu}$ for $I\setminus N_{\mu}$ whenever $I$ is an interval. Then
  $$
    \varphi(x)+\psi(y) + h(x)(y-x) =0,\quad \! (x,y)\in [(-1,0)_{\mu}\times (-1,0)_{\nu}]\, \cup\, [(0,1)_{\mu}\times (0,1)_{\nu}]
  $$
  and in particular, fixing an arbitrary $x_{0}\in (0,1)_{\mu}$ yields
  $$
    \psi(y) = - \varphi(x_{0}) - h(x_{0})(y-x_{0}), \quad y\in (0,1)_{\nu},
  $$
  so that $\psi$ must be an affine function $\psi(y)=a_{+}y+d_{+}$ on $(0,1)_{\nu}$. It then follows that $h=-a_{+}$ on $(0,1)_{\mu}$ and $\varphi(x)=-a_{+}x-d_{+}$ on $(0,1)_{\mu}$, and a similar argument gives rise to constants $a_{-}, d_{-}$ for $(-1,0)$. Now, spelling out the condition
  $$
    \varphi(x)+\psi(y) + h(x)(y-x) \geq f(x,y)
  $$
  yields
  $$
  (a_{-}-a_{+}) y + (d_{-}-d_{+}) \geq f(x,y),\quad (x,y)\in (0,1)_{\mu} \times (-1,0)_{\nu},
  $$
  $$
  (a_{+}-a_{-}) y + (d_{+}-d_{-}) \geq f(x,y),\quad (x,y)\in (-1,0)_{\mu} \times (0,1)_{\nu}.
  $$
  Since $f(0,0)=0$ and $0$ is an accumulation point of  the intervals appearing on the right-hand side, it follows that $d_{-}=d_{+}$, but then it follows that $f$ is $(\mu\times \nu)$-a.s.\ bounded from above by a linear function on $(-1,0)_{\mu} \times (0,1)_{\nu}$, and all the same for $(0,1)_{\mu} \times (-1,0)_{\nu}$. This is the desired contradiction.
\end{example}

\begin{remark}\label{rk:nonExistenceGeneric}
  Nothing essential changes in Example~\ref{ex:noAttainment} if $\nu$ has one or more atoms at the boundary of the intervals $I_{k}$. As a matter of fact, the example suggests that one can expect non-existence for the pointwise formulation as soon as there are at least two adjacent irreducible components, the reward function is not Lipschitz where they touch, and the marginals exhibit some richness (in particular, have infinite support). 
\end{remark}

The next two examples concern the quasi-sure version of the dual problem; i.e., the setting of the main part of the present paper, and in particular the notion of integral introduced in Section~\ref{se:generalizedIntegral}. The first one shows that it is necessary to relax the notion of integrability in order to have existence for the dual problem $\bI_{\mu,\nu}$.

\begin{example}[Failure of Integrability for Optimizers]\label{ex:noIntegrability}
  We exhibit a situation where
  \begin{enumerate}
  \item the reward function $f$ is bounded;
  \item primal and dual optimizers exist and there is no duality gap;
  \item whenever $(\varphi,\psi,h)\in\cD^{c}_{\mu,\nu}(f)$ is a dual optimizer, $\varphi$ is not $\mu$-integrable and $\psi$ is not $\nu$-integrable. 
  \end{enumerate}
 
  Indeed, let $(c_{i})_{i\geq1}$ be a sequence of strictly positive numbers satisfying $\sum_{i}c_{i}=1$  such that the probability measure
  $$
    \mu := \sum_{i\geq1} c_{i} \delta_{i}
  $$
  has finite first moment but infinite second moment. Moreover, set
  $$
    \nu := \frac13 \sum_{i\geq1} c_{i} (\delta_{i-1} + \delta_{i} + \delta_{i+1})
  $$
  and note that the moments of $\nu$ then have the same property. Finally, our reward function is given by
  $
    f(x,y)= \1_{x\neq y}.
  $
  
  We observe that 
  $$
     P :=  \sum_{i\geq1} c_{i}\, \delta_{i} \otimes \frac13(\delta_{i-1} + \delta_{i} + \delta_{i+1}) \,\in \cM(\mu,\nu);
  $$
  in particular, $\mu\leq_{c}\nu$.  Moreover, let $\varphi(x)=-x^{2}$, $\psi(y)=y^{2}$ and $h(x)=-2x$; then we have
  $$
    \varphi(x) + \psi(y) + h(x)(y-x) = -x^{2} + y^{2} - 2x(y-x) = (x-y)^{2}\geq f(x,y)
  $$
  for all $(x,y)\in \N\times\N_{0}$, with equality holding on the set 
  $$
    \Gamma := \big\{(x,y)\in \N\times\N_{0}:\, y\in \{x-1,x,x+1\}\big\}.
  $$
  Since $P$ is concentrated on $\Gamma$, it follows as in Corollary~\ref{co:monotonicityPrinciple} that $P\in\cM(\mu,\nu)$ is a primal optimizer and $(\varphi,\psi,h)\in \cD^{c}_{\mu,\nu}(f)$ is a dual optimizer. One can observe that a concave moderator is given by $\chi(y)=-y^{2}$. 
  
  Now let $(\varphi,\psi,h)\in \cD^{c}_{\mu,\nu}(f)$ be an arbitrary optimizer; then we must have
  $$
    \varphi(x) + \psi(y) + h(x)(y-x) = f(x,y) \quad P\as
  $$
  and hence, by the definition of $P$, this equality holds for all $(x,y)\in\Gamma$. It follows that
  $$
 \mbox{for all }x\in\N,\quad\begin{cases}
    \varphi(x) + \psi(x-1) - h(x) =1, \\
    \varphi(x) + \psi(x+1) + h(x) =1, \\
    \varphi(x) + \psi(x) = 0.
  \end{cases}
  $$
  In particular, $\varphi = - \psi$ on $\N$ and
  $$
    2\varphi(x) -\varphi(x-1) - \varphi(x+1) =2, \quad x\in\N.
  $$
  All solutions of this difference equation satisfy 
  $$
    \varphi(x)=-x^{2}+bx+c, \quad x\in\N,
  $$
  for some constants $b,c\in\R$. In particular, $\varphi^{-}$ is not $\mu$-integrable and $\psi^{+}$ is not $\nu$-integrable, and as a result, there exists no optimizer for $\bI_{\mu,\nu}(f)$ in the class $\cD^{1}_{\mu,\nu}(f)\subset\cD^{c}_{\mu,\nu}(f)$.
\end{example}

The next example shows that without a relaxed notion of integrability, the dual problem may be infinite even if the primal problem $\bS_{\mu,\nu}$ is finite.

\begin{example}[Integrability Requirement Causes Duality Gap]\label{ex:gapWithIntegrability}
  We exhibit a situation where
  \begin{enumerate}
  \item the reward function $f$ is continuous;
  \item primal and dual problem are finite;
  \item the set $\cD^{1}_{\mu,\nu}(f)$ is empty; in particular, there is a duality gap if $\cD^{c}_{\mu,\nu}(f)$ is replaced by $\cD^{1}_{\mu,\nu}(f)$ in the definition of the dual problem $\bI_{\mu,\nu}$.
  \end{enumerate}
  
  Let $\mu\leq_{c}\nu$ be as in Example~\ref{ex:noIntegrability}; we now make the specific choice
  $$
   c_{i} = i^{-3}C,\quad i\in\N,
  $$
  where $C$ is the normalizing constant.
  This ensures that $\mu$ and $\nu$ have a first but no second moment. Moreover, the strict concavity of $i\mapsto i^{-3}$ implies that 
  $$
    \mu(\{i\}) > \nu(\{i\}), \quad i\in\N.
  $$
  The associated potential functions satisfy $u_{\mu}=u_{\nu}$ on $(-\infty,0]$. If there were $x>0$ with $u_{\mu}(x)=u_{\nu}(x)$, then as $\mu$ is the second (distributional) derivative of $u_{\mu}/2$,  we would have $\nu(\{x\})>\mu(\{x\})$, a contradiction. As a result,
  \begin{center}
  $\mu\leq_{c}\nu$ is irreducible with domain $(I,J)$ given by $I=(0,\infty)$, $J=[0,\infty)$.
  \end{center}
  For the reward function, we now consider 
  $$
    f(x,y)=(x-y)^{2}.
  $$
  As seen in in Example~\ref{ex:noIntegrability}, setting $\varphi(x)=-x^{2}$, $\psi(y)=y^{2}$ and $h(x)=-2x$  yields $(\varphi,\psi,h)\in \cD^{c}_{\mu,\nu}(f)$ with concave moderator $\chi(y)=-y^{2}$; in fact, $ \mu(\varphi)+\nu(\psi) = P(\1_{x\neq y})\leq 1$ in the notation of Example~\ref{ex:noIntegrability}, and thus $\bS_{\mu,\nu}(f)\leq1$. 
  
  Suppose that there exists some $(\varphi,\psi,h)\in \cD^{1}_{\mu,\nu}(f)$. Since $\mu\leq_{c}\nu$ is irreducible, Corollary~\ref{co:polarSetsIrred} shows that every point in $\N\times\N_{0}$ is charged by some element of $\cM(\mu,\nu)$ and hence
  $$
  \varphi(x) + \psi(y) +h(x)(y-x) \geq f(x,y)=x^{2}+y^{2} -2xy \quad\mbox{for all}\quad (x,y)\in \N\times\N_{0}.
  $$
  We see that $\psi$ must have at least quadratic growth in $y$, and thus $\psi\notin L^{1}(\nu)$ and $\varphi\notin L^{1}(\nu)$. As a result, $\cD^{1}_{\mu,\nu}(f)=\emptyset$ and the corresponding dual problem has infinite value, whereas the primal one satisfies $0\leq \bS_{\mu,\nu}(f)\leq1$.
\end{example}

Our last example shows that a duality gap may occur (even in the quasi-sure formulation) if $f$ does not have any lower bound. This should be compared with \cite[Theorem~1]{BeiglbockHenryLaborderePenkner.11} which shows that there is no duality gap if $f$ is upper semicontinuous with values in $[-\infty,\infty)$.

\begin{example}[Duality Gap Without Lower Bound]\label{ex:dualityGapLower}
  We exhibit a situation where
  \begin{enumerate}
   \item the reward function $f$ takes values in $[-\infty,0]$;
   \item primal and dual optimizers exist;
   \item there is a duality gap.
  \end{enumerate}
  
  Indeed, let $\mu=\lambda|_{[0,1]}$ be the restriction of the Lebesgue measure to $[0,1]$, fix a constant $\Delta>0$ and 
  $$
    \nu = \frac{1}{2} \Big( \lambda|_{[-\Delta,1-\Delta]}+ \lambda|_{[\Delta,1+\Delta]}\Big).
  $$
  Then $\mu\leq_{c}\nu$ is irreducible with domain given by $I=J=(-\Delta,1+\Delta)$. Indeed, a particular element of $\cM(\mu,\nu)$ is given by $P=\mu\otimes\kappa$, where
  $$
  \kappa(x) = \frac{1}{2} \big( \delta_{x-\Delta}+ \delta_{x+\Delta}\big).
  $$
  For the reward function, we choose
  $$
    f(x,y)=\begin{cases}
    0 & \mbox{if } |x-y|<\Delta,\\
    -1 & \mbox{if } |x-y|=\Delta,\\
    -\infty & \mbox{if } |x-y|>\Delta.
    \end{cases}
  $$
  
  We first analyze the primal problem. Let $P'\in\cM(\mu,\nu)$ and let 
  $P'=\mu\otimes \kappa'$ be a disintegration. We observe that
  $$
    \int (x-y)^{2}\, \kappa'(x,dy)\, \mu(dx) = \Var(\nu) - \Var(\mu) = \Delta^{2}. 
  $$ 
  If $P'(f)>-\infty$, then $\kappa'(x)\{|x-y|>\Delta\}=0$ for $\mu$-a.e.\ $x$ and the above implies that $|x-Y|=\Delta$ for $\mu$-a.e.\ $x$ and therefore $P'=P$. 
  As a result, $P'(f)=-\infty$ for all $P\neq P'\in\cM(\mu,\nu)$ and
  $$
    \sup_{P'\in\cM(\mu,\nu)} E^{P'}[f]=E^{P}[f]=-1.
  $$
  
  We now turn to the dual problem; since $\mu\leq_{c}\nu$ is irreducible, the quasi-sure formulation is equivalent to the pointwise one. Let $\varphi,\psi,h$ be Borel functions such that
  $$
    \varphi(x) + \psi(y) + h(x)(y-x) \geq f(x,y) \quad \mbox{for all}\quad (x,y)\in I\times J;
  $$
  then in particular
  \begin{align*}
    \varphi(x) + \psi(x+\delta) + h(x)\delta \geq 0 \quad \mbox{for all}\quad x\in(0,1),\quad \delta\in [0,\Delta),\\
    \varphi(x) + \psi(x-\delta) - h(x)\delta \geq 0 \quad \mbox{for all}\quad x\in(0,1),\quad \delta\in [0,\Delta).
  \end{align*}
  Adding these two inequalities yields
  $$
    \varphi(x) + \frac{\psi(x-\delta)+\psi(x+\delta)}{2} \geq 0 \quad \mbox{for all}\quad x\in(0,1),\quad \delta\in [0,\Delta).
  $$
  Let $\eps>0$. As in Example~\ref{ex:dualityGap}, Lusin's theorem can be used to find a set $A\subseteq (0,1)$ with $\lambda (A)>1-\eps$ such that for all $x\in A$ there exists a sequence $\delta_{n}=\delta_{n}(x)$ with $\psi(x\pm\delta_{n})\to \psi(x\pm \Delta)$. Thus, passing to the limit in the above inequality shows that 
  $$
    \varphi(x) + \frac{\psi(x-\Delta)+\psi(x+\Delta)}{2} \geq 0 \quad \mbox{for all}\quad x\in A,
  $$
  and as $\eps>0$ was arbitrary, the inequality holds $\mu$-a.e.
  But then
  $$
    \mu(\varphi)+\nu(\psi) = P[\varphi(X)+\psi(Y)] = \int \varphi(x) + \frac{\psi(x-\Delta)+\psi(x+\Delta)}{2}\,\mu(dx)\geq0. 
  $$
  As a result, the dual value is zero and a dual optimizer is given for instance by $\varphi=\psi=h=0$.
\end{example}

\newcommand{\dummy}[1]{}

\end{document}